\numberwithin{equation}{section}
\let\al=\alpha
\let\b=\beta
\let\g=\gamma
\let\f=\frac
\let\om=\omega
\let\G= \Gamma
\let\Om=\Omega
\let\na=\nabla
\let\th=\theta
\let\pa=\partial
\def\R{\mathbf R}
\def\tphi{\widetilde{\phi}}
\def\no{\noindent}
\def\eqdef{\buildrel\hbox{\footnotesize def}\over =}
\def\ef{\hphantom{MM}\hfill\llap{$\square$}\goodbreak}
\def\bbT{\mathbb{T}}
\newcommand{\beq}{\begin{equation}}
\newcommand{\eeq}{\end{equation}}
\newcommand{\ben}{\begin{eqnarray}}
\newcommand{\een}{\end{eqnarray}}
\newcommand{\beno}{\begin{eqnarray*}}
\newcommand{\eeno}{\end{eqnarray*}}
\newtheorem{theorem}{Theorem}[section]
\newtheorem{definition}[theorem]{Definition}
\newtheorem{lemma}[theorem]{Lemma}
\newtheorem{proposition}[theorem]{Proposition}
\newtheorem{remark}[theorem]{Remark}
\begin{document}

\title{Linear inviscid damping for a class of monotone shear flow in Sobolev spaces}

\author{Dongyi Wei}
\address{School of Mathematical Science, Peking University, 100871, Beijing, P. R. China}
\email{jnwdyi@163.com}

\author{Zhifei Zhang}
\address{School of Mathematical Science, Peking University, 100871, Beijing, P. R. China}
\email{zfzhang@math.pku.edu.cn}

\author{Weiren Zhao}
\address{Department of Mathematics, Zhejiang University, 310027, Hangzhou, P. R. China}
\email{zjzjzwr@126.com}
\date{\today}

\maketitle

\begin{abstract}
In this paper, we prove the decay estimates of the velocity and $H^1$ scattering for the 2D linearized Euler equations around a class of
 monotone shear flow in a finite channel. Our result is consistent with the decay rate predicted by Case in 1960.
\end{abstract}

\section{introduction}
In this paper, we consider the 2D incompressible Euler equations in a finite channel $\big\{(x,y): x\in \bbT, y\in [0,1]\big\}$:
\beq
\label{eq:Euler}
\left\{
\begin{array}{l}
\pa_tV+V\cdot\nabla V+\nabla P=0,\\
\na\cdot V=0,\\
V^2(t,x,0)=V^2(t,x,1)=0,\\
V|_{t=0}=V_0(x,y).
\end{array}\right.
\eeq
where $V=(V^1,V^2)$ and $P$ denote the velocity and the pressure of the fluid respectively.
Let $\omega=\pa_xV^2-\pa_yV^1$ be the vorticity, which satisfies
\beq
\label{equ:vorticity}
\omega_t+V\cdot\nabla \omega=0.
\eeq

It is well-known that the 2D incompressible Euler equations are globally well-posed for smooth data \cite{Che, Maj}. However, the long time behaviour
of the solution is widely open. We refer to \cite{Kis, Den} for recent relevance results.

We are concerned with the asymptotic stability of the 2D linearized Euler equations around the shear flow
$(u(y),0)$, which is a steady solution of 2D Euler equations.
The linearized Euler equations around a shear flow $(u(y),0)$ take
\ben\label{eq:Euler-L}
\left\{
\begin{array}{l}
\pa_t\omega+\mathcal{L}\omega=0,\\
\om|_{t=0}=\om_0(x,y),
\end{array}\right.
\een
where $\mathcal{L}=u(y)\pa_x+u''(y)\pa_x(-\Delta)^{-1}$.

The stability of 2-D Euler equations is a very active field in Physics and Mathematics \cite{Dra}, especially for shear flows \cite{Sch}.
Rayleigh's inflection point theorem gives a necessary condition for linear stability of shear flow: $u(y)$ has no infection points \cite{Ray}.
Arnold's theorem gives a sufficient condition for nonlinear Liapunov stability of shear flow \cite{Arnold}:
\beno
0<c_1\le \f {u(y)} {u''(y)}\le c_2<+\infty.
\eeno
Lin \cite{Lin-SIAM} provided a large classes of unstable shear flows.
We refer to \cite{Bar, Lin-IN, Gre, Vish} and references therein for nonlinear instability.

In 1907, Orr \cite{Orr} observed that the velocity tends to zero as $t\rightarrow +\infty$ for the linearized Euler equations around
Couette flow $(y,0)$. In this case, the linearized vorticity equation becomes
\beno
\om_t+y\pa_x\om=0.
\eeno
Thus, $\om(t,x,y)=\om_0(x-ty,y)$. Especially, in the case of the infinite channel $\bbT\times \R$, the velocity can be explicitly solved.
Indeed, let $\psi$ be the stream function, i.e., $(V^1,V^2)=(\pa_y\psi,-\pa_x\psi)$. Hence,
\ben\label{eq:stream}
-\Delta\psi=\om.
\een
Since  we deduce by taking Fourier transform to (\ref{eq:stream}) that
\beno
\widehat{\psi}(t,\al,\xi)=\f {\widehat{\om}_0(\al,\xi+\al t)} {\al^2+|\xi|^2},
\eeno
which implies that $V^1$ decays at $t^{-1}$, while $V^2$ decays at $t^{-2}$. In the case of finite channel, Case \cite{Case} gave a
formal proof of $t^{-1}$ decay for the velocity. Recently, Lin and Zeng \cite{LZ} present the optimal linear decay estimates of the velocity
for the data in Sobolev space. More precisely, if $\int_{\bbT}\om_0(x,y)dx=0$, then it holds that
\begin{itemize}
\item[1.] if $\om_0(x,y)\in H^{-1}_xH^1_y$, then
\beno
\|V(t)\|_{L^2}=O\big(\f 1 t\big),
\eeno
\item[2.] if $\om_0(x,y)\in H^{-1}_xH^2_y$, then
\beno
\|V^2(t)\|_{L^2}=O\big(\f 1 {t^2}\big).
\eeno
\end{itemize}

Such inviscid damping is surprising for a time reversible system. The basic mechanism leading to this phenomena is vorticity mixing driven by shear flow,
which may be related to the appearance of coherent structures in 2D turbulence. This behaviour is similar to Landau damping
\cite{Lan}, which predicted the rapid decay of the electric field of the linearized Vlasov equation around homogeneous equilibrium.

It is a very difficult problem to extend linear damping to nonlinear damping.
Mouhot and Villani \cite{Mou} made a breakthrough and proved nonlinear Landau damping for the perturbation in Gevrey class(see also \cite{Bed2}).
Motivated by \cite{Mou}, Bedrossian and Masmoudi also proved the nonlinear inviscid damping of 2D Euler equations around
Couette flow in infinite channel still for the perturbation in Gevrey class. Lin and Zeng \cite{LZ, LZ-CMP} also show that
nonlinear damping is not true for the perturbation in low regularity Sobolev spaces.

The goal of this paper is to prove linear damping for the 2D Euler equations around general shear flow.
In this case, there are few rigorous mathematical results.
Case \cite{Case} gave the formal prediction for the decay of the velocity by using the Laplace transform and the leading singularity of the
resolvent.  Rosencrans and Sattinger \cite{Ros} gave $t^{-1}$ decay of the stream function with a continuous spectrum projection for analytic monotone shear flow.
Stepin \cite{Ste} proved $t^{-\nu}(\nu<\mu_0)$ decay of the stream function for the monotone shear flow $u(y)\in C^{2+\mu_0}(\mu_0>\f12)$ without inflection point. In a very interesting paper
\cite{Bou}, Bouchet and Morita predicted similar decay estimates of the velocity for a class of stable shear flow with stationary streamlines by using the Laplace transform
and an important observation: depletion phenomena of the vorticity at the stationary streamlines. More precisely, they formally proved that
\beno
\om(t,x,y)\sim \om_\infty(x,y)\exp(-iku(y)t)+O(t^{-\gamma})\quad \textrm{as }t\rightarrow +\infty,
\eeno
where $\om_\infty(x,y_c)=0$ at stationary points $y_c$ of $u(y)$.

In a recent paper, C. Zillinger \cite{Zill} proved the same decay estimates as those of Couette flow given by Lin and Zeng \cite{LZ} for a class of monotone shear flow
in Sobolev spaces. However, his result imposed a strong assumption that $L\|u''\|_{W^{3,\infty}}$ is small, where $L$ is the wave-length with respect to $x$.
Moreover, he also required that the initial vorticity vanishes on $y=0,1$ in the case of finite channel. Thus, the linear inviscid damping
is still open for general monotone shear flow. \medskip

The main result of this paper is stated as follows.

\begin{theorem}
\label{Thm:main}
Let $u(y)\in C^4([0,1])$ be a monotone function. Suppose that the linearized operator $\mathcal{L}$
has no embedding eigenvalues. Assume that $\int_{\bbT}\om_0(x,y)dx=0$ and $P_{\mathcal{L}}\om_0=0$,
where $P_{\mathcal{L}}$ is the spectral projection to $\sigma_d\big(\mathcal{L}\big)$.
Then it holds that
\begin{itemize}

 \item[1.] if $\om_0(x,y)\in H^{-1}_xH^1_y$, then
\beno
\|V(t)\|_{L^2}\leq \frac{C}{\langle t\rangle}\|\omega_0\|_{H^{-1}_xH_y^1};
\eeno
\item[2.] if $\om_0(x,y)\in H^{-1}_xH^2_y$, then
\beno
\|V^2(t)\|_{L^2}\leq \frac{C}{\langle t\rangle^2}\|\omega_0\|_{H^{-1}_xH_y^2};
\eeno
\item[3.] if $\om_0(x,y)\in H^{-1}_xH^k_y$ for $k=0,1$, there exists $\om_\infty(x,y)\in H^{-1}_xH^k_y$ such that
\beno
\|\om(t,x+tu(y),y)-\om_\infty\|_{L^2}\longrightarrow 0\quad \textrm{as}\quad t\rightarrow +\infty.
\eeno
\end{itemize}
\end{theorem}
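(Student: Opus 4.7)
The plan is to follow the classical spectral approach of Case, rigorously developed for Couette flow by Lin--Zeng, but to substantially strengthen the resolvent analysis so that it applies to general $C^4$ monotone shear with no boundary vanishing condition on $\om_0$. First, the Fourier transform in $x$ decouples the modes, so it suffices to derive $\alpha$-uniform estimates on each mode $\widehat\om_\alpha(t,y)$, $\alpha\neq 0$, and then sum in $\alpha$. The hypotheses $\int_{\bbT}\om_0(x,y)\,dx=0$ and $P_{\cL}\om_0=0$ kill the zero mode and the discrete spectrum of $\cL$, so only the continuous spectrum of $\cL_\alpha/(i\alpha)$, contained in the interval $u([0,1])$, contributes.

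Next I would write the solution via the Dunford--Laplace contour integral
$$\widehat\om_\alpha(t,y)=\f{1}{2\pi i}\int_{\Gamma}e^{-i\alpha ct}\bigl[(\cL_\alpha/(i\alpha)-c)^{-1}\widehat\om_0\bigr](y)\,dc,$$
and deform $\Gamma$ onto the two sides of the interval $u([0,1])$, which is legal under the no embedded eigenvalue hypothesis. All the resolvent information is encoded in the stream function $\Psi_\alpha(y,c)$, which (up to an explicit $\alpha$-dependent constant factor) solves the inhomogeneous Rayleigh problem
$$\bigl(\pa_y^2-\alpha^2\bigr)\Psi_\alpha(y,c)-\f{u''(y)}{u(y)-c}\Psi_\alpha(y,c)=\f{\widehat\om_0(y)}{u(y)-c},\qquad \Psi_\alpha(0,c)=\Psi_\alpha(1,c)=0,$$
so everything reduces to sharp bounds on the boundary values $\Psi_\alpha(y,u(y_0)\pm i0)$ and their $y_0$-derivatives, uniform in $\alpha$ and $y_0\in[0,1]$.

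The analytic heart is the construction of a Green function for the Rayleigh operator with such uniform bounds. Two base solutions of the homogeneous Rayleigh equation are constructed---one growing and one decaying at scale $|\alpha|$---and their Wronskian is shown to be bounded away from zero on $u([0,1])$; this lower bound is exactly the no embedded eigenvalue hypothesis made quantitative, while $u\in C^4$ and the monotonicity of $u$ are used to track the logarithmic branch introduced by the critical layer $u(y)=c$. With these resolvent estimates in hand, the substitution $c=u(y_0)$ with non-vanishing Jacobian $u'(y_0)$ rewrites the solution as an oscillatory integral $\int_0^1 e^{-i\alpha u(y_0)t}K_\alpha(y,y_0)\,dy_0$. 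A single integration by parts in $y_0$ yields the $\langle t\rangle^{-1}$ gain underlying Part~1; a second integration by parts, consuming the extra $H^1_y$ regularity of $\om_0$ assumed in Part~2, gives the $\langle t\rangle^{-2}$ decay for $V^2=-\pa_x\psi$.

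The main obstacle, and what separates this from the Couette setting and from Zillinger's earlier analysis, is controlling the boundary terms at $y_0\in\{0,1\}$ that integration by parts would naively produce, since the hypothesis imposes no vanishing condition on $\om_0$ at the walls. I would address this by exploiting the Dirichlet condition $\Psi_\alpha(0,c)=\Psi_\alpha(1,c)=0$ which, combined with a careful limit analysis of $\Psi_\alpha(y,u(y_0)\pm i0)$ as $y_0\to 0$ or $y_0\to 1$, forces the would-be endpoint contributions either to cancel in the jump across the spectrum or to already carry the required decay. For Part~3, one exploits the identity $\pa_t\om(t,x+tu(y),y)=u''(y)V^2(t,x+tu(y),y)$ in the drifted frame: time-integrability of $\|V^2(t)\|_{L^2}$ obtained from Part~2 produces a Cauchy-in-time estimate on the profile for $H^1_y$ data, and a density argument then covers the $L^2_y$ case.
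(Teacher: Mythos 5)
Your overall framework (Fourier decomposition in $x$, Dunford contour integral, deformation onto the two sides of $u([0,1])$ under the no-embedded-eigenvalue hypothesis, reduction to the inhomogeneous Rayleigh problem and to uniform-in-$\alpha$ bounds on its Green function) coincides with the paper's setup through Section~7. But the step you rely on to produce the decay --- ``a second integration by parts in $y_0$ ... gives the $\langle t\rangle^{-2}$ decay'' --- is precisely the step that fails, and the paper is organized around avoiding it. The jump of the resolvent across the spectrum, $\widetilde\Phi(y,c)$, inherits from the homogeneous Rayleigh solution $\phi(y,c)\sim u'(y_c)(y-y_c)+\tfrac{u''(y_c)}{2u'(y_c)}(y-y_c)^2+\cdots$ a critical-layer singularity of the form $a+b\,(y-y_c)\log|y-y_c|$. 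Hence $\partial_c^2\widetilde\Phi\sim (y-y_c)^{-1}$ is not integrable in $c$ near $c=u(y)$, so the second integration by parts in the spectral variable does not converge absolutely; this singularity is intrinsic to the Green function and is not removed by assuming more regularity on $\om_0$ (it is exactly why Case, Rosencrans--Sattinger and Stepin stop at $t^{-1}$ for the stream function even for analytic profiles). The same problem already contaminates Part~1 for the component $V^1=\partial_y\psi$: after one integration by parts in $c$ you must control $\partial_c\partial_y\widetilde\Phi$, which again behaves like $(y-y_c)^{-1}$.

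The paper's route is genuinely different at this point and you would need to import it. Instead of stationary phase in $c$, one proves \emph{time-uniform} Sobolev bounds on the sheared vorticity profile $W(t,x,y)=\om(t,x+tu(y),y)$: namely $\|W(t)\|_{H^{-1}_xH^1_y}\le C\|\om_0\|_{H^{-1}_xH^1_y}$ and the weighted bound $\|\rho(u(y))\partial_y^2W(t)\|_{H^{-1}_xL^2_y}\le C\|\om_0\|_{H^{-1}_xH^2_y}$ (an unweighted $H^2$ bound is false in general, by Zillinger's example). These follow from the explicit representation $\widehat W=\widehat\om_0-\tfrac{u''}{\pi}\mathbf T(\mu)$ together with weighted $W^{2,2}$ regularity of the spectral density $\mu(c)$ and $L^p$ bounds for a family of multilinear singular integral operators built on the ``good derivative'' $(\partial_z+\partial_c)$ of $\phi_1$; the principal-value/Hilbert-transform structure there is the rigorous substitute for your second integration by parts. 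The decay then comes from a duality argument in which one integrates by parts in the \emph{physical} variable $y$ against $e^{-i\alpha u(y)t}$ (each integration by parts trades a $y$-derivative of $W$, available uniformly in $t$, for a factor $(\alpha t)^{-1}$), with the endpoint terms handled by the compact support of the test function and the Dirichlet condition on the auxiliary potential $\vartheta$ --- not by a cancellation in the jump across the spectrum as you suggest. Your Part~3 argument is essentially the paper's, but note that the density step for $H^1$ scattering itself requires the uniform $H^{-1}_xH^1_y$ bound on $W(t)$, i.e.\ the very profile estimates your sketch omits.
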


Let us give some remarks on Theorem \ref{Thm:main}.

\begin{itemize}

\item If $u(y)$ has no inflection points, then $\mathcal{L}$ has no eigenvalues. In Remark \ref{rem:spectrum}, we will present a sufficient condition on $u(y)$ in the case when $u(y)$ has inflection points so that $\mathcal{L}$ has no eigenvalues.

\item If the wave-length $L$ with respect to $x$ is suitably small, then $\mathcal{L}$ has no embedding eigenvalues.
In Lemma \ref{lem:spectrum}, we will present a sufficient and necessary condition on $u(y)$ so that $\mathcal{R}_\al$ has no embedding eigenvalues.

\item By Zillinger's recent result \cite{Zill2},
the $L_x^2H_y^2$ norm of $W(t,x,y)\triangleq\om(t,x+tu(y),y)$ may blow up. Thus, it is in general unexpected for the $H^2$ scattering.

\item Our proof strongly relies on the monotonicity of $u(y)$. Thus, it remains unknown whether the decay estimates predicted by Bouchet and Morita \cite{Bou}
for stable shear flow with stationary streamlines  can be justified.

\item Nonlinear inviscid damping is a challenging question even for the analytic perturbation.

\end{itemize}

\section{Sketch of the proof}

Our proof is based on the representation formula of the solution
\beno
\widehat{\psi}(t,\al,y)=\frac{1}{2\pi i}\int_{\partial\Omega}
e^{-i\al tc}(c-\mathcal{R}_\al)^{-1}\widehat{\psi}(0,\al,y)dc,
\eeno
where $\mathcal{R}_\al$ is the Rayleigh operator defined by (\ref{def:ray ope}).
Under the assumption that $\mathcal{L}$ has no embedding eigenvalues and $P_{\mathcal{L}}\om_0=0$, the asymptotic behaviour of the solution is only
related to the continuous spectrum. Thus, we only need to study
\beno
\widehat{\psi}(t,\al,y)=\frac{1}{2\pi i}\int_{u(0)}^{u(1)}
e^{-i\al tc}\lim_{\epsilon\rightarrow 0+}\big((c-i\epsilon-\mathcal{R}_\al)^{-1}-(c+i\epsilon-\mathcal{R}_\al)^{-1}\big)\widehat{\psi}(0,\al,y)dc.
\eeno

To establish the estimates of the resolvent $(c-\mathcal{R}_\al)^{-1}$, we need to study the inhomogeneous Rayleigh equation
\beno
\left\{
\begin{aligned}
&\Phi''-\al^2\Phi-\frac{u''}{u-c}\Phi=f,\\
&\Phi(0)=\Phi(1)=0,
\end{aligned}
\right.
\eeno
with $f=\f {\widehat{\om}_0(\al,y)} {i\al(u-c)}$. Indeed, it holds that
\beno
&&\lim_{\epsilon\rightarrow 0+}\big((c-i\epsilon-\mathcal{R}_\al)^{-1}-(c+i\epsilon-\mathcal{R}_\al)^{-1}\big)\widehat{\psi}(0,\al,y)\\
&&\quad=i\al\lim_{\epsilon\rightarrow 0+}\big(\Phi(y,c+i\epsilon)-\Phi(y,c-i\epsilon)\big)\triangleq i\al\widetilde{\Phi}(y,c).
\eeno
Thus, we obtain
\ben\label{eq:psi-sket}
\widehat{\psi}(t,\al,y)=\frac{1}{2\pi}\int_{u(0)}^{u(1)}
e^{-i\al tc}\al\widetilde{\Phi}(y,c)dc.
\een

Formally, if one can show that $\widetilde{\Phi}(y,c)\in W^{2,1}$ in $c$, then integration by parts gives
\beno
\widehat{\psi}(t,\al,y)\sim O(t^{-2})\int_{u(0)}^{u(1)}e^{-i\al tc}\pa_c^2\widetilde{\Phi}(y,c)dc\sim O(t^{-2}).
\eeno
One of main difficulties is
that the solution $\Phi(y,c)$ of the inhomogeneous Rayleigh equation has a singularity of order $(y-y_c)\log|y-y_c|$ with $y_c=u^{-1}(c)$(see \cite{Case, Bou}).
Thus, $\pa_c^2\widetilde{\Phi}(y,c)\sim \f 1 {y-y_c}\notin L^1$. This may be the main reason why the authors in \cite{Case, Ros, Ste}
only obtained the $O(t^{-1})$ decay of the stream function even in the analytic framework.

Indeed, in Section 6 and 7, we will show that
$\al\widetilde{\Phi}(y,c)=2\rho(c)\mu(c)\G(y,c)$ with $\rho(c)=(c-u(0))(u(1)-c)$ and
\beno
\G(y,c)=
\left\{
\begin{aligned}
&\phi(y,c)\int_0^y\frac{1}{\phi(z,c)^2}dz\quad0\leq y<y_c,\\
&\phi(y,c)\int_1^y\frac{1}{\phi(z,c)^2}dz\quad y_c<y\leq 1,
\end{aligned}
\right.
\eeno
where $\phi(y,c)$ is the solution of the homogeneous Rayleigh equation:
\beno
\left\{
\begin{array}{l}
\phi''-\al^2\phi-\frac{u''}{u-c}\phi=0,\\
\phi(y_c,c)=0,\quad \phi'(y_c,c)=u'(y_c).
\end{array}\right.
\eeno
Thus, $\phi(y,c)$ has the behaviour near $y_c$:
\beno
\phi(y,c)\sim u'(y_c)(y-y_c)+\f {u''(y_c)} {2u'(y_c)}(y-y_c)^2+o\big((y-y_c)^2\big),
\eeno
which implies that $\G(y,c)$ has the behaviour near $y_c$:
\beno
\G(y,c)\sim a+b(y-y_c)\log|y-y_c|,
\eeno
for some constants $a,b$.

The goal of Section 4-Section 5 is to obtain various kinds of uniform estimates for the solution of the homogeneous Rayleigh equations.
The assumption that $u(y)$ is monotone plays an important role.
In Section 8, we will establish the weighted $H^2$ estimate of $\mu(c)$, where we need to
assume that $\mathcal{L}$ has no embedding eigenvalues.

Based on the solution formula (\ref{eq:psi-sket}), using the weighted $H^2$ estimate of $\mu(c)$ and $L^p$ boundedness for various kinds of singular integral operators,
we will establish the $H^2_y$ estimate of $W(t,x,y)=\om(t,x+tu(y),y)$ in Section 9.
In fact, we only prove the weighted $H^2_y$ estimate, while $H^2_y$ bound is impossible in general.

With the uniform Sobolev estimates of the vorticity, the decay estimates can be deduced by following the dual argument introduced by Lin and Zeng \cite{LZ}.

In the appendix, we will establish $L^p$ boundedness for various kinds of singular integral operators, which was used in Section 8 and Section 9.

%%%%%%%%%%%%%%%%%%%%%%%%%%%%%%%%%%%%%%%%%%%%%%%%%%%%%%%%%%%%%%
\section{Spectrum of the linearized operator}

In terms of the stream function $\psi$, the linearized Euler equations take
\beq\label{eq:Euler-L-stream}
\partial_t\Delta{\psi}+u(y)\partial_x\Delta{\psi}-u''(y)\partial_x{\psi}=0.
\eeq
Taking the Fourier transform in $x$, we get
\beno
(\partial_y^2-\al^2)\partial_t\widehat{\psi}=i\al\big(u''(y)-u(y)(\partial_y^2-\al^2)\big)\widehat{\psi}.
\eeno
Inverting the operator $(\partial_y^2-\al^2)$, we find
\ben\label{eq:Euler-L-operator}
-\frac{1}{i\al}\partial_t\widehat{\psi}=\mathcal{R}_\al\widehat{\psi},
\een
where
\ben\label{def:ray ope}
\mathcal{R}_\al\widehat{\psi}=-(\partial_y^2-\al^2)^{-1}\big(u''(y)-u(\partial_y^2-\al^2)\big)\widehat{\psi}.
\een
It is easy to show that
\beno
\bigcup_{\al}\sigma_d(i\al\mathcal{R}_\al)=\sigma_d(\mathcal{L}).
\eeno

Let us recall some classical results for the spectrum $\sigma(\mathcal{R}_\al)$ of the operator $\mathcal{R}_\al$(see \cite{Ros, Ste}for more details).

\begin{itemize}

\item[1.] The spectrum $\sigma(\mathcal{R}_\al)$ is compact;

\item[2.] The continuous spectrum $\sigma_c(\mathcal{R}_\al)$ is contained in the range $ \textit{Ran}(u)$ of $u(y)$;

\item[3.] The eigenvalues of $\mathcal{R}_\al$ can not cluster except possibly along on $\textit{Ran}(u)$;

\item[4.] If $u(y)$ has no infection points in $[0,1]$, then $\mathcal{R}_\al$ has no embedding eigenvalues;

\item[5.] If $u(y)$ has  inflection points, then $\mathcal{R}_{\al}$ has no embedding eigenvalues for $\al^2>\al_{max}^2$,
where
\beno
\al_{max}^2\eqdef-\inf_{y_c: u''(y_c)=0}\inf_{\phi\in H^{1}_0(0,1)}\frac{\int_0^1|\phi'(y)|^2-\frac{u''(y)}{u(y)-u(y_c)}|\phi(y)|^2dy}{\int_0^1|\phi(y)|^2dy}.
\eeno
\end{itemize}

\no{\it Proof of 4}. Let $\phi\in H^2(0,1)\cap H_0^1(0,1)$ be an eigenfunction of $\mathcal{R}_\al$ with the eigenvalue $c\in \textit{Ran}(u)$, i.e.,
$\mathcal{R}_\al\phi=c\phi$, which can be reduced to the well-known Rayleigh equation:
\beq\label{eq:Rayleigh}
(u-c)(\phi''-\al^2\phi)-u''\phi=0.
\eeq
If $u''(u^{-1}(c))\neq 0$, $\phi(u^{-1}(c))=0$ by (\ref{eq:Rayleigh}).
Taking the inner product with $\frac{\phi}{u-c}$ on both sides of (\ref{eq:Rayleigh}), we obtain
\beno
\int_{0}^1\phi''{\phi}dy-\al^2\int_0^1|\phi|^2dy-\int_0^1u''\frac{|\phi|^2}{u-c}dy=0.
\eeno
Integration by parts gives
\beno
-\int_0^1\Big|\phi'-u'\frac{\phi}{u-c}\Big|^2dy-\al^2\int_0^1|\phi|^2dz=0,
\eeno
which implies that $\phi\equiv 0$. Thus, $u''(u^{-1}(c))=0$ if $c$ is an embedding eigenvalue.

\medskip

\no{\it Proof of 5.} If $c$ is an embedding eigenvalue with eigenfunction $\phi$, then we have
\beno
\int_0^1|\phi'|^2-\frac{u''}{u-c}|\phi|^2dy+\al^2\int_0^1|\phi|^2dy=0.
\eeno
However, for $\al^2>\al_{max}^2$, we have
\beno
\int_0^1|\phi'|^2-\frac{u''}{u-c}|\phi|^2dy+\al^2\int_0^1|\phi|^2dy\neq0.
\eeno

In Lemma \ref{lem:spectrum}, we will present a sufficient and necessary condition on $u(y)$ so that $\mathcal{R}_\al$ has no embedding eigenvalues. In Remark \ref{rem:spectrum}, we will present a sufficient condition so that $\mathcal{R}_\al$ has no eigenvalues, which in particular implies that $\mathcal{R}_\al$ has no eigenvalues if $u(y)$ has inflection points in $[0,1]$ and $\al^2>\al_{max}^2$.

Let $\Omega$ be a simple connected domain including the spectrum $\sigma(\mathcal{R}_\al)$ of $\mathcal{R}_\al$.
We have the following representation formula of the solution to (\ref{eq:Euler-L-operator}):
\ben\label{eq:stream formula}
\widehat{\psi}(t,\al,y)=\frac{1}{2\pi i}\int_{\partial\Omega}
e^{-i\al tc}(c-\mathcal{R}_\al)^{-1}\widehat{\psi}(0,\al,y)dc.
\een
Thus, the large time behaviour of the solution $\widehat{\psi}(t,\al,y)$
is reduced to the study of the resolvent $(c-\mathcal{R}_\al)^{-1}$.

%%%%%%%%%%%%%%%%%%%%%%%%%%%%%%%%%%%%%%%%%%%%%%%%%%%%%%%%%%%%%%
\section{The homogeneous Rayleigh equation}

To study the resolvent $(c-\mathcal{R}_\al)^{-1}$, we first construct a smooth solution of the homogeneous Rayleigh equation on $[0,1]$:
\ben\label{eq:Rayleigh-H}
(u-c)(\phi''-\al^2\phi)-u''\phi=0,
\een
where the complex constant $c$ will be taken in four kinds of domains defined by
\beno
&&D_0\triangleq\big\{c\in [u(0),u(1)]\big\},\\
&&D_{\epsilon_0}\triangleq\big\{c=c_r+i\epsilon,~c_r\in [u(0),u(1)], 0<|\epsilon|<\epsilon_0\big\},\\
&&B_{\epsilon_0}^l\triangleq\big\{c=u(0)+\epsilon e^{i\theta},~0<\epsilon<\epsilon_0,~\frac{\pi}{2}\leq \theta\leq\frac{3\pi}{2}\big\},\\
&&B_{\epsilon_0}^r\triangleq\big\{c=u(1)-\epsilon e^{i\theta},~0<\epsilon<\epsilon_0,~\frac{\pi}{2}\leq \theta\leq\frac{3\pi}{2}\big\},
\eeno
for some $\epsilon_0\in (0,1)$. We denote
\ben\label{eq:Omega}
\Om_{\epsilon_0}\triangleq D_0\cup D_{\epsilon_0}\cup B_{\epsilon_0}^l\cup B^r_{\epsilon_0}.
\een

In the sequel, we always assume that $u(y)\in C^4([0,1])$ and satisfies $u'(y)\ge c_0$ for some $c_0>0$.

\subsection{Functional space and the integral operator}

Given $|\al|\ge 1$, let $A$ be a constant larger than $C|\al|$ with
$C\ge 1$ only depending on $c_0$ and $\|u\|_{C^4}$.

\begin{definition}
For a function $f(y,c)$ defined on $[0,1]\times \Om_{\epsilon_0}$, we define
\beno
&&\|f\|_{X_0}\eqdef\sup_{(y,c)\in [0,1]\times D_0}\bigg|\frac{f(y,c)}{\cosh(A(y-y_c))}\bigg|,\\
&&\|f\|_{X}\eqdef\sup_{(y,c)\in [0,1]\times D_{\epsilon_0}}\bigg|\frac{f(y,c)}{\cosh(A(y-y_c))}\bigg|,\\
&&\|f\|_{X_l}\eqdef\sup_{(y,c)\in [0,1]\times B_{\epsilon_0}^l}\bigg|\frac{f(y,c)}{\cosh(Ay)}\bigg|,\\
&&\|f\|_{X_r}\eqdef\sup_{(y,c)\in [0,1]\times B_{\epsilon_0}^r}\bigg|\frac{f(y,c)}{\cosh(A(y-1))}\bigg|,
\eeno
where $y_c=u^{-1}(c_r)$ with
\ben\label{def:cr}
c_r=Re\, c \quad \textrm{for }c\in D_{\epsilon_0},\quad c_r=u(0)\quad \textrm{for }c\in B_{\epsilon_0}^l,
\quad c_r=u(1)\quad \textrm{for }c\in B_{\epsilon_0}^r.
\een
\end{definition}

\begin{definition}
For a function $f(y,c)$ defined on $[0,1]\times \Om_{\epsilon_0}$, we define
\beno
&&\|f\|_{Y_C}\eqdef\sum_{k=0}^2\sum_{|\b|=k}A^{-k}\|\partial^{\b}f\|_{X_0},\\
&&\|f\|_{Y}\eqdef\|f\|_{X}+\frac{1}{A}\big(\|\partial_xf\|_{X}+\|\partial_{c}f\|_{X}\big),\\
&&\|f\|_{Y_l}\eqdef\|f\|_{X_l}+\frac{1}{A}\big(\|\partial_xf\|_{X_l}+\|\partial_{c}f\|_{X_l}\big),\\
&&\|f\|_{Y_r}\eqdef\|f\|_{X_r}+\frac{1}{A}\big(\|\partial_xf\|_{X_r}+\|\partial_{c}f\|_{X_r}\big).
\eeno

\end{definition}

\begin{definition}
Let $y_c=u^{-1}(c_r)$ with $c_r$ defined by (\ref{def:cr}).
The integral operator $T$ is defined by
\beno
T\triangleq T_0\circ T_{2,2}\eqdef\int_{y_c}^y\f 1 {(u(y')-c)^2}\int_{y_c}^{y'}f(z,c)(u(z)-c)^2dzdy',
\eeno
where
\beno
&&T_0f(y,c)\eqdef\int_{y_c}^yf(z,c)dz,\\
&&T_{k,j}f(y,c)\eqdef\frac{1}{(u(y)-c)^j}\int_{y_c}^yf(z,c)(u(z)-c)^kdz,\quad j\le k+1.
\eeno
\end{definition}

\begin{lemma}\label{lem:T-bound}
There exists a constant $C$ independent of $A$ such that
\beno
&&\|Tf\|_{Y_C}\leq \frac{C}{A^2}\|f\|_{Y_C},\quad\|Tf\|_{Y}\leq \frac{C}{A^2}\|f\|_Y,\\
&&\|Tf\|_{Y_l}\leq \frac{C}{A^2}\|f\|_{Y_l},\quad\|Tf\|_{Y_r}\leq \frac{C}{A^2}\|f\|_{Y_r}.
\eeno
\end{lemma}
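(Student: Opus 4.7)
The plan is to reduce all four inequalities to the single pointwise estimate that $T$ maps the canonical weight $W(y,c)\triangleq\cosh(A(y-y_c))$ to at most $(C/A^{2})\,W(y,c)$ in each of the four regions. The monotonicity assumption $u'\ge c_{0}$, together with the definitions of $y_{c}$ in each region, yields the uniform comparability
\[
c_{0}|z-y_{c}|\le|u(z)-c|\le C\bigl(|z-y_{c}|+|\mathrm{Im}\,c|\bigr),
\]
so the kernel of $T$ is pointwise bounded by $(t^{2}+\epsilon^{2})/(s^{2}+\epsilon^{2})$ with $s=y'-y_{c}$, $t=z-y_{c}$ and $\epsilon=|\mathrm{Im}\,c|$. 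This reduces every estimate to a one-dimensional integration-by-parts computation that gives the $A^{-2}$ gain.

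First I would prove the unweighted bound $\|Tf\|_{X_{\star}}\le(C/A^{2})\|f\|_{X_{\star}}$ for $X_{\star}\in\{X_{0},X,X_{l},X_{r}\}$. Splitting $T=T_{0}\circ T_{2,2}$, the key point is that $(u(z)-c)^{2}$ vanishes at $z=y_{c}$ to the same order as $(u(y)-c)^{2}$, so the ratio appearing in $T_{2,2}$ is pointwise bounded. Explicit integration by parts of $\int_{0}^{y-y_{c}}t^{2}\cosh(At)\,dt$ followed by division by $(y'-y_{c})^{2}$ yields $|T_{2,2}[Wg](y,c)|\le(C/A)\|g\|_{X_{\star}}W(y,c)$, and applying $T_{0}$ to a function of size $W/A$ gains another $A^{-1}$ since $\int_{y_{c}}^{y}\cosh(A(z-y_{c}))\,dz=A^{-1}\sinh(A(y-y_{c}))$. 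For the derivative terms in $Y,Y_{l},Y_{r}$, the identity $\partial_{y}Tf=T_{2,2}f$ gives $\|\partial_{y}Tf\|_{X_{\star}}\le(C/A)\|f\|_{X_{\star}}$ for free; for $\partial_{c}Tf$, I would differentiate the integrand in $T$, producing operators of the same "double antiderivative with matched weights" structure (e.g.\ $T_{0}\circ T_{2,3}$ from $\partial_{c}(u-c)^{-2}$ and $T_{0}\circ T_{1,2}$ from $\partial_{c}(u-c)^{2}$), plus a copy of $T$ acting on $\partial_{c}f$. Each of these admits the same integration-by-parts argument and gains the full $A^{-2}$. For the $Y_{C}$ bound on $D_{0}$ the second derivatives are handled by one more iteration of the same scheme; because $c$ is real there, no complex-analytic subtleties arise.

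The main technical obstacle I expect is the $\partial_{c}$ derivative on $D_{\epsilon_{0}}$: if one differentiates $Tf$ including the $c$-dependence of the lower limit $y_{c}=u^{-1}(\mathrm{Re}\,c)$, a boundary contribution at $z=y_{c}$ appears which does not vanish when $\mathrm{Im}\,c\ne 0$, since $(u(y_{c})-c)^{2}=-(\mathrm{Im}\,c)^{2}$. Handling this term requires either exhibiting a cancellation with the interior contribution or estimating $\int_{y_{c}}^{y}(u(y')-c)^{-2}dy'$ separately, using $|u(y')-c|\ge\sqrt{c_{0}^{2}(y'-y_{c})^{2}+(\mathrm{Im}\,c)^{2}}$ and trading the factor $(\mathrm{Im}\,c)^{2}$ against the $y'$-integral. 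Verifying that the remainder still gains the full $A^{-2}$ (and not merely $A^{-1}$) uniformly in $\epsilon_{0}\in(0,1)$ will be the delicate point. Once that contribution is absorbed into the $T_{0}\circ T_{j,k}$ framework, the four estimates of the lemma follow in parallel in their respective regions.
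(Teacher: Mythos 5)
Your proposal follows essentially the same route as the paper: the splitting $T=T_0\circ T_{2,2}$, the pointwise bound on the ratio $(u(z)-c)^2/(u(y')-c)^2$ coming from monotonicity, the gain of $A^{-1}$ from each integration against $\cosh(A(\cdot-y_c))$, and the derivative identities $\partial_yTf=T_{2,2}f$ and $\partial_cTf=2T_0\circ T_{2,3}f-2T_0\circ T_{1,2}f+T\partial_cf$ are exactly what the paper uses. The boundary contribution at $z=y_c$ for $\mathrm{Im}\,c\neq0$ that you flag as the delicate point is precisely what the paper elides by writing out only the $Y_C$ case on $D_0$ (where $(u(y_c)-c)^2=0$, so the term vanishes) and declaring the other three inequalities ``similar.''
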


\begin{proof}
We only prove the first inequality. The proof of the other inequalities is similar.
A direct calculation shows
\begin{align*}
\|T_0f\|_{X_0}
=&\sup_{(y,c)\in[0,1]\times D_0}\bigg|
\frac{1}{\cosh A(y-y_c)}\int_{y_c}^y
\frac{f(z,c)}{\cosh A(z-y_c)}\cosh A(z-y_c)dz\bigg|\\
\leq &\sup_{(y,c)\in[0,1]\times D_0}\bigg|
\frac{1}{\cosh A(y-y_c)}\int_{y_c}^y
\cosh A(z-y_c)dz\bigg|\|f\|_{X_0}\\
\leq &\frac{1}{A}\|f\|_{X_0},
\end{align*}
and
\begin{align*}
\|T_{2,2}f\|_{X_0}
\leq &\sup_{(y,c)\in[0,1]\times D_0}
\bigg|\frac{y-y_c}{\cosh A(y-y_c)}
\int_0^1
\cosh tA(y-y_c)dt
\bigg|\|f\|_{X_0}\\
\leq& \frac{C}{A}\|f\|_{X_0},
\end{align*}
which imply
\ben\label{eq:T-bound-1}
\|Tf\|_{X_0}\le \frac C {A^2}\|f\|_{X_0}.
\een

It is easy to see that
\beno
&&\partial_yTf(y,c)=T_{2,2}f(y,c),\\
&&\partial_cTf(y,c)=2T_0\circ T_{2,3}f(y,c)-2T_0\circ T_{1,2}f(y,c)+T\partial_cf(y,c),
\eeno
and
\begin{align*}
\|T_{k,k+1}f\|_{X_0}
\leq &C\sup_{(y,c)\in[0,1]\times D_0}
\bigg|\frac{1}{\cosh A(y-y_c)}\int_0^1
\cosh tA(y-y_c)dt
\bigg|\|f\|_{X_0}\\
\leq&  C\|f\|_{X_0},
\end{align*}
which along with (\ref{eq:T-bound-1}) yield
\ben\label{eq:T-bound-2}
&\|\partial_{y,c}Tf\|_{X_0}\leq \frac{C}{A}\|f\|_{X_0}
+\frac C {A^2}\|\partial_{c}f\|_{X_0}.
\een

Using the formula
\begin{align*}
T_{k,k+1}f(y,c)
=\int_0^1\frac{\big(\int_0^1u'(y_c+st(y-y_c))ds\big)^{k}}{\big(\int_0^1u'(y_c+s(y-y_c))ds\big)^{k+1}}f(y_c+t(y-y_c),c)t^{k}dt,
\end{align*}
we can deduce that
\begin{align*}
\|\partial_{y,c}^2T_{k,k+1}f\|_{X_0}
\leq & C\sum_{|\b|\le 2}\|\pa^{\b}f\|_{X_0}.
\end{align*}
Then by a similar argument leading to (\ref{eq:T-bound-2}), we obtain
\ben\label{eq:T-bound-3}
&\|\partial_{y,c}^2Tf\|_{X_0}\leq C\|f\|_{X_0}+\f C A\|\pa_{y,c}f\|_{X_0}
+\frac C {A^2}\|\partial_{y,c}^2f\|_{X_0}.
\een

Putting (\ref{eq:T-bound-1})-(\ref{eq:T-bound-3}) together, we conclude the first inequality.
\end{proof}

%%%%%%%%%%%%%%%%%%%%%%%%%%%%%%%%
\subsection{Existence of the solution}

\begin{proposition}\label{prop:Rayleigh-Hom}
There exists a solution $\phi(y,c)\in C^1\big([0,1]\times \Omega_{\epsilon_0}\setminus D_0\big)\cap C\big([0,1]\times \Omega_{\epsilon_0}\big)$
of the Rayleigh equation (\ref{eq:Rayleigh-H}). Moreover, there exists $\epsilon_0>0$ such that for any $\epsilon\in[0,\epsilon_0)$ and $(y,c)\in [0,1]\times \Om_{\epsilon_0}$,
\beno
&&|\phi_{1}(y,c)|\ge \f12,\quad |\phi_{1}(y,c)-1|\leq C |u(y)-c|^2,
\eeno
where $\phi_1(y,c)=\frac {\phi(y,c)} {u(y)-c}$, and  the constant $C$ may depend on $\al$.
\end{proposition}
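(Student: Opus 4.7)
The first step is to reduce to an integral equation via the substitution $\phi(y,c)=(u(y)-c)\phi_1(y,c)$. A direct computation cancels the $u''\phi$ term and transforms (\ref{eq:Rayleigh-H}) into the divergence-form equation
\beno
\big[(u-c)^2\phi_1'\big]' = \al^2(u-c)^2\phi_1.
\eeno
Integrating once from $y_c$, with the boundary contribution $(u(y_c)-c)^2\phi_1'(y_c)$ set to zero (this is automatic for real $c$ since $u(y_c)=c$, and is the natural normalization selecting the Frobenius-regular branch for complex $c$), and integrating once more with $\phi_1(y_c,c)=1$, produces the fixed-point equation
\beno
\phi_1 = 1 + \al^2 T\phi_1,
\eeno
where $T$ is precisely the integral operator defined just above.

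Next I would apply Lemma \ref{lem:T-bound}. Choosing $A\ge C_0|\al|$ with $C_0$ large enough that $\al^2\cdot C/A^2\le 1/2$ in each of the four norms, the map $\Phi\mapsto 1+\al^2 T\Phi$ becomes a contraction with ratio at most $1/2$ on each of $Y_C$, $Y$, $Y_l$, $Y_r$. Banach's fixed-point theorem then yields a unique $\phi_1$ with $\|\phi_1-1\|_{*}\le 1$ in each of these spaces, and uniqueness on the overlaps of the four subdomains glues these into a single function on $[0,1]\times\Om_{\epsilon_0}$. Membership in $Y$, $Y_l$, $Y_r$ supplies bounded first $(y,c)$-derivatives, giving $C^1$ regularity on $\Om_{\epsilon_0}\setminus D_0$, while the $Y_C$ norm with its two-derivative control yields continuity across the critical line $D_0$. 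The original $\phi=(u-c)\phi_1$ then inherits the claimed regularity.

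For the two pointwise estimates I would iterate the integral equation. Using the changes of variable $z=y_c+s(y'-y_c)$ and $y'=y_c+t(y-y_c)$ together with $c_0\le u'\le \|u\|_{C^1}$, a straightforward induction gives
\beno
|T^k(1)(y,c)|\le \frac{M^k|y-y_c|^{2k}}{(2k+1)!}
\eeno
for a geometric constant $M$. Summing the Neumann series $\phi_1-1=\sum_{k\ge 1}(\al^2 T)^k(1)$ yields $|\phi_1-1|\le C(\al)|y-y_c|^2$, and since $c_0|y-y_c|\le |u(y)-c_r|\le |u(y)-c|$ this delivers the quadratic bound $|\phi_1-1|\le C|u(y)-c|^2$. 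For the lower bound, on $D_0$ each iterate $(\al^2 T)^k(1)$ is real and nonnegative (the integrand in each double integral has the same sign as $y-y_c$ over the interval of integration), so $\phi_1(y,c)\ge 1$. For $c\in\Om_{\epsilon_0}\setminus D_0$, the $Y$-, $Y_l$-, $Y_r$-norm bounds give $|\pa_c\phi_1|\le CA\cosh A(y-y_c)$, uniformly bounded in $y\in[0,1]$; shrinking $\epsilon_0$ (allowed to depend on $\al$ through $A$) then transports the lower bound from the real axis to give $|\phi_1(y,c)|\ge 1/2$ throughout $\Om_{\epsilon_0}$.

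The main obstacle is the regular singular behavior of the Rayleigh equation at the critical line $u(y)=c_r$: one must verify that the integral formulation genuinely picks out the smooth Frobenius branch and propagates enough regularity in both $y$ and $c$ across this line. That is exactly what the weighted space $Y_C$, with its second-order $(y,c)$-control, is designed to encode, and the technical heart of the argument lies in Lemma \ref{lem:T-bound}, which quantifies how $T$ trades powers of $A$ for regularity. Once that lemma is in hand, the contraction-mapping step above is essentially routine.
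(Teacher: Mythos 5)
Your overall strategy coincides with the paper's: reduce the Rayleigh equation to $\phi_1=1+\al^2T\phi_1$, invert $I-\al^2T$ in each of $Y_C,Y,Y_l,Y_r$ via Lemma \ref{lem:T-bound}, and read off the pointwise bounds from the Neumann series together with the positivity of $T$ on $D_0$. The derivation of the integral equation, the quadratic bound $|\phi_1-1|\le C|u(y)-c|^2$ via $|T^k1|\lesssim M^k|y-y_c|^{2k}/(2k)!$, and the lower bound $\phi_1\ge1$ for $c\in D_0$ are all correct and are essentially what the paper does.

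The one step that does not hold up as written is the continuity of $\phi$ across the critical segment $D_0$. You attribute it to ``the $Y_C$ norm with its two-derivative control,'' but $Y_C$ is a norm on functions restricted to $[0,1]\times D_0$: it controls $\partial_y$ and $\partial_c$ of $\phi_1$ \emph{along} $D_0$ and says nothing about how the solution constructed separately for $c\in D_{\epsilon_0}$ behaves as $\mathrm{Im}\,c\to 0$. The same issue affects your transport of the lower bound: the estimate $|\partial_c\phi_1|\le CA\cosh A(y-y_c)$ on $D_{\epsilon_0}$ shows that $\phi_1(y,c_r+i\epsilon)$ is Cauchy as $\epsilon\to 0^+$, but not that its limit equals the $Y_C$-solution $\phi_1(y,c_r)$, which is what the inequality $|\phi_1|\ge 1/2$ off the real axis requires. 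The paper closes this gap by writing $Tf(y,c)=(y-y_c)^2\iint_{[0,1]^2}f(y_c+st(y-y_c),c)K_0(s,t,y,c)\,dt\,ds$ with $K_0$ jointly continuous on $[0,1]^3\times\Omega_{\epsilon_0}$ and $|K_0|\le s$, so that $T$ preserves $C([0,1]\times\Omega_{\epsilon_0})$ and the single Neumann series $\sum_{k\ge0}\al^{2k}T^k1$ converges uniformly on all of $[0,1]\times\Omega_{\epsilon_0}$; joint continuity of $\phi_1$ up to and across $D_0$, and hence the uniform lower bound after shrinking $\epsilon_0$, then follow. You already have every ingredient for this; you just need to run the Neumann-series argument on the whole of $\Omega_{\epsilon_0}$ rather than only on $D_0$.
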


The proof is based the following lemmas.

\begin{lemma}\label{lem:Ray-D}
Let $c\in D_{\epsilon_0}$ and $y_c=u^{-1}(c_r)$.
Then there exists a solution $\phi(x,c)\in Y$ to the Rayleigh equation
\beno
\left\{
\begin{array}{l}
\phi''-\al^2\phi-\frac{u''}{u-c}\phi=0,\\
\frac{\phi(y_c,c)}{u(y_c)-c}=1,\quad \Big(\frac{\phi(y,c)}{u(y)-c}\Big)'\big|_{y=y_c}=0.
\end{array}\right.
\eeno
Moreover, there holds
\begin{align*}
\|\phi\|_{Y}\leq C,
\end{align*}
where the constant $C$ may depend on $\al$.
\end{lemma}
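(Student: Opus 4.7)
The plan is to eliminate the singular coefficient $u''/(u-c)$ by the classical Wronskian-type substitution $\phi(y,c) = (u(y)-c)\phi_1(y,c)$. A direct computation gives
$$\phi''-\frac{u''}{u-c}\phi = 2u'\phi_1'+(u-c)\phi_1'',$$
so (\ref{eq:Rayleigh-H}) becomes $(u-c)\phi_1''+2u'\phi_1'-\alpha^2(u-c)\phi_1=0$, which after multiplication by $(u-c)$ takes the divergence form
$$\bigl((u-c)^2\phi_1'\bigr)'=\alpha^2(u-c)^2\phi_1.$$
The normalizations in the hypothesis translate to $\phi_1(y_c,c)=1$, $\phi_1'(y_c,c)=0$, and because $c\in D_{\epsilon_0}$ has nonzero imaginary part, $|u(z)-c|\ge|\mathrm{Im}\,c|$ so all expressions below are well-defined on the real interval $[0,1]$.

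I would then integrate the divergence-form equation twice from $y_c$. The first integration, using $\phi_1'(y_c)=0$, yields $\phi_1' = \alpha^2 T_{2,2}\phi_1$; the second, using $\phi_1(y_c)=1$, yields
$$\phi_1 = 1+\alpha^2 T\phi_1,$$
with $T = T_0\circ T_{2,2}$ exactly as in the Definition preceding Lemma \ref{lem:T-bound}.

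Existence in $Y$ then follows directly from Lemma \ref{lem:T-bound}. Taking the free constant $A$ large enough (it suffices that $A\ge C|\alpha|$ with $C$ the constant in Lemma \ref{lem:T-bound}), one has $\|\alpha^2 T\|_{Y\to Y}\le \tfrac12$, so $I-\alpha^2 T$ is invertible on $Y$ by a Neumann series. Since the constant function $1$ satisfies $\|1\|_Y\le 1$, this gives $\phi_1 = (I-\alpha^2 T)^{-1}1 \in Y$ with $\|\phi_1\|_Y\le 2$. The boundary conditions for $\phi_1$ are built into the integral equation, and differentiating the latter twice recovers (\ref{eq:Rayleigh-H}). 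Finally, since $u\in C^4([0,1])$ the map $\phi_1\mapsto(u-c)\phi_1$ is bounded on $Y$ uniformly for $c$ in a neighborhood of $[u(0),u(1)]$, so $\phi=(u-c)\phi_1\in Y$ with $\|\phi\|_Y\le C$.

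The one subtle point is that the formally singular factor $(u(y')-c)^{-2}$ in $T_{2,2}$ amplifies as $\mathrm{Im}\,c\to 0$; but the $\cosh(A(y-y_c))$-weighted norms defining $X$ and $Y$ are tailored precisely to absorb this, and that cancellation is packaged into Lemma \ref{lem:T-bound}. Thus the main technical obstacle has already been isolated in that lemma, and the present proof is essentially the contraction step plus the $(u-c)$-multiplication at the end.
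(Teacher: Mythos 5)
Your proof is correct and follows essentially the same route as the paper: the substitution $\phi=(u-c)\phi_1$, the divergence form $\bigl((u-c)^2\phi_1'\bigr)'=\al^2(u-c)^2\phi_1$, the integral equation $\phi_1=1+\al^2T\phi_1$, and inversion of $I-\al^2T$ in $Y$ via Lemma \ref{lem:T-bound}. The only additions are your explicit remarks on well-definedness for $\mathrm{Im}\,c\neq 0$ and on the Neumann series, which the paper leaves implicit.
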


\begin{proof}
Let $\phi_{1}=\frac{\phi}{u-c}$, then
$\phi_{1}$ satisfies
\beno
\big((u-c)^2\phi_{1}'\big)'=\al^2\phi_{1}(u-c)^2,
\eeno
from which, we infer that
\beno
\phi_{1}(y,c)=1+\int_{y_c}^y\frac{\al^2}{(u(y')-c)^2}\int_{y_c}^{y'}\phi_{1}(z,c)(u(z)-c)^2dzdy'.
\eeno
This means that $\phi_{1}(y,c)$ satisfies
\beno
\phi_{1}(y,c)=1+\al^2T\phi_{1}(y,c).
\eeno
It follows from Lemma \ref{lem:T-bound} that the operator $I-\al^2T$ is invertible in the space $Y$. Thus,
\beno
\phi_{1}(y,c)=(I-\al^2T)^{-1}1,
\eeno
with the bound $\|\phi_1\|_Y\le C$, hence $\|\phi\|_Y\le C$.
\end{proof}

In a similar way as in Lemma \ref{lem:Ray-D}, we can show that

\begin{lemma}\label{Lem:Ray-Bl}
Let $c\in B^l_{\epsilon_0}$.
Then there exists a solution $\phi(x,c)\in Y_l$ to the Rayleigh equation
\beno
\left\{
\begin{array}{l}
\phi''-\al^2\phi-\frac{u''}{u-c}\phi=0,\\
\frac{\phi(0,c)}{u(0)-c}=1,\quad \Big(\frac{\phi(y,c)}{u(y)-c}\Big)'\big|_{y=0}=0.
\end{array}\right.
\eeno
Moreover, there holds
\begin{align*}
\|\phi\|_{Y_l}\leq C,
\end{align*}
where the constant $C$ may depend on $\al$.
\end{lemma}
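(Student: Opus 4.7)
The plan is to mirror, almost verbatim, the argument given for Lemma \ref{lem:Ray-D}, exploiting the fact that for $c\in B^l_{\epsilon_0}$ the convention (\ref{def:cr}) sets $c_r=u(0)$, so the base point $y_c=u^{-1}(c_r)$ equals $0$. Consequently the integral operator $T$ that appears is rooted precisely at the boundary point $0$, which is where we want to impose the initial data; and the weight $\cosh(Ay)$ in the $X_l$ norm is nothing but $\cosh(A(y-y_c))$ specialized to $y_c=0$. All the ingredients needed are therefore already in place.

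First, I would set $\phi_1(y,c)=\phi(y,c)/(u(y)-c)$, which (by the same computation as before) transforms the homogeneous Rayleigh equation into
\[
\bigl((u-c)^2\phi_1'\bigr)'=\alpha^2(u-c)^2\phi_1 .
\]
The prescribed data read $\phi_1(0,c)=1$ and $\phi_1'(0,c)=0$. Integrating once from $0$ to $y$ and then dividing by $(u-c)^2$ (which is nonvanishing on $[0,1]$ for $c\in B^l_{\epsilon_0}$, provided $\epsilon_0$ is small, because $\mathrm{Re}\,c\le u(0)$ and $u'\ge c_0>0$) and integrating once more gives the fixed-point equation
\[
\phi_1(y,c)=1+\alpha^2 T\phi_1(y,c),
\]
with the same operator $T=T_0\circ T_{2,2}$ as in Section 4.

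Second, I would invoke the $Y_l$ part of Lemma \ref{lem:T-bound}, namely $\|Tf\|_{Y_l}\le CA^{-2}\|f\|_{Y_l}$. Choosing $A$ large enough that $C\alpha^2/A^2<1/2$ (possible since by the running convention $A\ge C|\alpha|$ with $C$ at our disposal), the operator $I-\alpha^2 T$ is invertible on $Y_l$ by Neumann series. The constant function $1$ clearly has $\|1\|_{Y_l}\le 1$, so
\[
\phi_1=(I-\alpha^2 T)^{-1}1\in Y_l,\qquad \|\phi_1\|_{Y_l}\le C.
\]
Multiplying by $u(y)-c$, which is a smooth bounded multiplier on $[0,1]\times B^l_{\epsilon_0}$, preserves the space $Y_l$, and we obtain $\phi=(u-c)\phi_1\in Y_l$ with $\|\phi\|_{Y_l}\le C$. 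By construction $\phi$ solves (\ref{eq:Rayleigh-H}) and satisfies $\phi(0,c)/(u(0)-c)=1$, $(\phi/(u-c))'|_{y=0}=0$.

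There is no essential obstacle beyond what was already overcome for Lemma \ref{lem:Ray-D}; the one point worth checking is the nonvanishing of $u-c$ on $[0,1]$ for $c\in B^l_{\epsilon_0}$, which follows from the monotonicity $u'\ge c_0>0$ together with the fact that $c$ lies in the closed left half-disk of radius $\epsilon_0<1$ around $u(0)$, so that $|u(y)-c|\ge \max(c_0 y,\epsilon_0-c_0 y\cdot 0)\cdots$—more precisely, $\mathrm{Re}(u(y)-c)\ge u(y)-u(0)\ge c_0 y$ and $|\mathrm{Im}\,c|\le\epsilon_0$, which together keep $|u-c|$ bounded away from $0$ on the relevant set (and uniformly so on any compact subregion used in the contraction argument).
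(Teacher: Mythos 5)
Your proposal is correct and follows exactly the route the paper intends: the paper proves Lemma \ref{Lem:Ray-Bl} "in a similar way as in Lemma \ref{lem:Ray-D}", i.e.\ by reducing to the fixed-point equation $\phi_1=1+\al^2T\phi_1$ rooted at $y_c=u^{-1}(c_r)=0$ and inverting $I-\al^2T$ on $Y_l$ via the bound $\|Tf\|_{Y_l}\le CA^{-2}\|f\|_{Y_l}$ from Lemma \ref{lem:T-bound}. Your additional check that $u-c$ does not vanish on $[0,1]$ for $c\in B^l_{\epsilon_0}$ (since $\mathrm{Re}\,c\le u(0)$ and $\epsilon>0$) is a worthwhile detail the paper leaves implicit, and the rest matches the paper's argument.
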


\begin{lemma}\label{Lem:Ray-Br}
Let $c\in B^r_{\epsilon_0}$.
Then there exists a solution $\phi(x,c)\in Y_r$ to the Rayleigh equation
\beno
\left\{
\begin{array}{l}
\phi''-\al^2\phi-\frac{u''}{u-c}\phi=0,\\
\frac{\phi(1,c)}{u(1)-c}=1,\quad \Big(\frac{\phi(y,c)}{u(y)-c}\Big)'\big|_{y=1}=0.
\end{array}\right.
\eeno
Moreover, there holds
\begin{align*}
\|\phi\|_{Y_r}\leq C,
\end{align*}
where the constant $C$ may depend on $\al$.
\end{lemma}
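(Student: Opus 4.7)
The proof will parallel that of Lemma \ref{lem:Ray-D}, with the anchor point moved from $y_c$ in the interior to $y_c=1$. The key observation is that for $c\in B^r_{\epsilon_0}$, the convention (\ref{def:cr}) sets $c_r=u(1)$, so $y_c=u^{-1}(c_r)=1$, and the integral operator $T$ with lower limit $y_c=1$ is exactly the operator appearing in the reformulation of the Rayleigh equation at this endpoint.

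The plan is to set $\phi_1(y,c)=\phi(y,c)/(u(y)-c)$, so that the Rayleigh equation (\ref{eq:Rayleigh-H}) becomes the divergence-form equation
\[
\bigl((u-c)^2\phi_1'\bigr)'=\al^2\phi_1(u-c)^2.
\]
Integrating twice from $y=1$ and using the prescribed data $\phi_1(1,c)=1$ and $\phi_1'(1,c)=0$, one obtains
\[
\phi_1(y,c)=1+\int_1^y\frac{\al^2}{(u(y')-c)^2}\int_1^{y'}\phi_1(z,c)(u(z)-c)^2\,dz\,dy'=1+\al^2 T\phi_1(y,c),
\]
since the lower endpoints $1$ coincide with $y_c$ in this regime. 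Note that the second boundary condition is automatic from this integral representation: the inner integral vanishes at $y=1$, so $\phi_1'(1,c)=0$ regardless of the value of $(u(1)-c)^{-2}$.

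Now one applies the contraction principle in the Banach space $Y_r$. By Lemma \ref{lem:T-bound}, $\|T f\|_{Y_r}\le (C/A^2)\|f\|_{Y_r}$, and since by assumption $A\ge C|\al|$ with $C$ large enough depending only on $c_0$ and $\|u\|_{C^4}$, we have $\al^2\|T\|_{Y_r\to Y_r}\le 1/2$. Hence $I-\al^2 T$ is invertible on $Y_r$ and
\[
\phi_1=(I-\al^2 T)^{-1}1,
\]
which is well-defined because the constant function $1$ lies in $Y_r$ with bounded norm (each of $\|1\|_{X_r}$, $\|\partial_y 1\|_{X_r}$, $\|\partial_c 1\|_{X_r}$ is trivially bounded by a universal constant via the weight $\cosh(A(y-1))^{-1}\le 1$). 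This yields $\|\phi_1\|_{Y_r}\le C$, and since $\phi=(u-c)\phi_1$ with $u-c$ a smooth bounded multiplier on $[0,1]\times B^r_{\epsilon_0}$, the estimate $\|\phi\|_{Y_r}\le C$ follows.

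I do not expect any genuine obstacle here beyond bookkeeping: the entire mechanism of Lemma \ref{lem:Ray-D} transfers verbatim, with the sole changes being (i) that the reference point $y_c$ is now literally the endpoint $1$ rather than a floating interior point, and (ii) that the weight in the norm $Y_r$ is $\cosh(A(y-1))$, matching the convention used in Lemma \ref{lem:T-bound} to obtain the $C/A^2$ bound on $T$. The only minor point to verify carefully is that the formula $\partial_c Tf=2T_0\circ T_{2,3}f-2T_0\circ T_{1,2}f+T\partial_c f$ used inside the proof of Lemma \ref{lem:T-bound} remains valid when $y_c\equiv 1$ is independent of $c$; in fact this simplifies the computation, since the $c$-derivative of the limit of integration contributes nothing.
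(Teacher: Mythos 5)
Your argument is correct and is exactly the route the paper intends: the paper proves Lemma \ref{lem:Ray-D} by reducing to the fixed-point equation $\phi_1=1+\al^2T\phi_1$ and inverting $I-\al^2T$ via Lemma \ref{lem:T-bound}, and then simply declares the endpoint cases (Lemmas \ref{Lem:Ray-Bl} and \ref{Lem:Ray-Br}) to follow "in a similar way," which is precisely your adaptation with $y_c=1$ and the norm $Y_r$. Your added remarks (that the second boundary condition is automatic from the integral representation, and that $\partial_c y_c=0$ simplifies the $\partial_cT$ formula) are correct and harmless refinements of the same proof.
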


\begin{lemma}\label{lem:Ray-D0}
Let $c\in D_0$ and $y_c=u^{-1}(c)$.
Then there exists a solution $\phi(y,c)\in Y_C$ to the Rayleigh equation
\beno
\left\{
\begin{array}{l}
\phi''-\al^2\phi-\frac{u''}{u-c}\phi=0,\\
\phi(y_c,c)=0,\quad \phi'(y_c,c)=u'(y_c).
\end{array}\right.
\eeno
Moreover, there holds
\begin{align*}
\|\phi\|_{Y_C}\leq C,
\end{align*}
where the constant $C$ may depend on $\al$.
\end{lemma}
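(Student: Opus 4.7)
The plan is to follow the fixed-point argument of Lemma \ref{lem:Ray-D}, adapted to the real regime $c\in D_0$ where $u(y)-c$ vanishes at $y=y_c$. As before, I set $\phi_1=\phi/(u-c)$, which reduces the Rayleigh equation (\ref{eq:Rayleigh-H}) to the divergence form
\[((u-c)^2\phi_1')'=\alpha^2(u-c)^2\phi_1.\]
The stated initial data for $\phi$ translate into effective data for $\phi_1$: Taylor expanding
$\phi(y)=u'(y_c)(y-y_c)+\tfrac12\phi''(y_c)(y-y_c)^2+\cdots$ and
$u(y)-c=u'(y_c)(y-y_c)+\tfrac12 u''(y_c)(y-y_c)^2+\cdots$, and using the Rayleigh equation itself to evaluate $\phi''(y_c)=u''(y_c)$ (by applying L'H\^opital's rule to $u''\phi/(u-c)$ at $y_c$), I obtain $\phi_1(y_c,c)=1$ and $\phi_1'(y_c,c)=0$. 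In particular $(u(y)-c)^2\phi_1'(y,c)\to 0$ as $y\to y_c$, and integrating the divergence-form equation twice from $y_c$ yields exactly the same fixed-point identity
\[\phi_1(y,c)=1+\alpha^2 T\phi_1(y,c)\]
as in Lemma \ref{lem:Ray-D}.

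By Lemma \ref{lem:T-bound}, $\|Tf\|_{Y_C}\le CA^{-2}\|f\|_{Y_C}$. Since $A$ is chosen larger than $C|\alpha|$ with $C$ sufficiently large, $\|\alpha^2 T\|_{Y_C\to Y_C}\le 1/2$, so $I-\alpha^2 T$ is invertible on $Y_C$ via a Neumann series. Using $\|1\|_{Y_C}\le 1$, this produces a solution $\phi_1=(I-\alpha^2 T)^{-1}1\in Y_C$ with $\|\phi_1\|_{Y_C}\le C$.

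To recover $\phi$, I observe that $\phi=(u-c)\phi_1$ and that multiplication by $u-c$ is bounded on $Y_C$. Indeed, $u\in C^4$ gives bounded Leibniz coefficients $\partial_y^j\partial_c^k(u-c)$ for all $j+k\le 2$ (with $\partial_c(u-c)=-1$, $\partial_y\partial_c(u-c)=0$, and so on), while the weight $\cosh(A(y-y_c))$ defining $\|\cdot\|_{X_0}$ is left untouched. Hence $\|\phi\|_{Y_C}\le C\|\phi_1\|_{Y_C}\le C$. The required boundary data $\phi(y_c,c)=0$ and $\phi'(y_c,c)=u'(y_c)$ follow immediately from $\phi=(u-c)\phi_1$ and the effective conditions $\phi_1(y_c,c)=1$, $\phi_1'(y_c,c)=0$.

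I expect the main obstacle to be the reduction to the integral equation at $y=y_c$, where $(u-c)^2$ vanishes: one must check that the boundary term $(u(y_c)-c)^2\phi_1'(y_c,c)$ in the first integration genuinely equals zero, and that the subsequent integration of $\phi_1'$ is legitimate despite the potential singularity near $y_c$. The Taylor/L'H\^opital computation above handles precisely this point; once it is secured, the rest of the argument is a verbatim copy of Lemma \ref{lem:Ray-D}, with the space $Y$ replaced by $Y_C$ and the corresponding contraction estimate supplied by Lemma \ref{lem:T-bound}.
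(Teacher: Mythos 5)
Your proposal is correct and follows essentially the same route as the paper: both reduce to the fixed-point identity $\phi_1=1+\al^2 T\phi_1$ and invoke Lemma \ref{lem:T-bound} to invert $I-\al^2T$ on $Y_C$. The only cosmetic difference is that the paper derives the integral equation by integrating the Wronskian form $(\phi'(u-c)-\phi u')'=\al^2\phi(u-c)$ once with the given data at $y_c$, which sidesteps the Taylor/L'H\^opital check of the vanishing boundary term that your divergence-form derivation requires.
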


\begin{proof}
We rewrite the Rayleigh equation (\ref{eq:Rayleigh-H}) as
\beno
(\phi'(u-c)-\phi u')'=\al^2\phi(u-c).
\eeno
Using the boundary conditions $\phi(x_0)=0$ and $\phi'(x_0)=u'(x_0)$, we get
\beno
\phi'(y)(u(y)-c)-\phi(y) u'(x)=\int_{y_c}^y\al^2\phi(z)(u(z)-c)dz,
\eeno
which implies
\beno
\bigg(\frac{\phi(y,c)}{u(y)-u(y_c)}\bigg)'
=\frac{\al^2}{(u(y)-u(y_c))^2}\int_{y_c}^y\phi(z)(u(z)-c)dz.
\eeno
Let $\phi_1(y,c)=\frac{\phi(y,c)}{u(y)-c}$. So, $\phi_1(y_c)=1$ and
\beno
\phi_1(y,c)=1+\int_{y_c}^y\frac{\al^2}{(u(y')-u(y_c))^2}\int_{y_c}^{y'}\phi_1(z,c)(u(z)-u(y_c))^2dzdy',
\eeno
that is,
\ben\label{eq:Ray-inte}
\phi_1(y,c)=1+\al^2T\phi_1(y,c).
\een
Then Lemma \ref{lem:T-bound} ensures the existence of the solution in $Y_C$ to the equation (\ref{eq:Ray-inte})
with the bound $\|\phi_1\|_{Y_C}\le C$.
\end{proof}
\begin{remark}
\label{rem:Ray-positive}
Since $T$ is a positive operator, $\phi_1(y,c)$ for $c\in D_0$ given by (\ref{eq:Ray-inte}) satisfies
\beno
\phi_1(y,c)\geq 1.
\eeno
Moreover, the inverse of  $I-\al^2T$ exists in $X_0$ and has the infinite series representation
\beno
\big(I-\al^2T\big)^{-1}=\sum_{k=0}^\infty\al^{2k}T^k.
\eeno
\end{remark}

Now we are in a position to prove Proposition \ref{prop:Rayleigh-Hom}. We define
\beno
\phi(y,c)\eqdef\left\{
\begin{array}{l}
\phi^0(y,c)\quad \textrm{for } c\in D_0,\\
\phi^\pm(y,c)\quad \textrm{for } c\in D_{\epsilon_0},\\
\phi^l(y,c)\quad \textrm{for } c\in B_{\epsilon_0}^l,\\
\phi^r(y,c)\quad \textrm{for } c\in B_{\epsilon_0}^r,
\end{array}
\right.
\eeno
where $\phi^\pm$, $\phi^l, \phi^r, \phi^0$ are given by Lemma \ref{lem:Ray-D}, Lemma \ref{Lem:Ray-Bl}, Lemma \ref{Lem:Ray-Br} and Lemma \ref{lem:Ray-D0} respectively.

By our constructions, $\phi(y,c)\in C^1\big([0,1]\times \Omega_{\epsilon_0}\setminus D_0\big)$.
Notice that
$$Tf(y,c)=(y-y_c)^2\iint_{[0,1]^2}f(y_c+st(y-y_c),c)
K_0(s,t,y,c)dtds,$$
where
$$
K_0(s,t,y,c)=s\left(\frac{u(y_c+st(y-y_c),c)-c}{u(y_c+s(y-y_c),c)-c}\right)^2.
$$
Using the fact that $K_0\in C([0,1]^3\times\Omega_{\varepsilon_0})$ and $|K_0|\leq s$, we conclude that $T$
maps $C([0,1]\times\Omega_{\varepsilon_0})$ to $C([0,1]\times\Omega_{\varepsilon_0})$. Then by using the formula
$\phi_{1}(y,c)=\dfrac{}{}\sum\limits_{k=0}^{+\infty}\al^{2k}T^{k}1$
for $c\in \Omega_{\varepsilon_0}$ and that the convergence is uniform,
we conclude that $\phi_{1}(y,c)\in C([0,1]\times\Omega_{\varepsilon_0})$, thus $\phi(y,c)\in C\big([0,1]\times \Omega_{\epsilon_0}\big)$.
Furthermore, Remark \ref{rem:Ray-positive} ensures that
there exists $\epsilon_0>0$ such that for any $\epsilon\in[0,\epsilon_0)$ and $(y,c)\in [0,1]\times \Om_{\epsilon_0}$,
\beno
|\phi_1(y,c)|\ge \f12.
\eeno

%On the other hand, it holds that as $\epsilon\rightarrow 0$,
%\beno
%&&\phi^\pm(y_c,c+i\epsilon)\rightarrow 0,\quad (\phi^\pm)'(y_c,c+i\epsilon)\rightarrow u'(y_c),\\
%&&\phi^l(0,u(0)+i\epsilon e^{i\theta})\rightarrow 0,\quad (\phi^l)'(0,u(0)+i\epsilon e^{i\theta})\rightarrow u'(0),\\
%&&\phi^r(1,u(1)-i\epsilon e^{i\theta})\rightarrow 0,\quad (\phi^r)'(1,u(1)-i\epsilon e^{i\theta})\rightarrow u'(1),
%\eeno
%where $y_c=u^{-1}(c)$ for $c\in D_0$. This implies that as $\epsilon\rightarrow 0$,
%\begin{align*}
%&\phi^\pm(y,c+i\epsilon)\longrightarrow \phi(y,c),\\
%&\phi^l(y,u(0)+i\epsilon e^{i\theta})\longrightarrow \phi(y,u(0)),\\
%&\phi^r(y,u(1)-i\epsilon e^{i\theta})\longrightarrow \phi(y,u(1)),
%\end{align*}
%for $y\in [0,1], c\in D_0, \theta\in [\frac{\pi}{2},\frac{3\pi}{2}]$.
%This shows that  $\phi(y,c)\in C\big([0,1]\times \Omega_{\epsilon_0}\big)$, hence

Using the integral representation of $\phi_1(y,c)$, we have
\begin{align*}
|\phi_{1}(y,c)-1|\leq &\al^2\int_{y_c}^y\int_{y_c}^{z}|\phi_{1}(y',c)|\big|\frac{u(y')-c}{u(z)-c}\big|^2dy'dz\\
\leq & C|y-y_c|^2
\leq  C|u(y)-u(y_c)|^2\\
\leq & C|u(y)-c|^2.
\end{align*}
This completes the proof of Proposition \ref{prop:Rayleigh-Hom}.\ef

\section{Uniform estimates for the homogeneous Rayleigh equation}

The goal of this section is to establish some uniform estimates in wave number $\al$ for the solution $\phi(y,c)$ for $c\in D_0$
of the Rayleigh equation given by Lemma \ref{lem:Ray-D0}. Let $\phi_1(y,c)=\frac {\phi(y,c)} {u(y)-c}$ and $y_c=u^{-1}(c)$ for $c\in D_0$.

Without loss of generality, we always assume $\al\ge 1$ in the sequel.

%%%%%%%%%%%%%%%%%%%%%%%%%%%%%%

%%%%%%%%%%%%%%%%%%%%%%%%%%%%%%%%
\subsection{Uniform estimates in wave number}

\begin{proposition}\label{prop:phi}
There exists a constant $C$ independent of $\al$ such that
for any $(y,c)\in [0,1]\times D_0$,
\beno
\frac{\sinh \al(y-y_c)}{C\al}\leq \phi(y,c)\leq \frac{C\sinh \al(y-y_c)}{\al},\\
\frac{\sinh \al(y-y_c)}{C\al(y-y_c)}\leq \phi_1(y,c)\leq \frac{C\sinh \al(y-y_c)}{\al(y-y_c)}.
\eeno
Moreover, it holds that
\beno
|\partial_y^{\b}\partial_c^{\gamma}\phi_1(y,c)|\leq C \al^{\b+\g}\phi_1(y,c)
\eeno
for  $\b+\gamma\le 2$.
\end{proposition}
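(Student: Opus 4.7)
The strategy is to compare $\phi(y,c)$ with the explicit Couette model $\phi_*(y,c) := u'(y_c)\sinh(\alpha(y-y_c))/\alpha$, which solves $\phi_*''=\alpha^2\phi_*$ and equals $\phi$ exactly when $u''\equiv 0$, via the ansatz $\phi=\phi_*F$. Substituting into (\ref{eq:Rayleigh-H}) and using $\phi_*''=\alpha^2\phi_*$ yields the self-adjoint form $(\phi_*^2 F')'=(u''/(u-c))\phi_*^2 F$. L'H\^{o}pital applied to $\phi/\phi_*$ gives $F(y_c,c)=u'(y_c)/u'(y_c)=1$, and since $(u''/(u-c))\phi_*^2=O(z-y_c)$ near $y_c$, integrating twice from $y_c$ produces the Volterra equation
\beno
F(y,c)=1+\int_{y_c}^y \frac{1}{\phi_*(y',c)^2}\int_{y_c}^{y'}\frac{u''(z)}{u(z)-c}\,\phi_*(z,c)^2\,F(z,c)\,dz\,dy'.
\eeno

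The key step is to prove that $F$ is bounded above and bounded below by positive constants uniformly in $\alpha\ge 1$. A direct calculation shows the $L^\infty$ norm of the associated Volterra operator decays like $(1+\log\alpha)/\alpha$: after the substitution $s=\alpha(z-y_c)$, $L=\alpha(y'-y_c)$, the inner integral reduces (up to bounded factors) to $\int_0^L\sinh^2(s)/s\,ds\sim e^{2L}/(8L)$ for $L\gg 1$, which almost cancels $\phi_*(y',c)^{-2}\sim\alpha^2/(u'(y_c)^2\sinh^2(L))$ and leaves an $O(1/L)$ factor; integrating in $L$ then contributes the $\log\alpha/\alpha$. Consequently, for $\alpha$ above some threshold $\alpha_0$, Picard iteration gives $|F-1|\le 1/2$ and therefore $1/2\le F\le 3/2$. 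For $\alpha\in[1,\alpha_0]$, Lemma \ref{lem:Ray-D0} provides a continuous solution $\phi$, hence a continuous $F$, which is strictly positive (since both $\phi$ and $\phi_*$ share the sign of $y-y_c$ by the initial data and the monotonicity of $u$), and compactness on $[0,1]\times D_0\times[1,\alpha_0]$ yields uniform positive upper and lower bounds. The pointwise estimates on $\phi=\phi_*F$ and $\phi_1=\phi/(u-c)$ then follow at once from $c_0\le u'(y_c)\le\|u'\|_\infty$ and $|u(y)-c|\asymp|y-y_c|$.

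For the derivative bounds, differentiate the integral equation $\phi_1=1+\alpha^2 T\phi_1$ of Lemma \ref{lem:Ray-D0}: one has $\partial_y\phi_1=\alpha^2 T_{2,2}\phi_1$, and $\partial_c$ (respectively $\partial_y^2$, $\partial_y\partial_c$, $\partial_c^2$) produces similar expressions with additional terms arising from $\partial_c y_c=1/u'(y_c)$ (the relevant boundary contributions vanish because $(u(y_c)-c)^2=0$). Substituting the pointwise bound $\phi_1(z,c)\le C\sinh(\alpha(z-y_c))/(\alpha(z-y_c))$ just established and evaluating each integral via the change of variable $s=\alpha(z-y_c)$, one checks that each differentiation costs precisely one factor of $\alpha$ multiplied by $\phi_1(y,c)$, yielding the advertised $C\alpha^{\beta+\gamma}\phi_1$ estimate for $\beta+\gamma\le 2$.

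The main technical obstacle is the sharp uniform analysis of the Volterra operator defining $F$: the exponential cancellation between $\sinh^2(\alpha(z-y_c))$ in the numerator and $\sinh^2(\alpha(y'-y_c))$ in the denominator has to be extracted precisely so that the operator norm tends to zero as $\alpha\to\infty$. The logarithmic loss, coming from the $(z-y_c)^{-1}$ singularity in $u''/(u-c)$ integrated against $\phi_*^2$, is unavoidable but harmless; the intermediate range of $\alpha$ is handled cleanly by the compactness argument above.
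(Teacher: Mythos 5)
Your treatment of the two-sided bounds is essentially the paper's own argument in different notation: your $F=\phi/\phi_*$ is exactly $\widetilde{\phi}_1/u'(y_c)$ with $\widetilde{\phi}_1=\al\phi/\sinh\al(y-y_c)$, your Volterra equation is the one solved in the paper, and your operator-norm bound $(1+\log\al)/\al$ is precisely Lemma \ref{lem:S-bound}; the low-frequency range $\al\le\al_0$ is handled by Lemma \ref{lem:Ray-D0} in both versions. So the first two displayed inequalities are fine.

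The gap is in the derivative estimates, specifically the $c$-derivatives in the region $|y-y_c|\gtrsim 1/\al$. Differentiating $\phi_1=1+\al^2T\phi_1$ in $c$ does not give an explicit formula: it gives $\partial_c\phi_1=2\al^2T_0T_{2,3}\phi_1-2\al^2T_0T_{1,2}\phi_1+\al^2T\partial_c\phi_1$, i.e.\ another integral equation, so "substituting the pointwise bound and evaluating" implicitly requires $(I-\al^2T)^{-1}$ to preserve the weighted class $\{|h|\le M\al\phi_1\}$. It does not: using $T^k1\le (y-y_c)^{2k}/(2k)!$ and $\phi_1=(I-\al^2T)^{-1}1$ one gets
\[
(I-\al^2T)^{-1}(\al\phi_1)=\al\sum_{k\ge 0}(k+1)(\al^2T)^k1\ \sim\ \al\Big(\cosh X+\tfrac{X}{2}\sinh X\Big),\qquad X=\al(y-y_c),
\]
which is of size $\al X^{2}\phi_1$ rather than $\al\phi_1$; the loss $\al^2(y-y_c)^2$ can be as large as $\al^2$, destroying the claimed estimate for $\partial_c\phi_1$, $\partial_y\partial_c\phi_1$ and $\partial_c^2\phi_1$ (the pure $y$-derivatives are explicit and are genuinely fine). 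The naive iteration only closes on $\Sigma_\al=\{|y-y_c|\le 1/\al\}$, where $\al^2T$ is an honest contraction. To cover $|y-y_c|\ge 1/\al$ the paper introduces the logarithmic derivatives $f=\partial_y\phi_1/\phi_1$ and $g=\partial_c\phi_1/\phi_1$, observes $\partial_cf=\partial_yg$ together with the first-order identity $(\phi^2\partial_cf)'+2\phi_1^2u'f=0$, and integrates this to get $|\partial_yg|\le C/|y-y_c|^2$, hence $|g|\le C/|y-y_c|\le C\al$, and similarly at second order. Some cancellation mechanism of this kind is indispensable; without it your argument does not yield $|\partial_c^{\gamma}\partial_y^{\b}\phi_1|\le C\al^{\b+\gamma}\phi_1$ away from the critical layer.
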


The proof of the proposition needs the following two lemmas.

\begin{lemma}\label{lem:S-bound}
Let the operator $S$ be defined by
\beno
Sf(y,c)\eqdef\int_{y_c}^y\frac{dy'}{\sinh^2 \al(y'-y_c)}\int_{y_c}^{y'}\frac{\sinh^2 \al(z-y_c)u''(z)}{u(z)-u(y_c)}f(z,c)dz.
\eeno
Then there exists a constant $C$ independent of $\al$ such that
\beno
\|Sf\|_{L^{\infty}([0,1]\times D_0)}\leq C\frac{1+\ln \al}{\al}\|f\|_{L^{\infty}([0,1]\times D_0)}.
\eeno
Especially, there exists $M_0>0$ such that for $\al\ge M_0$,
\beno
\|Sf\|_{L^{\infty}([0,1]\times D_0)}\leq \f12\|f\|_{L^{\infty}([0,1]\times D_0)}.
\eeno
\end{lemma}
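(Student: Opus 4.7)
The plan is to reduce the double integral defining $Sf$ to a universal one-dimensional integral by rescaling, and then analyze the resulting kernel, which turns out to grow only logarithmically so that the $1/\al$ prefactor absorbs it.

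First, taking absolute values inside both integrals and using the monotonicity bound $|u(z) - u(y_c)| \ge c_0|z - y_c|$ together with the evenness of $\sinh^2$, I would obtain
\beno
|Sf(y,c)| \le \f{\|u''\|_{L^\infty}\|f\|_{L^\infty}}{c_0} \int_0^{|y - y_c|} \f{1}{\sinh^2(\al \tau)} \int_0^\tau \f{\sinh^2(\al \sigma)}{\sigma} d\sigma \, d\tau.
\eeno
After the rescaling $s = \al\sigma$, $t = \al\tau$, this reduces to
\beno
|Sf(y,c)| \le \f{C\|f\|_{L^\infty}}{\al} \int_0^{\al|y - y_c|} \f{F(t)}{\sinh^2 t} dt, \qquad F(t) \eqdef \int_0^t \f{\sinh^2 s}{s} ds,
\eeno
so the problem reduces to a purely one-dimensional estimate for the kernel $F(t)/\sinh^2 t$.

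Next, I would analyze this kernel. Near $t = 0$, $F(t) \sim t^2/2$ and $\sinh^2 t \sim t^2$, so the ratio is bounded. As $t \to +\infty$, writing $\sinh^2 s = (\cosh 2s - 1)/2$ and integrating by parts (or applying L'Hospital) gives $F(t) \sim e^{2t}/(8t)$, while $\sinh^2 t \sim e^{2t}/4$, so $F(t)/\sinh^2 t \sim 1/(2t)$. Consequently
\beno
\int_0^T \f{F(t)}{\sinh^2 t} dt \le C(1 + \ln(1 + T)).
\eeno
Since $|y - y_c| \le 1$ forces $T = \al|y - y_c| \le \al$, this yields the stated inequality $\|Sf\|_{L^\infty} \le C(1 + \ln \al)/\al \cdot \|f\|_{L^\infty}$. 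The second assertion follows immediately: since $(1 + \ln \al)/\al \to 0$ as $\al \to \infty$, there exists $M_0$ large enough so that $C(1 + \ln M_0)/M_0 \le 1/2$, and the bound holds for all $\al \ge M_0$.

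The main technical point---the step I would verify most carefully---is the cancellation of the $e^{2t}$ growth in the asymptotic analysis of $F(t)/\sinh^2 t$. A naive bound that factors out only $\sinh^2(\al(y - y_c))$ from the outer denominator and retains the full $\sinh^2(\al z)$ in the inner integrand would produce an exponentially large estimate; the improvement to mere logarithmic growth comes precisely from the fact that the inner integral is smaller than the outer denominator by a factor $\sim 1/t$ at infinity. Every other step---the pointwise kernel bound via monotonicity of $u$, the rescaling, and the choice of $M_0$---is routine.
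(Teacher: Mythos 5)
Your proposal is correct and follows essentially the same route as the paper: bound $|u(z)-u(y_c)|\ge c_0|z-y_c|$ using $u'\ge c_0$, rescale by $\al$, and reduce to the universal one-dimensional integral $\int_0^{\al|y-y_c|}\sinh^{-2}(t)\int_0^t \sinh^2(s)/s\,ds\,dt$, which grows only logarithmically. The only (cosmetic) difference is in evaluating that integral: the paper applies Fubini and computes $\int_t^\infty \sinh^{-2}y'\,dy'$ explicitly to get $\int_0^\al (1-e^{-2z})/z\,dz$, whereas you estimate the kernel $F(t)/\sinh^2 t\sim 1/(2t)$ directly; both give the same $C(1+\ln\al)/\al$ bound.
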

\begin{proof}
A direct calculation shows
\begin{align*}
\|Sf\|_{L^{\infty}([0,1]\times D_0)}\leq
&\bigg|\int_{y_c}^y\frac{dy'}{\sinh^2 \al(y'-y_c)}\int_{y_c}^{y'}\frac{\sinh^2 \al(z-y_c)u''(z)}{u(z)-u(y_c)}f(z,c)dz\bigg|\\
=&\bigg|\int_{y_c}^y\frac{dy}{\sinh^2 \al(y'-y_c)}\int_{y_c}^{y'}\frac{\sinh^2 \al(z-y_c)}{z-y_c}\frac{u''(z)}{\int_0^1u'(y_c+(z-y_c)t)dt}f(z,c)dz\bigg|\\
\leq &C\frac{1}{\al}\int_0^{\al|y-y_c|}\frac{dy}{\sinh ^2y'}\int_{0}^{|y'|}\frac{\sinh^2 z}{z}dz\|f\|_{L^{\infty}([0,1]\times D_0)}\\
\leq &C\frac{1}{\al}\int_0^{\al}\frac{\sinh^2 z}{z}dz\int_{|z|}^{\infty}\frac{dy'}{\sinh ^2y'}\|f\|_{L^{\infty}([0,1]\times D_0)}\\
=&C\frac{1}{\al}\int_0^{\al}\frac{1-e^{-2z}}{z}dz\|f\|_{L^{\infty}([0,1]\times D_0)}\\
\leq& C\frac{1+\ln \al}{\al}\|f\|_{L^{\infty}([0,1]\times D_0)}.
\end{align*}
The second statement is direct.
\end{proof}

\begin{lemma}\label{lem:T-sigma}
Given $\al\ge 1$, we denote $\Sigma_{\al}\triangleq\big\{(y,c):~|y-y_c|\leq \frac{1}{\al}, c=u(y_c)\big\}$.
Then there exists a constant $C$ independent of $\al$ such that
\beno
&&\|T_{0}f\|_{L^{\infty}(\Sigma_{\al})}\leq \frac{1}{\al}\|f\|_{L^{\infty}(\Sigma_{\al})},\\
&&\|T_{k,k}f\|_{L^{\infty}(\Sigma_{\al})}\leq \frac{C}{\al}\|f\|_{L^{\infty}(\Sigma_{\al})}.
\eeno
Moreover, for $\b+\g\leq 2$,
\beno
\|\partial_{y}^{\b}\partial_c^{\g}T_{k,k+1}f\|_{L^{\infty}(\Sigma_{\al})}\leq C\sum_{\b_1+\g_1\le \b+\g}\|\partial_x^{\b_1}\partial_c^{\g_1}f\|_{L^{\infty}(\Sigma_{\al})}.
\eeno
\end{lemma}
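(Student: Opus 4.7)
All three bounds follow from explicit integral manipulations exploiting two facts available on $\Sigma_\al$: first, $|y-y_c|\le 1/\al$, so any prefactor of $(y-y_c)$ already produces the required $1/\al$; second, since $c=u(y_c)$ and $u'\ge c_0>0$ with $u\in C^4$, all potentially singular factors $(u-c)^{-j}$ can be cancelled by the change of variable $z=y_c+t(y-y_c)$, $t\in[0,1]$. I would treat the three operators in the order they appear.

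\textbf{Step 1 ($T_0$ and $T_{k,k}$).} For $T_0$ the estimate is immediate: $|T_0f(y,c)|\le|y-y_c|\,\|f\|_{L^\infty(\Sigma_\al)}\le \al^{-1}\|f\|_{L^\infty(\Sigma_\al)}$. For $T_{k,k}$, the change of variable $z=y_c+t(y-y_c)$ gives
\begin{align*}
T_{k,k}f(y,c)=(y-y_c)\int_0^1\left(\frac{u(y_c+t(y-y_c))-c}{u(y)-c}\right)^k f(y_c+t(y-y_c),c)\,dt.
\end{align*}
Writing $u(y_c+\tau(y-y_c))-c=\tau(y-y_c)\int_0^1u'(y_c+s\tau(y-y_c))\,ds$, the ratio reduces to $t^k$ times a bounded function of $(t,y,c)$ (since $u'\ge c_0$), which gives the required bound $C|y-y_c|\|f\|_{L^\infty(\Sigma_\al)}\le C\al^{-1}\|f\|_{L^\infty(\Sigma_\al)}$.

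\textbf{Step 2 ($T_{k,k+1}$ and its derivatives).} I would use the explicit representation already recorded in the proof of Lemma \ref{lem:T-bound}, namely
\begin{align*}
T_{k,k+1}f(y,c)=\int_0^1 \mathcal{K}(t,y,c)\,f(y_c+t(y-y_c),c)\,t^k\,dt,
\end{align*}
where
\begin{align*}
\mathcal{K}(t,y,c)=\frac{\left(\int_0^1 u'(y_c+st(y-y_c))\,ds\right)^k}{\left(\int_0^1 u'(y_c+s(y-y_c))\,ds\right)^{k+1}}.
\end{align*}
Because $u\in C^4([0,1])$, $u'\ge c_0$, and $y_c=u^{-1}(c)$ has bounded derivatives $y_c'(c)=1/u'(y_c)$, the kernel $\mathcal{K}$ and all its partials up to order $2$ in $(y,c)$ are uniformly bounded on $\Sigma_\al$. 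Differentiating under the integral sign and using the chain rule on the composition $g(y,c)\eqdef f(y_c+t(y-y_c),c)$, with $\pa_y g=t(\pa_yf)$ and $\pa_c g=\frac{1-t}{u'(y_c)}(\pa_yf)+\pa_cf$, one gets at each derivative order a sum of terms each bounded by $\|\pa_y^{\b_1}\pa_c^{\g_1}f\|_{L^\infty(\Sigma_\al)}$ with $\b_1+\g_1\le\b+\g$. The evaluation points $y_c+t(y-y_c)$ stay in $\Sigma_\al$ since $|y_c+t(y-y_c)-y_c|=t|y-y_c|\le 1/\al$.

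\textbf{Main obstacle.} No deep difficulty is expected; the main care lies in the bookkeeping of the chain rule for $\pa_c$, because $y_c$ depends on $c$ through $u^{-1}$. One must check that the $1/u'(y_c)$ factors produced by $\pa_cy_c$ remain uniformly bounded (this uses $u'\ge c_0$) and that the domain of evaluation of $\pa_y^{\b_1}\pa_c^{\g_1}f$ continues to lie in $\Sigma_\al$ so that the right-hand side $L^\infty$ norms make sense. Both checks are direct from the definition of $\Sigma_\al$, so the proof reduces to careful but routine computation.
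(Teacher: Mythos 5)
Your proposal is correct and follows essentially the same route as the paper: the first two bounds come from the trivial length estimate $|y-y_c|\le 1/\al$ combined with the monotonicity of $u$, and the derivative bounds for $T_{k,k+1}$ use exactly the kernel representation $T_{k,k+1}f(y,c)=\int_0^1 f(y_c+t(y-y_c),c)\,\mathcal{K}(t,y,c)\,t^k\,dt$ that the paper invokes (and for which it omits the "direct calculation"). Your Step 2 merely fills in the chain-rule bookkeeping that the paper leaves to the reader, and it does so correctly.
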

\begin{proof}
For $(y,c)\in \Sigma_{\al}$, we have
\begin{align*}
|T_0f(y,c)|=&\bigg|\int_{y_c}^{y}f(z,c)dz\bigg|\leq |y-y_c|\|f\|_{L^{\infty}(\Sigma_{\al})}
\leq  \frac{1}{\al}\|f\|_{L^{\infty}(\Sigma_{\al})},
\end{align*}
and
\begin{align*}
|T_{k,k}f(y,c)|
=&\bigg|\frac{1}{(u(y)-c)^k}
\int_{y_c}^y(u(z)-c)^kf(z,c)dz\bigg|\\
\leq &C|y-y_c|\|u'\|_{L^{\infty}}^k\|f\|_{L^{\infty}(\Sigma_{\al})}
\leq \frac{C}{\al}\|f\|_{L^{\infty}(\Sigma_{\al})}.
\end{align*}
Using the formula
\beno
T_{k,k+1}f(y,c)=\int_0^1f(y_c+t(y-y_c),c)\frac{\big(\int_0^1u'(y_c+st(y-y_c))ds\big)^k}{\big(\int_0^1u'(y_c+s(y-y_c))ds\big)^{k+1}}t^kdt,
\eeno
a direct calculation gives the third inequality of the lemma. We omit the details here.
\end{proof}

Now let us turn to the proof of Proposition \ref{prop:phi}.

\begin{proof}

Let $M_0$ be given by Lemma \ref{lem:S-bound}.
The case of $\al\le M_0$ is a direct consequence of Lemma \ref{lem:Ray-D0}, which implies that
\beno
&1\leq |\phi_1(y,c)|\leq C,\quad C^{-1}|y-y_c|\leq |\phi(y,c)|\leq C|y-y_c|,\quad\big|\partial_y^{\b}\partial_{c}^{\gamma}\phi_1(y,c)\big|\leq C
\eeno
for any $(y,c)\in [0,1]\times D_0$, where the constant $C$ depends on $M_0$.

Thus, it suffices to consider the case of $\al\ge M_0$. It is easy to check that the solution $\phi(y,c)$ satisfies
\beno
\Big(\sinh ^2\al(y-y_c)\big(\frac{\phi(y,c)}{\sinh \al(y-y_c)}\big)'\Big)'=\sinh \al(y-y_c)\frac{u''}{u(y)-u(y_c)}\phi(y,c).
\eeno
Let $\tphi_1(y,c)=\frac{\al\phi(y,c)}{\sinh \al(y-y_c)}$, then $\tphi_1(y,c)$ satisfies
\beno
\tphi_1(y,c)=u'(y_c)+\int_{y_c}^y\frac{dy'}{\sinh^2 \al(y'-y_c)}\int_{y_c}^{y'}\frac{\sinh^2 \al(z-y_c)u''(z)}{u(z)-u(y_c)}\tphi_1(z,c)dz.
\eeno
Lemma \ref{lem:S-bound} ensures that
\beno
\tphi_1(y,c)=(I-S)^{-1}(u'(y_c))
\eeno
with the bound
\beno
C^{-1}\le \tphi_1(y,c)\le C\quad \textrm{for any}\quad (y,c)\in [0,1]\times D_0,
\eeno
which implies the first part of the proposition.

Next we prove the derivative estimates. Thanks to $u'(y)\ge c_0$, it holds that for any $y,z$ with $|y-y_c|\leq |z-y_c|$,
\beno
(u(y)-c)^2\leq (u(z)-c)^2,\quad c=u(y_c).
\eeno
Thus, for $(y,c)\in \Sigma_{\al}$,
\begin{align*}
|Tf(y,c)|\leq&\int_{y_c}^y\frac{1}{(u(z)-c)^2}\int_{y_c}^{z}(u(y')-c)^2\|f\|_{L^{\infty}(\Sigma_{\al})}dy'dz\\
\leq & \int_{y_c}^y(z-y_c)dz\|f\|_{L^{\infty}(\Sigma_{\al})}\\
\leq & \frac{1}{2\al^2}\|f\|_{L^{\infty}(\Sigma_{\al})}.
\end{align*}
This means that $\al^2T$ is a contraction mapping from $L^{\infty}(\Sigma_{\al})$ to $L^{\infty}(\Sigma_{\al})$. Thus,
\beno
\|\phi_1\|_{L^{\infty}(\Sigma_{\al})}\leq \|(I-\al^2T)^{-1}1\|_{L^{\infty}(\Sigma_{\al})}\leq 2.
\eeno
On the other hand, we have
\beno
&&\partial_{y}\phi_1(y,c)=\al^2T_{2,2}\phi_1(y,c),\\
&&\partial_{yy}\phi_1(y,c)=-2\al^2u'(y)T_{2,3}\phi_1(y,c)+\phi(y,c),
\eeno
then it follows from Lemma \ref{lem:T-sigma} that for any $(y,c)\in \Sigma_{\al}$,
\begin{align*}
|\partial_{y}\phi_1(y,c)|\leq C\al,\quad |\partial_{yy}\phi_1(y,c)|\leq C\al^2.
\end{align*}

Now we deal with the derivative estimate in $c$ variable. Recalling that
\beno
&&\phi_1(y,c)=1+\al^2T\phi_1(y,c),\\
&&\partial_{c}Tf=2T_0T_{2,3}f-2T_0T_{1,2}f+T\partial_cf,
\eeno
we obtain
\begin{align*}
&\partial_c\phi_1(y,c)=2\al^2T_0T_{2,3}\phi_1(y,c)-2\al^2T_0T_{1,2}\phi_1(y,c)+\al^2T\partial_c\phi_1(y,c),\\
&\partial_y\partial_c\phi_1(y,c)=2\al^2T_{2,3}\phi_1(y,c)-2\al^2T_{1,2}\phi_1(y,c)+\al^2T_{2,2}\partial_c\phi_1(y,c),
\end{align*}
and
\begin{align*}
\partial_c^2\phi_1(y,c)=&2\al^2\pa_cT_0T_{2,3}\phi_1(y,c)-2\al^2\pa_cT_0T_{1,2}\phi_1(y,c)+\al^2\partial_cT\partial_c\phi_1(y,c)\\
=&2\al^2T_0\partial_cT_{2,3}\phi_1(y,c)-2\al^2T_0\partial_cT_{1,2}\phi_1(y,c)+\frac{\al^2}{3u'(y_c)}\\
&+2\al^2T_0T_{2,3}\partial_c\phi_1(y,c)-2\al^2T_0T_{1,2}\partial_c\phi_1(y,c)+\al^2T\partial_c^2\phi_1(y,c),
\end{align*}
which along with Lemma \ref{lem:T-sigma} ensure that
\begin{align*}
\|\partial_c\phi_1\|_{L^{\infty}(\Sigma_{\al})}=&
\|(I-\al^2T)^{-1}
(2\al^2T_0T_{2,3}\phi_1(y,c)-2\al^2T_0T_{1,2}\phi_1)\|_{L^{\infty}(\Sigma_{\al})}\\
\leq & C\big(\al^2\|T_0T_{2,3}\phi_1\|_{L^{\infty}(\Sigma_{\al})}
+\al^2\|T_0T_{1,2}\phi_1\|_{L^{\infty}(\Sigma_{\al})}\big)\\
\leq &C\al\|\phi_1\|_{L^{\infty}(\Sigma_{\al})}\leq C\al,
\end{align*}
and
\begin{align*}
\|\partial_y\partial_c\phi_1\|_{L^{\infty}(\Sigma_{\al})}+\|\partial_c^2\phi_1\|_{L^{\infty}(\Sigma_{\al})}\leq C\al^2.
\end{align*}

Finally, let us consider the case of $|y-y_c|\geq \frac{1}{\al}$.
Using the fact that
\beno
&&|\phi_1(y,c)|\leq C\frac{\sinh \al(y-y_c)}{\al(y-y_c)}\leq C\cosh\al(y-y_c),\\
&&C\sinh \al |y-y_c|\geq \cosh\al(y-y_c)-1\quad for\quad |y-y_c|\geq \frac{1}{\al},
\eeno
we infer that
\begin{align*}
|\partial_{y}\phi_1(y,c)|=&\bigg|\frac{\al^2}{(u(y)-u(y_c))^2}\int_{y_c}^y\phi_1(z)(u(z)-u(y_c))^2dz\bigg|\\
\leq &C\bigg|\frac{\al}{(y-y_c)^2}\int_{y_c}^y\sinh\al(z-y_c)(z-y_c)dz\bigg|\\
\leq &C\frac{|\cosh\al(y-y_c)-1|}{|y-y_c|}
\leq C\al\phi_1(y,c).
\end{align*}
Using the equation
\beno
\phi_1''+\frac{2u'}{u-c}\phi_1'=\al^2\phi_1,
\eeno
we obtain
\beno
|\partial_y^2\phi_1|\leq C\al^2 \phi_1.
\eeno

To estimate the derivative in $c$ variable, we introduce $f=\frac{\partial_y\phi_1}{\phi_1}$ and $g=\frac{\partial_c\phi_1}{\phi_1}$.
It is easy to see that $\partial_cf=\partial_yg$ and
\beno
(\phi^2\partial_cf)'+2\phi_1^2u'f=0,
\eeno
which gives
\beno
\partial_cf(y,c)=\partial_yg(y,c)=\frac{-2}{\phi(y,c)^2}\int_{y_c}^y\phi_1^2(z,c)u'(z)f(z,c)dz=\frac{-2}{\phi(y,c)^2}T_0(\phi_1^2u'f),
\eeno
For $|y-y_c|\geq \frac{1}{\al}$, $\phi_1^2-1\sim \phi_1^2$ and $\partial_y\phi_1(y,c)$ has the same sign as $y-y_c$. Then
\begin{align*}
|\partial_yg(y,c)|=|\partial_cf(y,c)|\leq& \frac{C}{\phi(y,c)^2}\int_{y_c}^y\phi_1(z,c)\partial_z\phi_1(z,c)dz\\
\leq& C\frac{\phi_1(y,c)^2-1}{\phi(y,c)^2}\leq\frac{C}{|y-y_c|^2},
\end{align*}
from which, it follows that
\beno
|g(y,c)|\leq \frac{C}{|y-y_c|}\leq C\al,
\eeno
and thus,
\beno
|\partial_c\phi_1|\leq C\al \phi_1,\quad |\partial_y\partial_c\phi_1|\leq C\al^2 \phi_1.
\eeno
We have
\beno
\partial_c^2f(y,c)=\partial_y\partial_cg(y,c)
=-2\int_{y_c}^y\partial_c\Big(\frac{\phi_1(z,c)^2}{\phi(y,c)^2}f(z,c)\Big)u'(z)dz,
\eeno
which implies
\begin{align*}
|\partial_c^2f(y,c)|\leq \frac{C\al}{|y-y_c|^2}.
\end{align*}
Thus, we obtain
\beno
|\partial_cg(y,c)|\leq \frac{C\al}{|y-y_c|}\leq C\al^2,
\quad |\partial_c^2\phi_1|\leq C\al^2\phi_1.
\eeno
The proof is finished.
\end{proof}

\begin{lemma}\label{lem:phi1-est}
There exists a constant $C$ independent of $\al$ such that
for any $(y,c)\in [0,1]\times D_0$,
\beno
&\frac{1}{C}\Big(\frac{\sinh \al(y-y_c)}{\al(y-y_c)}-1\Big)\leq \phi_1(y,c)-1\leq C\Big(\frac{\sinh \al(y-y_c)}{\al(y-y_c)}-1\Big),\\
&|\phi_1(y,c)-1|\leq C\al^2(y-y_c)^2\phi_1(y,c),
\eeno
where $c=u(y_c)$.
\end{lemma}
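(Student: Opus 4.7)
The plan is to exploit the integral equation $\phi_1-1=\al^2 T\phi_1$ from the proof of Lemma~\ref{lem:Ray-D0}, the positivity $\phi_1\ge 1$ from Remark~\ref{rem:Ray-positive}, and the two-sided pointwise bound $\phi_1(y,c)\sim\frac{\sinh\al(y-y_c)}{\al(y-y_c)}$ from Proposition~\ref{prop:phi}. I handle the two stated inequalities separately.

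For the second inequality, I will use that $x\mapsto\frac{\sinh x}{x}$ is increasing in $|x|$, so Proposition~\ref{prop:phi} yields $\phi_1(z,c)\le C\phi_1(y,c)$ whenever $|z-y_c|\le|y-y_c|$. Inserting this bound into $T\phi_1$ and exploiting $u'\ge c_0>0$ to ensure $(u(z)-c)^2\le(u(y')-c)^2$ for $|z-y_c|\le|y'-y_c|$, I pull $\phi_1(y,c)$ out of the integral and reduce to the elementary estimate
\[
\int_{y_c}^y\frac{1}{(u(y')-c)^2}\int_{y_c}^{y'}(u(z)-c)^2\,dz\,dy'\le C(y-y_c)^2,
\]
which gives $\al^2 T\phi_1(y,c)\le C\al^2(y-y_c)^2\phi_1(y,c)$, as desired.

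For the two-sided first inequality, I will split based on the size of $\al|y-y_c|$. In the near regime $\al|y-y_c|\le M$, Proposition~\ref{prop:phi} gives $1\le\phi_1\le C$, so $\phi_1-1=\al^2 T\phi_1$ is comparable to $\al^2 T(1)$; a direct calculation using $c_0\le u'\le\|u'\|_\infty$ shows $T(1)(y,c)\sim (y-y_c)^2$, hence $\phi_1-1\sim\al^2(y-y_c)^2$. The Taylor series $\frac{\sinh x}{x}-1=\sum_{k\ge 1}\frac{x^{2k}}{(2k+1)!}$ then shows $\frac{\sinh\al(y-y_c)}{\al(y-y_c)}-1\sim\al^2(y-y_c)^2$ when $\al|y-y_c|\le M$, so the two sides of the target agree up to constants. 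In the far regime $\al|y-y_c|\ge M$, I choose $M$ large enough that $\frac{\sinh M}{CM}\ge 2$, where $C$ is the constant from Proposition~\ref{prop:phi}; then $\phi_1\ge 2$, so $\phi_1-1\ge\phi_1/2$, while $\frac{\sinh\al(y-y_c)}{\al(y-y_c)}-1\ge\frac12\frac{\sinh\al(y-y_c)}{\al(y-y_c)}$. Both $\phi_1-1$ and $\frac{\sinh\al(y-y_c)}{\al(y-y_c)}-1$ are therefore comparable to $\phi_1\sim\frac{\sinh\al(y-y_c)}{\al(y-y_c)}$.

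The main obstacle is fixing the threshold $M$ after Proposition~\ref{prop:phi}'s implicit constant is determined and verifying the near-regime lower bound $T(1)(y,c)\ge(y-y_c)^2/C$ directly from $u'\ge c_0$; neither step is genuinely deep, but the two regimes must be glued with absolute constants independent of $\al$.
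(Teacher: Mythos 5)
Your proposal is correct, and both inequalities rest on the same inputs as the paper's proof (the integral equation $\phi_1-1=\al^2T\phi_1$, positivity of $T$, and the two-sided bound $\phi_1\sim\frac{\sinh\al(y-y_c)}{\al(y-y_c)}$ from Proposition \ref{prop:phi}); your treatment of the second inequality is essentially the paper's. For the first inequality, however, the mechanics differ. The paper substitutes the two-sided bound $\phi(z,c)\sim\frac{\sinh\al(z-y_c)}{\al}$ and $u'\sim\mathrm{const}$ directly into the iterated integral and evaluates it in closed form via the identity $\int_0^{X}\big(\frac{\cosh z}{z}-\frac{\sinh z}{z^2}\big)dz=\frac{\sinh X}{X}-1$, so both the upper and lower bounds drop out of a single exact computation with no case analysis. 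You instead split at a threshold $M$ in $\al|y-y_c|$, matching both sides to $\al^2(y-y_c)^2$ in the near regime (via $T(1)\sim(y-y_c)^2$ and the Taylor series of $\sinh x/x$) and to $\phi_1$ itself in the far regime. Your route avoids having to spot the exact antiderivative, at the cost of the regime-gluing bookkeeping; the paper's is shorter but relies on recognizing that the comparison integral integrates exactly to the target expression. Both are valid, and your constants are visibly independent of $\al$ since $M$ depends only on the constant of Proposition \ref{prop:phi}.
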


\begin{proof} We have
\begin{align*}
\phi_1(y,c)-1=&\int_{y_c}^y\frac{\al^2}{(u(y')-u(y_c))^2}\int_{y_c}^{y'}\phi_1(z)(u(z)-u(y_c))^2dzdy'\\
\geq&C^{-1}\int_{y_c}^y\frac{\al}{(y'-y_c)^2}\int_{y_c}^{y'}\sinh \al(z-y_c)(z-y_c)dzdy'\\
=&C^{-1}\int_{0}^{\al(y-y_c)}\frac{\cosh z}{z}-\frac{\sinh z}{z^2}dz\\
=&C^{-1}\Big(\frac{\sinh \al(y-y_c)}{\al(y-y_c)}-1\Big)\geq 0,
\end{align*}
and
\begin{align*}
\phi_1(y,c)-1=&\int_{y_c}^y\frac{\al^2}{(u(y')-u(y_c))^2}\int_{y_c}^{y'}\phi_1(z)(u(z)-u(y_c))^2dzdy'\\
\leq&C\int_{y_c}^y\frac{\al}{(y'-y_c)^2}\int_{y_c}^{y'}\sinh \al(z-y_c)(z-y_c)dzdy'\\
=&C\int_{0}^{\al(y-y_c)}\frac{\cosh z}{z}-\frac{\sinh z}{z^2}dz\\
=&C\Big(\frac{\sinh \al(y-y_c)}{\al(y-y_c)}-1\Big).
\end{align*}
This gives the first inequality.

Thanks to $\pa_y\phi_1(y,c)\geq0$ for $y\geq y_c$ and $\pa_y\phi_1(y,c)\leq0$ for $y\leq y_c$, we have
\beno
\phi_1(y,c)\geq \phi_1(z,c),\quad |u(z)-c|\leq |u(y)-c|
\eeno
for $|y-y_c|\geq |z-y_c|$.
Then we get
\begin{align*}
|\phi_1(y,c)-1|\leq& \al^2\int_{y_c}^y\int_{y_c}^{z}\phi_1(y',c)\frac{(u(y')-c)^2}{(u(z)-c)^2}dy'dz\\
\leq & C\al^2|y-y_c|^2\phi_1(y,c)\leq C\al^2|u(y)-u(y_c)|^2\phi_1(y,c).
\end{align*}
The second inequality is proved.
\end{proof}

\begin{remark}\label{rem:phi1}
We also need a more precise estimate for $\phi_1(y,c)-1$:
\begin{align*}
\phi_1(y,c)-1=&\al^2(y-y_c)^2\iint_{[0,1]^2}\phi_1(y_c+st(y-y_c),c)
K(y_c+s(y-y_c),y_c,t)sdtds\\
=&\al^2(y-y_c)^2\mathcal{T}(\phi_1)(y,c),
\end{align*}
where $y_c=u^{-1}(c)$ and
\beno
K(y,y_c,t)=\frac{\big(\int_0^1u'(y_c+st(y-y_c))ds\big)^2}{\big(\int_0^1u'(y_c+s(y-y_c))ds\big)^2}t^2.
\eeno
Using the fact that $\|K\|_{C^2_{y,y_c,t}}\leq C$ and $\phi_1(y,c)\geq \phi_1(z,c)$ for $|y-y_c|\geq |z-y_c|$, we get by Proposition \ref{prop:phi} that
\beno
&&|\mathcal{T}(\phi_1)(y,c)|\leq C\phi_1(y,c),\\
&&|\partial_y\mathcal{T}(\phi_1)(y,c)|+|\partial_{c}\mathcal{T}(\phi_1)(y,c)|\leq C\al\phi_1(y,c),\\
&&|\partial_y^2\mathcal{T}(\phi_1)(y,c)|
+|\partial_{c}^2\mathcal{T}(\phi_1)(y,c)|
+|\partial_{yc}\mathcal{T}(\phi_1)(y,c)|\leq C\al^2\phi_1(y,c).
\eeno

\end{remark}

%%%%%%%%%%%%%%%%%%%%%%%%
\subsection{Uniform estimates for good derivative}

\begin{proposition}\label{prop:phi-good}
There exists a constant $C$ independent of $\al$ such that
for any $(z,c)\in D_0\times D_0$,
\begin{align*}
&\big|(\partial_z+\partial_c)\phi_1(u^{-1}(z),c)\big|\leq C\al^2(y-y_c)^2\phi_1,\\
&|(\partial_z+\partial_c)^2\phi_1(u^{-1}(z),c)|\leq
C\al^2(y-y_c)^2\cosh\al(y-y_c),
\end{align*}
where $y_c=u^{-1}(c)$ and $y=u^{-1}(z)$.
\end{proposition}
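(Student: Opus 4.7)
The plan is to exploit the integral representation from Remark \ref{rem:phi1},
\[
\phi_1(y,c) - 1 = \al^2 (y-y_c)^2 \mathcal{T}(\phi_1)(y,c),
\]
and to leverage the fact that the good vector field $\partial_z + \partial_c$ (with $z = u(y)$, $c = u(y_c)$) nearly preserves the factor $(y - y_c)^2$. Indeed,
\[
(\partial_z + \partial_c)(y - y_c) = \frac{1}{u'(y)} - \frac{1}{u'(y_c)} = O(|y - y_c|),
\]
so differentiating the prefactor $(y-y_c)^2$ produces only $O((y-y_c)^2)$, not $O(|y-y_c|)$. This is the mechanism behind the gain of a factor $(y - y_c)^2$ over the naive bound $|\partial_y\phi_1|+|\partial_c\phi_1| \leq C\al\phi_1$ from Proposition \ref{prop:phi}.

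For the first-order estimate, I would apply $\partial_z + \partial_c$ to the integral representation, obtaining
\[
(\partial_z + \partial_c)\phi_1 = \al^2 (\partial_z + \partial_c)[(y-y_c)^2] \cdot \mathcal{T}(\phi_1) + \al^2(y-y_c)^2(\partial_z + \partial_c)\mathcal{T}(\phi_1).
\]
The first term is immediately bounded by $C\al^2(y-y_c)^2 \phi_1$ via Remark \ref{rem:phi1}. For the second, expand
\[
(\partial_z + \partial_c)\mathcal{T}(\phi_1) = \iint_{[0,1]^2} \bigl\{(\partial_z + \partial_c)[\phi_1(y_c + stv, c)]\cdot K + \phi_1(y_c+stv,c)(\partial_z + \partial_c)K\bigr\} s\, dt\, ds,
\]
with $v = y - y_c$. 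The crucial observation is that in the Couette limit $u'' \equiv 0$ one has $K \equiv t^2$ and $(\partial_z + \partial_c)K \equiv 0$; for general monotone $u$, a direct Taylor expansion at $v=0$ shows $|(\partial_z + \partial_c)K(y_c+sv, y_c, t)| \leq C s|v|$, so the $K$-derivative contribution is $O(|v|\phi_1)$. Feeding the $\phi_1$-derivative piece into the bootstrap ansatz $|(\partial_z+\partial_c)\phi_1| \leq K_0\al^2 v^2\phi_1$ then closes the estimate for $\al|v|$ smaller than a fixed universal constant. In the complementary regime I combine the a priori bound $|(\partial_z + \partial_c)\phi_1| \leq C\al\phi_1$ from Proposition \ref{prop:phi} with the inequality $\al\phi_1 \leq C\al^2(y-y_c)^2\phi_1$, valid once $\al(y-y_c)^2$ exceeds a constant, using the boundedness of $\phi_1$ on any bounded range of $\al|v|$ to bridge the transition.

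The second-order estimate would follow by iterating the same scheme: $(\partial_z + \partial_c)^2$ applied to the integral representation still produces the prefactor $(y-y_c)^2$ out front, together with lower-order terms already controlled by the first-order estimate. The extra factor $\cosh\al(y-y_c)$ (in place of $\phi_1$) arises because $(\partial_z+\partial_c)^2 K$ does not vanish on the diagonal, producing a contribution that scales like $(1 + \al|y-y_c|)\phi_1$, which is comparable to $\cosh\al(y-y_c)$. The main technical challenge will be the careful bookkeeping of the multiple remainder terms and the verification that the bootstrap closes uniformly across the small-to-large $\al|y - y_c|$ transition; this relies on the uniform weighted estimates established in Proposition \ref{prop:phi}, Lemma \ref{lem:phi1-est}, and Remark \ref{rem:phi1}.
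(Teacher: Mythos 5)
Your overall strategy --- differentiating the representation $\phi_1-1=\al^2(y-y_c)^2\mathcal{T}(\phi_1)$ with the good derivative and exploiting $(\partial_z+\partial_c)(y-y_c)=O(|y-y_c|)$ --- does identify the right mechanism, but the way you close the estimate leaves a genuine gap in an intermediate range of $|y-y_c|$. Your bootstrap closes only for $\al|y-y_c|\le\delta$: feeding the ansatz $|(\partial_z+\partial_c)\phi_1|\le K_0\al^2(y-y_c)^2\phi_1$ back through the prefactor $\al^2(y-y_c)^2$ produces a term of size $K_0\al^4(y-y_c)^4\phi_1$, and the mismatch between $(\partial_z+\partial_c)\big[\phi_1(y_c+st(y-y_c),c)\big]$ and the good derivative of $\phi_1$ evaluated at the intermediate point contributes $\partial_y\phi_1\cdot O(|y-y_c|)=O(\al|y-y_c|\,\phi_1)$ inside $\mathcal{T}$, hence $O(\al^3|y-y_c|^3\phi_1)$ overall; both are controlled by the target only when $\al|y-y_c|\lesssim 1$. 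On the other side, the crude bound $|(\partial_z+\partial_c)\phi_1|\le C\al\phi_1$ from Proposition \ref{prop:phi} is dominated by $C\al^2(y-y_c)^2\phi_1$ only when $\al(y-y_c)^2\gtrsim 1$, i.e.\ $|y-y_c|\gtrsim\al^{-1/2}$. In the annulus $\delta\al^{-1}\lesssim|y-y_c|\lesssim\al^{-1/2}$ neither argument applies: there the target $\al^2(y-y_c)^2\phi_1$ is smaller than $\al\phi_1$ by the factor $\al(y-y_c)^2\ll 1$, and ``boundedness of $\phi_1$ on a bounded range of $\al|y-y_c|$'' cannot recover a factor that degenerates like $1/\al$. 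This is precisely the regime where the smallness of the good derivative is a genuine cancellation rather than a consequence of the crude derivative bounds, so the first estimate (and the second, which you build on it) is not established there.

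The paper avoids the case split entirely by a different device. It applies $(\partial_z+\partial_c)$ to the integral equation $\phi_1=1+\al^2T\phi_1$ and integrates by parts so that all singular commutators cancel, leaving $(I-\al^2T)(\partial_z+\partial_c)\phi_1=$ (terms bounded by $\al^2T\phi_1$), where the only surviving coefficients involve $\partial_y(1/u')$ and are bounded. Positivity of $T$ and the explicit iterate bound $|T^k1|\le(y-y_c)^{2k}/(2k)!$ then give $|(\partial_z+\partial_c)\phi_1|\le C\big|\sum_{k\ge1}k\al^{2k}T^k1\big|\le C\al(y-y_c)\sinh\al(y-y_c)\le C\al^2(y-y_c)^2\phi_1$ uniformly in all regimes, and iterating the identity yields the second-order bound with the $\cosh$ weight. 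If you want to salvage your scheme, you would need to run a Gronwall or continuity argument on the differentiated integral identity valid for all $y$ simultaneously, rather than gluing a small-$\al|y-y_c|$ bootstrap to the crude a priori bound.
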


\begin{proof}
A direct calculation gives
\begin{align*}
&(\partial_z+\partial_c)\phi_1(u^{-1}(z),c)\\
=&\frac{\al^2}{(z-c)^2}\int_{y_c}^{u^{-1}(z)}(u(y')-c)^2\phi_1(y',c)dy'\frac{1}{u'(u^{-1}(z))}\\
&+2\al^2\int_{y_c}^{u^{-1}(z)}\frac{1}{(u(y')-c)^3}\int_{y_c}^{y'}(u(t)-c)^2\phi_1(t,c)dtdy'\\
&-2\al^2\int_{y_c}^{u^{-1}(z)}\frac{1}{(u(y')-c)^2}\int_{y_c}^{y'}(u(t)-c)\phi_1(t,c)dtdy'\\
&+\al^2T(\partial_c\phi_1)(u^{-1}(z),c)\\
=&\frac{\al^2}{(z-c)^2}\int_{y_c}^{u^{-1}(z)}(u(y')-c)^2\phi_1(y',c)dy'\frac{1}{u'(u^{-1}(z))}
+\al^2T(\partial_c\phi_1)(u^{-1}(z),c)\\
&-\al^2\int_{y_c}^{u^{-1}(z)}\partial_{y'}\big(\frac{1}{(u(y')-c)^2}\big)\frac{1}{u'(y')}\int_{y_c}^{y'}(u(t)-c)^2\phi_1(t,c)dtdy'\\
&-\al^2\int_{y_c}^{u^{-1}(z)}\frac{1}{(u(y')-c)^2}\int_{y_c}^{y'}\partial_t\big((u(t)-c)^2\big)\frac{\phi_1(t,c)}{u'(t)}dtdy',
\end{align*}
Integration by parts and taking into consideration the boundary condition, we obtain
\begin{align*}
&(\partial_z+\partial_c)\phi_1(u^{-1}(z),c)\\
=&\frac{\al^2}{(z-c)^2}\int_{y_c}^{u^{-1}(z)}(u(y')-c)^2\phi_1(y',c)dy'\frac{1}{u'(u^{-1}(z))}
+\al^2T(\partial_c\phi_1)(u^{-1}(z),c)\\
&-\al^2\big(\frac{1}{(u(y')-c)^2}\big)\frac{1}{u'(y')}\int_{y_c}^{y'}(u(z')-c)^2\phi_1(z',c)dz'\bigg|_{y_c}^{u^{-1}(z)}\\
&+\al^2\int_{y_c}^{u^{-1}(z)}\big(\frac{1}{(u(y')-c)^2}\big)\partial_{y'}\Big(\frac{1}{u'(y')}\int_{y_c}^{y'}(u(z')-c)^2\phi_1(z',c)dz'\Big)dy'\\
&-\al^2\int_{y_c}^{u^{-1}(z)}\frac{1}{(u(y')-c)^2}
(u(z')-c)^2\frac{\phi_1(z',c)}{u'(z')}\bigg|_{y_c}^{y'}dy'\\
&+\al^2\int_{y_c}^{u^{-1}(z)}\frac{1}{(u(y')-c)^2}\int_{y_c}^{y'}(u(z')-c)^2\partial_y\Big(\frac{\phi_1(z',c)}{u'(z')}\Big)dz'dy'\\
=&\al^2T(\partial_c\phi_1)(u^{-1}(z),c)
+\al^2\int_{y_c}^{u^{-1}(z)}\frac{1}{(u(y')-c)^2}
\partial_{y'}\Big(\frac{1}{u'(y')}\Big)\int_{y_c}^{y'}(u(z')-c)^2\phi_1(z',c)dz'dy'\\
&+\al^2\int_{y_c}^{u^{-1}(z)}\frac{1}{(u(y')-c)^2}\int_{y_c}^{y'}(u(z')-c)^2\partial_y\Big(\frac{\phi_1(z',c)}{u'(z')}\Big)dz'dy'\\
=&\al^2T(\partial_c\phi_1)(u^{-1}(z),c)
+\al^2T_0\Big(\partial_{y}\Big(\frac{1}{u'}\Big)T_{2,2}(\phi_1)\Big)(u^{-1}(z),c)\\
&+\al^2T\Big(\partial_y\Big(\frac{\phi_1}{u'}\Big))(u^{-1}(z),c)\\
=&\al^2T\Big(\partial_c\phi_1+\frac{1}{u'}\partial_y\phi_1\Big)(u^{-1}(z),c)
+\al^2T_0\Big(\partial_y\Big(\frac{1}{u'}\Big)T_{2,2}(\phi_1)\Big)(u^{-1}(z),c)\\
&+\al^2T\Big(\phi_1\partial_y\Big(\frac{1}{u'}\Big)\Big)(u^{-1}(z),c).
\end{align*}
This shows that
\begin{align*}
&(I-\al^2 T)\big(\partial_z\phi_1+\partial_c\phi_1\big)(u^{-1}(z),c)\\
&=\al^2T_0\Big(\partial_y\Big(\frac{1}{u'}\Big)T_{2,2}(\phi_1)\Big)(u^{-1}(z),c)
+\al^2T\Big(\phi_1\partial_y\Big(\frac{1}{u'}\Big)\Big)(u^{-1}(z),c),
\end{align*}
where the right hand side is bounded by $\al^2T\phi_1$.
Recall that $T$ is a positive operator. Thus, we have
\begin{align}
&\big|(\partial_z+\partial_c)\phi_1(u^{-1}(z),c)\big|\nonumber\\
&\leq \bigg|(I-\al^2 T)^{-1}
\bigg[\al^2T_0\Big(\partial_y\Big(\frac{1}{u'}\Big)T_{2,2}(\phi_1)\Big)(u^{-1}(z),c)\nonumber\\
&\quad+\al^2T\Big(\phi_1\partial_y\Big(\frac{1}{u'}\Big)\Big)(u^{-1}(z),c)
\bigg]\bigg|\nonumber\\
&\leq C\Big|(I-\al^2 T)^{-1} \al^2T\phi_1\Big|\nonumber\\
&\leq C\Big|\al^2T(I-\al^2 T)^{-2} 1\Big|
\leq C\Big|\sum_{k=1}^{\infty}k\al^{2k}T^{k}1\Big|.\label{eq:phi1-gd}
\end{align}

Using the fact that  $(u(z)-c)^2\leq (u(y')-c)^2$ for $|z-y_c|\leq |y'-y_c|$,  we obtain
\begin{align*}
|T1|=&
\bigg|\int_{y_c}^y\frac{1}{(u(y')-c)^2}\int_{y_c}^{y'}(u(z)-c)^2dzdy'\bigg|\\
\leq &\frac{1}{2}(y-y_c)^2.
\end{align*}
A simple iteration gives
\beno
|T^k1|\leq \frac{1}{(2k)!}(y-y_c)^{2k},
\eeno
which implies
\begin{align*}
\Big|\sum_{k=1}^{\infty}k\al^{2k}T^{k}1\Big|\leq&
C\al(y-y_c)\sum_{k=1}^{\infty} \frac{1}{(2k-1)!}(\al(y-y_c))^{2k-1}\\
\leq& C\al(y-y_c)\sinh\al(y-y_c)\\
\le& C\al^2(y-y_c)^2\phi_1,
\end{align*}
which together with (\ref{eq:phi1-gd}) gives the first inequality.

Now we prove the second inequality.  We have
\begin{align*}
&(\partial_z+\partial_c)^2\phi_1(u^{-1}(z),c)\\
&=\al^2(\partial_z+\partial_c)T(\partial_c\phi_1)(u^{-1}(z),c)
+\al^2(\partial_z+\partial_c)T_0\bigg(\partial_y\Big(\frac{1}{u'}\Big)T_{2,2}(\phi_1)\bigg)(u^{-1}(z),c)\\
&\quad+\al^2(\partial_z+\partial_c)T\Big(\partial_y\Big(\frac{\phi_1}{u'}\Big)\Big)(u^{-1}(z),c).
\end{align*}
Let $F(y,c)$ be a function with $\frac{F(y,c)}{(u(y)-c)^2}\to 0$ as $y\to y_c$.
Then we have
\begin{align*}
&(\partial_z+\partial_c)\int_{y_c}^{u^{-1}(z)}\frac{1}{(u(y')-c)^2}F(y',c)dy'\\
&=\frac{F(u^{-1}(z),c)}{(z-c)^2}(u^{-1})'(z)
-\frac{F(y',c)}{(u(y')-c)^2u'(y')}\bigg|_{y_c}^{u^{-1}(z)}\\
&\quad+\int_{y_c}^{u^{-1}(z)}\frac{1}{(u(y')-c)^2}\bigg(\partial_y\Big(\frac{1}{u'(y')}\Big)F(y',c)\bigg)dy'
+\int_{y_c}^{u^{-1}(z)}\frac{1}{(u(y')-c)^2}\bigg(\frac{1}{u'(y')}\partial_yF(y',c)\bigg)dy'\\
&\quad+\int_{y_c}^{u^{-1}(z)}\frac{1}{(u(y')-c)^2}\partial_cF(y',c)dy'\\
&=\int_{y_c}^{u^{-1}(z)}\frac{F(y',c)}{(u(y')-c)^2}\partial_y\Big(\frac{1}{u'(y')}\Big)dy'
+\int_{y_c}^{u^{-1}(z)}\frac{1}{(u(y')-c)^2}\bigg(\frac{1}{u'(y')}\partial_yF(y',c)\bigg)dy'\\
&\quad+\int_{y_c}^{u^{-1}(z)}\frac{1}{(u(y')-c)^2}\partial_cF(y',c)dy'.
\end{align*}
Similarly, we have
\begin{align*}
&(\partial_z+\partial_c)\int_{y_c}^{u^{-1}(z)}f(y',c)(u(y')-c)^2dy'\\
&=f(u^{-1}(z),c)(z-c)^2(u^{-1})'(z)
+\int_{y_c}^{u^{-1}(z)}\partial_cf(y',c)(u(y')-c)^2dy'\\
&\quad-\int_{y_c}^{u^{-1}(z)}\frac{f(y',c)}{u'(y')}\partial_y\big((u(y')-c)^2\big)dy'\\
&=\int_{y_c}^{u^{-1}(z)}\partial_cf(y',c)(u(y')-c)^2dy'
+\int_{y_c}^{u^{-1}(z)}\partial_y\big(\frac{f(y',c)}{u'(y')}\big)(u(y')-c)^2dy'.
\end{align*}
With the above equalities, we can deduce that
\begin{align*}
&(\partial_z+\partial_c)^2\phi_1(u^{-1}(z),c)\\
&=2\al^2\int_{y_c}^{u^{-1}(z)}\partial_y\Big(\frac{1}{u'(y')}\Big)T_{2,2}(\partial_c\phi_1)(y',c)dy'
+\al^2\int_{y_c}^{u^{-1}(z)}T_{2,2}(\partial_c^2\phi_1)(y',c)dy'\\
&\quad+\al^2\int_{y_c}^{u^{-1}(z)}T_{2,2}\phi_1(y',c)\Big(\partial_y(\frac{1}{u'(y')})\Big)^2dy'
+\al^2\int_{y_c}^{u^{-1}(z)}T_{2,2}\phi_1(y',c)\frac{1}{u'(y')}\partial_y^2(\frac{1}{u'(y')})dy'\\
&\quad+2\al^2\int_{y_c}^{u^{-1}(z)}\partial_y\Big(\frac{1}{u'(y')}\Big)T_{2,2}\Big(\partial_y\big(\frac{\phi_1}{u'}\big)\Big)(y',c)dy'
+2\al^2\int_{y_c}^{u^{-1}(z)}T_{2,2}\Big(\partial_c\partial_y\big(\frac{\phi_1}{u'}\big)\Big)(y',c)dy'\\
&\quad+\al^2\int_{y_c}^{u^{-1}(z)}T_{2,2}\Big(\partial_y\Big(\frac{1}{u'}\partial_y\big(\frac{\phi_1}{u'}\big)\Big)\Big)(y',c)dy'.
\end{align*}
Note that
\begin{align*}
&\partial_c^2\phi_1
+\partial_y\Big(\frac{1}{u'}\partial_y\big(\frac{\phi_1}{u'}\big)\Big)
+2\partial_c\partial_y\big(\frac{\phi_1}{u'}\big)\\
&=(\partial_z+\partial_c)^2\phi_1(u^{-1}(z),c)
+2\partial_y\Big(\frac{1}{u'(y)}\Big)(\partial_z+\partial_c)\phi_1(u^{-1}(z),c)\\&\quad+\partial_y^2\Big(\frac{1}{u'(y)}\Big)\frac{\phi_1}{u'(y)}
+\bigg(\partial_y\Big(\frac{1}{u'(y)}\Big)\bigg)^2\phi_1.
\end{align*}
Thus, we obtain
\begin{align*}
&(\partial_z+\partial_c)^2\phi_1(u^{-1}(z),c)\\
&=\al^2T\Big((\partial_z+\partial_c)^2\phi_1(u^{-1}(z),c)\Big)(u^{-1}(z),c)
+2\al^2T_0\Big(\partial_y\Big(\frac{1}{u'(y')}\Big)T_{2,2}((\partial_z+\partial_c)\phi_1(u^{-1}(z),c))\Big)\\
&\quad+\al^2\int_{y_c}^{u^{-1}(z)}T_{2,2}\phi_1(y',c)\Big(\partial_y(\frac{1}{u'(y')})\Big)^2dy'
+\al^2\int_{y_c}^{u^{-1}(z)}T_{2,2}\phi_1(y,c)\frac{1}{u'(y)}\partial_y^2(\frac{1}{u'(y)})dy'\\
&\quad+2\al^2\int_{y_c}^{u^{-1}(z)}\partial_y\Big(\frac{1}{u'(y')}\Big)T_{2,2}\Big(\phi_1\partial_y\big(\frac{1}{u'}\big)\Big)dy'
+2\al^2T\Big(\partial_y\Big(\frac{1}{u'}\Big)(\partial_{z}+\partial_c)\phi_1(u^{-1}(z),c)\Big)\\
&\quad+\al^2T\bigg[\partial_y^2\Big(\frac{1}{u'}\Big)\frac{\phi_1}{u'}
+\bigg(\partial_y\Big(\frac{1}{u'}\Big)\bigg)^2\phi_1\bigg].
\end{align*}
Then by using the estimate
\beno
|(\partial_y+\partial_c)\phi_1(u^{-1}(y),c)|\leq C\big|(I-\al^2T)^{-1}\al^2T\phi_1\big|,
\eeno
and the fact that $T$ is a positive operator, we deduce that
\begin{align*}
&|(\partial_y+\partial_c)^2\phi_1(u^{-1}(y),c)|\\
&\leq C\big|(I-\al^2T)^{-1}\al^2T\phi_1\big|
+C\big|(I-\al^2T)^{-1}\al^2T(I-\al^2T)^{-1}\al^2T\phi_1\big|\\
&\leq C\big|\al^2T(I-\al^2T)^{-2} 1\big|
+C\big|(\al^2T)^2(I-\al^2T)^{-3}1\big|\\
&\leq C\al(y-y_c)\sinh\al(y-y_c)+
C\big|\sum_{k=0}^{\infty}(k+1)(k+2)(\al^2T)^{k+2}1\big|\\
&\leq C\big(\al(y-y_c)\sinh\al(y-y_c)
+\al^2(y-y_c)^2\cosh\al(y-y_c)\big).
\end{align*}
This deduces the second inequality.
\end{proof}

\section{The inhomogeneous Rayleigh equation}
In this section, we consider the inhomogeneous Rayleigh equation
\beq\label{eq:Rayleigh-Ihom}
\left\{
\begin{aligned}
&\Phi''-\al^2\Phi-\frac{u''}{u-c}\Phi=f,\\
&\Phi(0)=\Phi(1)=0.
\end{aligned}
\right.
\eeq

Let $\phi(y,c)$ be the solution of the homogeneous Rayleigh equation given by Proposition \ref{prop:Rayleigh-Hom}, and
$\phi_1(y,c)\triangleq\frac {\phi(y,c)} {u(y)-c}$. We denote
\ben
&&\mathrm{II}_2\triangleq\textrm{p.v.}\int_0^1
\frac{u'(y)-u'(y_c)}{(u(y)-c)^2}dy,\label{eq:Pi-2}\\
&&\mathrm{II}_3\triangleq\int_0^1\frac{1}{(u(y)-c)^2}\Big(\frac{1}{\phi_{1}(y,c)^2}-1\Big)dy,\label{eq:Pi-3}
\een
for $c\in D_0$ and $y_c=u^{-1}(c)$.

The following lemma gives a sufficient and necessary condition so that $\mathcal{R}_\al$ has no embedding eigenvalues.

\begin{lemma}\label{lem:spectrum}
Let $\mathrm{A}(c)=u(0)-u(1)-\rho(c)\mathrm{II}_2+u'(y_c)\rho(c)\mathrm{II}_3$ and $\mathrm{B}(c)=\pi\rho(c)\frac{u''(y_c)}{u'(y_c)^2}$,
where $\rho(c)=(c-u(0))(u(1)-c)$.
Then $c\in D_0$ is an embedding eigenvalue of $\mathcal{R}_\al$ if and only if
\beno
\mathrm{A}(c)^2+\mathrm{B}(c)^2=0.
\eeno
\end{lemma}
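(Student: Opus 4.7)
The plan is to characterize embedding eigenvalues by the existence of a nonzero $\Phi\in H^2(0,1)\cap H_0^1(0,1)$ solving the Rayleigh equation $(u-c)(\Phi''-\al^2\Phi)-u''\Phi=0$ and to reduce this to an algebraic condition at the singular point $y_c=u^{-1}(c)$. The endpoints $c\in\{u(0),u(1)\}$ (where $\rho(c)=0$, so $B(c)=0$ but $A(c)=u(0)-u(1)\neq0$) are handled separately by a Frobenius analysis at the degenerate boundary point: the second Frobenius solution carries a $y\log y$-singularity (unless $u''(0)=0$), so any $H^2$ candidate is $\propto\phi(\cdot,c)$, and Proposition~\ref{prop:Rayleigh-Hom} shows $\phi(1,u(0)),\phi(0,u(1))\neq 0$, ruling out eigenfunctions; this matches $A(c)^2+B(c)^2>0$. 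Henceforth restrict to $c\in(u(0),u(1))$. I use the regular solution $\phi(y,c)$ from Lemma~\ref{lem:Ray-D0} and apply reduction of order: since the equation has no first-derivative term, the Wronskian is constant, so a second local solution is $\tilde\phi(y,c)=\phi(y,c)\int\!\frac{dz}{\phi(z,c)^2}$. Using $\phi(y,c)=u'(y_c)(y-y_c)+\tfrac{u''(y_c)}{2}(y-y_c)^2+O((y-y_c)^3)$ together with Lemma~\ref{lem:phi1-est}, one obtains
\[
\frac{1}{\phi(z,c)^2}=\frac{1}{u'(y_c)^2(z-y_c)^2}-\frac{u''(y_c)}{u'(y_c)^3(z-y_c)}+O(1)\qquad\text{near }y_c,
\]
so $\tilde\phi$ picks up a $\frac{u''(y_c)}{u'(y_c)^2}(y-y_c)\log|y-y_c|$-term, which lies in $H^1$ but not in $H^2$ (its second derivative is $\sim 1/(y-y_c)\notin L^2$).

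If $u''(y_c)\neq 0$, this logarithmic obstruction forces any $H^2$ solution to be locally a scalar multiple of $\phi$ on each side of $y_c$, and $C^1$-matching across $y_c$ forces the same scalar globally, i.e.\ $\Phi=\alpha\phi$ on $[0,1]$. Proposition~\ref{prop:Rayleigh-Hom} gives $\phi(0,c),\phi(1,c)\neq 0$ for interior $c$, so the boundary conditions yield $\alpha=0$. Thus there is no embedding eigenvalue, and $B(c)=\pi\rho(c)u''(y_c)/u'(y_c)^2\neq 0$ makes $A(c)^2+B(c)^2>0$, matching the claim.

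If $u''(y_c)=0$ (so $B(c)=0$), the logarithmic coefficient vanishes, the Rayleigh equation becomes regular on all of $[0,1]$, and the piecewise function
\[
\Gamma(y,c)=\begin{cases}\phi(y,c)\int_0^y\frac{dz}{\phi(z,c)^2}, & 0\le y<y_c,\\[4pt]\phi(y,c)\int_1^y\frac{dz}{\phi(z,c)^2}, & y_c<y\le 1,\end{cases}
\]
solves the Rayleigh equation on each subinterval and vanishes at $y=0$ and $y=1$. Cancelling the leading $1/(z-y_c)^2$-singularity against $\phi(y,c)\sim u'(y_c)(y-y_c)$ shows that both pieces extend continuously to $y_c$ with common value $-1/u'(y_c)\neq 0$. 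Because the ODE is regular at $y_c$, the two pieces fit together into a global $H^2$ eigenfunction exactly when their one-sided derivatives at $y_c$ coincide, which (after gathering the regularized singular contributions from the two sides) is equivalent to
\[
\Gamma_L'(y_c)-\Gamma_R'(y_c)\;=\;u'(y_c)\,\mathrm{FP}\!\int_0^1\!\frac{dz}{\phi(z,c)^2}\;=\;0,
\]
where $\mathrm{FP}$ denotes the Hadamard finite part.

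The final step is to compute this finite part and identify it with $A(c)/(u'(y_c)\rho(c))$. I would split $\frac{1}{\phi^2}=\frac{1}{(u-c)^2}+\frac{1}{(u-c)^2}\bigl(\frac{1}{\phi_1^2}-1\bigr)$: the second summand is integrable near $y_c$ (Lemma~\ref{lem:phi1-est} gives $\phi_1-1=O((y-y_c)^2)$) and contributes exactly $\mathrm{II}_3$. For the first summand, change variables $w=u(z)$ and decompose $\frac{1}{u'(u^{-1}(w))}=\frac{1}{u'(y_c)}+\bigl[\frac{1}{u'(u^{-1}(w))}-\frac{1}{u'(y_c)}\bigr]$; the constant-coefficient piece integrates in closed form to $\frac{u(0)-u(1)}{u'(y_c)\rho(c)}$, while the remainder yields $-\mathrm{II}_2/u'(y_c)$. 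Assembling gives $\mathrm{FP}\!\int_0^1\!\frac{dz}{\phi^2}=A(c)/(u'(y_c)\rho(c))$, so the matching holds iff $A(c)=0$, and the eigenfunction is nontrivial because $\Gamma(y_c,c)=-1/u'(y_c)\neq 0$. Combining the cases: $c\in D_0$ is an embedding eigenvalue iff $u''(y_c)=0$ and $A(c)=0$, iff $B(c)=0$ and $A(c)=0$, iff $A(c)^2+B(c)^2=0$. \emph{The main obstacle} is the finite-part computation of $\int_0^1 dz/\phi(z,c)^2$: one must carefully peel off the $1/(z-y_c)^2$ and $1/(z-y_c)$ singular contributions, confirm that $u''(y_c)=0$ eliminates the logarithmic remainder, and then verify that the residual regular pieces assemble into precisely the quantities $\mathrm{II}_2$ and $\mathrm{II}_3$ appearing in the definition of $A(c)$.
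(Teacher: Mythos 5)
Your argument is correct and establishes the same equivalence, but it is packaged differently from the paper's. The paper forces $u''(y_c)=0$ via the quadratic-form identity from Section 3 (``Proof of 4''), then shoots a single solution $\varphi$ from $y=0$ with $\varphi(0,c)=0$, $\varphi'(0,c)=1$ and shows through an explicit representation formula that $\varphi(1,c)=-\tfrac{\phi_1(0,c)\phi_1(1,c)}{u'(y_c)}\mathrm{A}(c)$, so that an embedding eigenvalue exists iff $\varphi(1,c)=0$ iff $\mathrm{A}(c)=0$. You instead obtain $u''(y_c)=0$ from the $(y-y_c)\log|y-y_c|$ obstruction to $H^2$ regularity of the second reduction-of-order solution, and you assemble the eigenfunction from the two one-sided solutions adapted to the boundary conditions, reducing its existence to the vanishing of the jump of one-sided derivatives at $y_c$, which equals $u'(y_c)$ times the finite part of $\int_0^1 dz/\phi(z,c)^2$. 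The two formulations are equivalent: up to the nonvanishing factor $-\rho(c)\phi_1(0,c)\phi_1(1,c)$, the paper's $\varphi(1,c)$ is exactly that regularized integral (compare the unnumbered remark after Lemma \ref{lem:spectrum} with the claims in Lemma \ref{lem:Wronskian}), and the decisive computation --- peeling $1/\phi^2$ into $1/(u-c)^2$ plus the $\mathrm{II}_3$ integrand, then splitting $1/(u-c)^2$ into the exactly integrable constant-coefficient piece giving $\tfrac{u(0)-u(1)}{u'(y_c)\rho(c)}$ and the $\mathrm{II}_2$ remainder --- is identical in both arguments. Your version has two genuine advantages: it makes visible that the eigenfunction, when it exists, coincides with the kernel $\Gamma(y,c)$ used later in Sections 7 and 9, and it explicitly disposes of the endpoint values $c\in\{u(0),u(1)\}$ (where $\rho(c)=0$ makes $\mathrm{B}=0$ trivially while $\mathrm{A}=u(0)-u(1)\neq 0$), a case the paper passes over in silence. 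If you write this up, add one line in the converse direction noting that a $C^1$ match of the two pieces at $y_c$ upgrades automatically to a $C^2$ (hence $H^2$) solution across $y_c$, because $u''(y_c)=0$ together with $u\in C^4$ makes the coefficient $u''/(u-c)$ continuous there.
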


\begin{proof}
Let $c=u(y_c)$ be an embedding eigenvalue of  $\mathcal{R}_\al$. We know that $u''(u^{-1}(c))=0$, thus $\mathrm{B}(c)=0$.
Next we show $\mathrm{A}(c)=0$. In such case, $\mathrm{A}(c)=0$ is equivalent to the Wronskian $W[\varphi_1,\varphi_2;c]=0$,
where $\varphi_1(y,c)$ and $\varphi_2(y,c)$ are two linearly independent solutions of the Rayleigh equation
\ben\label{eq:Ray-2}
\varphi''-\al^2\varphi-\frac{u''}{u-c}\varphi=0.
\een

Indeed, thanks to $u''(y_c)=0$,  we can construct a smooth solution $\varphi(y,c)$
of (\ref{eq:Ray-2}) with the boundary conditions $\varphi(0,c)=0$ and $\varphi'(0,c)=1$. Moreover, we have
\beno
W[\varphi_1,\varphi_2;c]=\varphi(1, c).
\eeno
Let $\phi(y,c)$ be a solution of (\ref{eq:Ray-2}) given by Lemma \ref{lem:Ray-D0} and
$\phi_1(y,c)=\frac {\phi(y,c)} {u(y)-c}$. Then $\varphi(y,c)$ has the following representation formula
\beno
\varphi(y,c)=\phi_1(0,c)\overline{\varphi}(y,c),
\eeno
where $\overline{\varphi}(y,c)$ is given by
\begin{align*}
\overline{\varphi}(y,c)=&\frac{\phi_1(y,c)}{u'(y_c)}(u(y)-u(0))\\
&+\frac{\phi_1(y,c)}{u'(y_c)}(u(y)-c)(u(0)-c)\int_0^y\frac{u'(y_c)-u'(z)}{(u(z)-c)^2}dz\\
&+\phi_1(y,c)(u(y)-c)(u(0)-c)\int_0^y\frac{1}{(u(z)-c)^2}\Big(\frac{1}{\phi_1(z,c)^2}-1\Big)dz.
\end{align*}
Then we find that $\varphi(1,c)=0$ is equivalent to
\begin{align*}
0=&\varphi(1,c)=\phi_1(0,c)\overline{\varphi}(y,c)\\
=&\frac{\phi_1(0,c)\phi_1(1,c)}{u'(y_c)}(u(1)-u(0))
-\frac{\phi_1(0,c)\phi_1(1,c)}{u'(y_c)}\rho(c)\int_0^1\frac{u'(y_c)-u'(y)}{(u(y)-c)^2}dy\\
&-\phi_1(0,c)\phi_1(1,c)\rho(c)\int_0^1\frac{1}{(u(y)-c)^2}\Big(\frac{1}{\phi_1(y,c)^2}-1\Big)dy\\
=&-\frac{\phi_1(0,c)\phi_1(1,c)}{u'(y_c)}\mathrm{A}(c).
\end{align*}

It remains to show that if $\mathrm{A}(c)^2+\mathrm{B}(c)^2=0$, then $c$ must be an embedding eigenvalue of
$\mathcal{R}_\al$.
The equality $\mathrm{A}(c)^2+\mathrm{B}(c)^2=0$ implies that
$u''(y_c)=0$ and   $\varphi(1,c)=0$.
 Then $\varphi(y,c)$ is a non zero solution of (\ref{eq:Ray-2}) with boundary conditions $\varphi(0,c)=\varphi(1,c)=0$.
This means that $c$ is an embedding eigenvalue of
$\mathcal{R}_\al$ with eigenfunction $\varphi(y,c)$.
\end{proof}

\begin{remark}
For $c\in \mathbb{C}\setminus D_0$, let $\varphi(y,c)$ be the solution of (\ref{eq:Ray-2}) with $\varphi(0,c)=0$ and $\varphi'(0,c)=1$, then $\varphi(y,c)$ is analytic in $c$, and $ \lim\limits_{c\rightarrow\infty}\varphi(y,c)=\dfrac{\sinh(\al y)}{\al}.$
For $c\in \Omega_{\epsilon_0}\setminus D_0$, we have
\beno
\varphi(y,c)=\phi(0,c)\phi(y,c)\int_0^y\frac{1}{\phi(y',c)^2}dy',
\eeno  in particular,
\beno
\varphi(1,c)=\phi(0,c)\phi(1,c)\int_0^1\frac{1}{\phi(y,c)^2}dy.
\eeno
\end{remark}

\begin{lemma}\label{lem:Wronskian}
If $\mathcal{R}_\al$ has no embedding eigenvalues, then there exists $\epsilon_1>0$ such that
for any $c\in \Omega_{\epsilon_1}\setminus D_0$, $\varphi(1,c)\neq 0$.
\end{lemma}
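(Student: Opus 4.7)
\medskip
\noindent\textbf{Proof plan.} The plan is to argue by contradiction. Suppose no such $\epsilon_1$ exists; then for every $n$ we may find $c_n\in \Omega_{\epsilon_0}\setminus D_0$ with $\varphi(1,c_n)=0$ and, after passing to a subsequence, $c_n\to c_\ast\in D_0$ (using that $\Omega_{\epsilon_0}\setminus D_0$ is bounded and its closure meets $D_0$). The goal is to show $c_\ast$ must be an embedding eigenvalue of $\mathcal{R}_\al$, contradicting the hypothesis via Lemma \ref{lem:spectrum}. For $c\in\Omega_{\epsilon_0}\setminus D_0$ we use the representation
\begin{align*}
\varphi(1,c)=\phi(0,c)\,\phi(1,c)\int_0^1\frac{dy}{\phi(y,c)^2},
\end{align*}
together with Proposition \ref{prop:Rayleigh-Hom} which guarantees $|\phi_1(y,c)|\geq 1/2$ on $[0,1]\times\Omega_{\epsilon_0}$. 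For $c_\ast$ in the interior of $D_0$, both $\phi(0,c_n)$ and $\phi(1,c_n)$ stay bounded away from zero, so $\varphi(1,c_n)=0$ forces $\int_0^1\phi(y,c_n)^{-2}dy\to 0$.

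The crux is to pass to the limit in that integral. First I would decompose
\begin{align*}
\frac{1}{\phi(y,c)^2}=\frac{1}{(u(y)-c)^2}+\frac{1}{(u(y)-c)^2}\Big(\frac{1}{\phi_1(y,c)^2}-1\Big),
\end{align*}
and further split the first piece by integrating by parts, using $\partial_y\big(1/(u-c)\big)=-u'/(u-c)^2$, to trade $(u-c)^{-2}$ for boundary terms plus a Cauchy-type integral $\int_0^1 u''(y)/\big(u'(y)^2(u(y)-c)\big)\,dy$. For the second piece, Proposition \ref{prop:phi} gives $\phi_1(y,c)^{-2}-1=O\big(\al^2(y-y_c)^2\big)$ near $y_c$, so the integrand is bounded there and the principal value is well defined. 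Together with the Plemelj formula applied on $c=c_r\pm i\epsilon$, $\epsilon\to 0^+$, I would obtain
\begin{align*}
\lim_{\epsilon\to 0^+}\int_0^1\frac{dy}{\phi(y,c_r\mp i\epsilon)^2}=\frac{\mathrm{A}(c_r)\pm i\,\mathrm{B}(c_r)}{\phi_1(0,c_r)\phi_1(1,c_r)\,\rho(c_r)}\cdot(-u'(y_{c_r})),
\end{align*}
the $\mathrm{A}(c_r)$-part coming from the principal values matching $\mathrm{II}_2$, $\mathrm{II}_3$ in \eqref{eq:Pi-2}--\eqref{eq:Pi-3}, and the $\mathrm{B}(c_r)$-part coming from the delta contribution $\pm i\pi\,u''(y_{c_r})/u'(y_{c_r})^3$. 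Comparing this to the formula for $\overline{\varphi}(y,c)$ used in the proof of Lemma \ref{lem:spectrum} provides a sanity check: the limit agrees with $\varphi(1,c_r\mp i0)/(\phi(0,c_r)\phi(1,c_r))$ evaluated via the one-sided boundary values. Hence $\varphi(1,c_n)\to 0$ yields $\mathrm{A}(c_\ast)=\mathrm{B}(c_\ast)=0$, and by Lemma \ref{lem:spectrum} $c_\ast$ is an embedding eigenvalue, a contradiction.

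Finally, the endpoint cases $c_\ast\in\{u(0),u(1)\}$ need separate treatment because the prefactor $\phi(0,c_n)$ or $\phi(1,c_n)$ itself tends to zero. For $c_n$ approaching $u(0)$ (either from $D_{\epsilon_0}$ or from $B_{\epsilon_0}^l$), I would rewrite $\varphi(1,c)=\phi_1(0,c)\,\overline{\varphi}(1,c)$ as in the proof of Lemma \ref{lem:spectrum}; since $\phi_1(0,c_n)$ stays away from zero, the condition $\varphi(1,c_n)=0$ collapses to $\overline{\varphi}(1,c_n)=0$, and the same limiting analysis applied to $\overline{\varphi}$ gives $\mathrm{A}(c_\ast)=\mathrm{B}(c_\ast)=0$ (where at the endpoint the function $\rho$ vanishes but the quotient $\mathrm{A}/\rho$ and $\mathrm{B}/\rho$ remain controlled). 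The analogous argument at $u(1)$ completes the proof. \emph{The main obstacle} is the careful bookkeeping in the second paragraph: matching the Plemelj/principal-value decomposition of $\int_0^1 \phi(y,c)^{-2}\,dy$ exactly to the quantities $\mathrm{A}(c)$ and $\mathrm{B}(c)$ introduced in Lemma \ref{lem:spectrum}, so that vanishing of the limit is equivalent to the embedding eigenvalue criterion there.
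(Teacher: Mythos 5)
Your proposal is correct and follows essentially the same route as the paper: the representation $\varphi(1,c)=\phi(0,c)\phi(1,c)\int_0^1\phi(y,c)^{-2}dy$, the decomposition of $\phi^{-2}$ into $(u-c)^{-2}$ plus a bounded remainder, the Plemelj-type limits identifying the boundary values with $\mathrm{A}(c_r)\mp i\,\mathrm{B}(c_r)$ (up to nonzero factors), and the criterion of Lemma \ref{lem:spectrum}. The paper simply packages the compactness step as uniform convergence and continuity of $\varphi(1,\cdot)$ up to $D_0$ rather than as a contradiction along a sequence $c_n\to c_\ast$; the two presentations are equivalent, and your separate treatment of the endpoints $u(0),u(1)$ matches the paper's third convergence claim.
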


\begin{proof}
Let $c_r$ be defined by (\ref{def:cr}). We claim the following uniform convergence:

\begin{itemize}

\item[1.] for $c_{\epsilon}\in \{Im\, c>0\}\cap D_{\epsilon_0}$,
\beno
\rho(c_{\epsilon})\int_0^1\frac{1}{\phi(y,c_\epsilon)^2}dy\longrightarrow
\frac{1}{u'(y_c)}\big(\mathrm{A}(c_r)-i\mathrm{B}(c_r)\big)\quad
\textrm{as }c_{\epsilon}\to c_r;
\eeno

\item[2.] for $c_{\epsilon}\in \{Im\, c<0\}\cap D_{\epsilon_0}$,
\beno
\rho(c_{\epsilon})\int_0^1\frac{1}{\phi(y,c_\epsilon)^2}dy\longrightarrow
\frac{1}{u'(y_c)}\big(\mathrm{A}(c_r)+i\mathrm{B}(c_r)\big)\quad
\textrm{as }c_{\epsilon}\to c_r;
\eeno

\item[3.] for $c_\epsilon \in B_{\epsilon_0}^l\cup B_{\epsilon_0}^r$,
\beno
\rho(c_{\epsilon})\int_0^1\frac{1}{\phi(y,c_\epsilon)^2}dy\longrightarrow
\frac{u(0)-u(1)}{u'(y_c)}\quad
\textrm{as }c_{\epsilon}\to c_r.
\eeno
\end{itemize}

By definition of $\rho(c)$ and $\phi_1(y,c)$, we have
\begin{align*}
\varphi(1,c)
=&-\rho(c)\phi_1(0,c)\phi_1(1,c)\int_0^1\frac{1}{\phi(y,c)^2}dy\quad \textrm{for }c\in \Omega_{\epsilon_0}\setminus D_0.
\end{align*}
It is easy to show that $A(c)$ and $B(c)$ are continuous in $D_0$. This along with the above claims implies that
$\varphi(1,c)$ is continuous in $\Om_{\epsilon_0}$.
Thus, the conclusion follows from Lemma \ref{lem:spectrum}.\medskip

Now let us first prove claim 1. Let $c_\epsilon =c+i\epsilon$ with $\epsilon\neq 0$ and $c\in D_0$ and $y_c=u^{-1}(c)$.
We write
\begin{align*}
&\rho(c_{\epsilon})\int_0^1\frac{1}{\phi(y,c_\epsilon)^2}dy\\
&=\rho(c_{\epsilon})\int_0^1\frac{dy}{(u(y)-c_\epsilon)^2\phi_{1}(y,c_\epsilon)^2}\\
&=\rho(c_{\epsilon})\int_0^1\frac{dy}{(u(y)-c_\epsilon)^2}
+\rho(c_{\epsilon})\int_0^1\frac{1}{(u(y)-c_\epsilon)^2}\Big(\frac{1}{\phi_{1}(y,c_\epsilon)^2}-1\Big)dy.
\end{align*}
By Lebesgue dominated convergence theorem, we get
\begin{align*}
\lim_{\epsilon\to 0}\int_0^1\frac{1}{(u(y)-c_\epsilon)^2}\Big(\frac{1}{\phi_{1}(y,c_\epsilon)^2}-1\Big)dy
=\int_0^1\frac{1}{(u(y)-c)^2}\Big(\frac{1}{\phi_{1}(y,c)^2}-1\Big)dy
=\mathrm{II}_3.
\end{align*}
Thanks to $u'(y_c)>c>0$, we have
\begin{align*}
&\rho(c_\epsilon)\int_0^1\frac{dy}{(u(y)-c_\epsilon)^2}\\
&=\frac{1}{u'(y_c)}\rho(c_\epsilon)\bigg(\int_0^1\frac{u'(y)dy}{(u(y)-c_\epsilon)^2}
-\int_0^1\frac{u'(y)-u'(y_c)}{(u(y)-c_\epsilon)^2}dy\bigg)\\
&\triangleq\frac{1}{u'(y_c)}(\mathrm{I}_1-\mathrm{I}_2).
\end{align*}
Thanks to $|\frac{1}{(u(y)-c_\epsilon)^2}|>c\epsilon^2>0$, we get
\begin{align*}
\mathrm{I}_1=-\frac{(c_\epsilon-u(0))(u(1)-c_\epsilon)}{u(y)-c_{\epsilon}}\bigg|_{0}^1
&=\frac{(c_\epsilon-u(0))(u(1)-c_\epsilon)}{u(0)-c_{\epsilon}}-\frac{(c_\epsilon-u(0))(u(1)-c_\epsilon)}{u(1)-c_\epsilon}\\
&=u(0)-u(1).
\end{align*}
Set $g(y)=u'(y)-u'(y_c)-\frac{u''(y_c)}{u'(y_c)^2}u'(y)(u(y)-c)$. Then
$g(y_c)=0$ and $g'(y_c)=0$.
\begin{align*}
\mathrm{I}_2
=&\rho(c_\epsilon)\int_0^1\frac{u'(y)-u'(y_c)}{(u(y)-c_{\epsilon})^2}dy\\
=&\rho(c_\epsilon)\int_0^1\frac{g(y)}{(u(y)-c_{\epsilon})^2}dy
+\rho(c_\epsilon)\frac{u''(y_c)}{u'(y_c)^2}\int_0^1\frac{u'(y)(u(y)-c)}{(u(y)-c_{\epsilon})^2}dy.
\end{align*}
Because of  $|g'(y)|=|g'(y)-g'(y_c)|\leq C|y-y_c|$, we get
\beno
\int_0^1\frac{g(y)}{(u(y)-c_{\epsilon})^2}dy\to p.v. \int_0^1\frac{g(y)}{(u(y)-c)^2}dy.
\eeno
Thus, we deduce that as $\epsilon\to 0+$,
\begin{align*}
&\rho(c_\epsilon)\int_0^1\frac{u'(y)(u(y)-c)}{(u(y)-c_{\epsilon})^2}dy
= \rho(c_\epsilon)\int_{u(0)-c_{\epsilon}}^{u(1)-c_{\epsilon}}\frac{x+i\epsilon}{x^2}dx\\
&=\rho(c_\epsilon)\ln\frac{u(1)-c_{\epsilon}}{u(0)-c_{\epsilon}}+i\epsilon(u(0)-u(1))\\
&\longrightarrow(c-u(0))(u(1)-c)\ln\frac{u(1)-c}{c-u(0)}+i\pi(c-u(0))(u(1)-c)\\
&=\textrm{p.v.}\rho(c)\int_0^1\frac{u'(y)(u(y)-c)}{(u(y)-c)^2}dy+i\pi\rho(c).
\end{align*}
This shows that
\begin{align*}
\mathrm{I}_2\longrightarrow \rho(c)\mathrm{II}_2+i\pi\frac{u''(y_c)}{u'(y_c)^2}\rho(c)\quad \textrm{as }\epsilon\to 0+.
\end{align*}
So, we have as $\epsilon\to 0+$,
\begin{align*}
\rho(c_\epsilon)\int_0^1\frac{1}{\phi(y,c_{\epsilon})^2}dy
\longrightarrow \frac{1}{u'(y_c)}\Big(u(0)-u(1)-\rho(c)\mathrm{II}_2-i\pi\frac{u''(y_c)}{u'(y_c)^2}\rho(c)\Big)+\rho(c)\mathrm{II}_3.
\end{align*}
This shows claim 1, and  the proof of claim 2 is similar.

Let us prove claim 3. Let $c_\epsilon = u(0)+\epsilon e^{i\th}$ with $\epsilon>0$. Similarly, we write
\begin{align*}
&\rho(c_{\epsilon})\int_0^1\frac{1}{\phi(y,c_\epsilon)^2}dy\\
&=\rho(c_{\epsilon})\int_0^1\frac{dy}{(u(y)-c_\epsilon)^2}
+\rho(c_{\epsilon})\int_0^1\frac{1}{(u(y)-c_\epsilon)^2}\Big(\frac{1}{\phi_{1}(y,c_\epsilon)^2}-1\Big)dy.
\end{align*}
Because $\frac{1}{(u(y)-c_\epsilon)^2}\big(\frac{1}{\phi_{1}(y,c_\epsilon)^2}-1\big)$ is uniformly bounded, the second term tends to zero.
For the first term, we write as above
\begin{align*}
&\rho(c_\epsilon)\int_0^1\frac{dy}{(u(y)-c_\epsilon)^2}\triangleq\frac{1}{u'(0)}(\mathrm{I}_1-\mathrm{I}_2).
\end{align*}
It's easy to see that
\begin{align*}
\mathrm{I}_1=u(0)-u(1).
\end{align*}
For $\mathrm{I}_2$, we have
\begin{align*}
|\mathrm{I}_2|=&\Big|\epsilon e^{i\th}(u(1)-u(0)-\epsilon e^{i\th})
\int_{u(0)}^{u(1)}\frac{u'(u^{-1}(z))-u'(0)}{(z-u(0)-\epsilon e^{i\th})^2}(u^{-1})'(z)dz\Big|\\
\leq &C\epsilon\int_0^{u(1)-u(0)}\frac{z}{\sqrt{(z+\epsilon\cos\th)^2+\epsilon^2\sin^2\th}}dz\\
\leq &C\epsilon|\ln \epsilon|\to 0 \quad \textrm{as }\epsilon\to 0.
\end{align*}
For the case of $c_\epsilon\in B_{\epsilon_0}^l$, the proof is similar.
\end{proof}

\begin{remark}\label{rem:spectrum}
If $A(c)<0$ and $B(c)=0$ for $c\in D_0$, then $\mathcal{R}_{\al}$ has no eigenvalue.

Indeed, under this assumption we can find $0<\epsilon_1<1$ such that
for any $c\in \overline{\Omega_{\epsilon_1}}\setminus D_0$, $\varphi(1, c)\not\in (-\infty,0]$.
Since $\varphi(1, c)$ is analytic for $c\in\mathbb{C}\setminus D_0$, and $\varphi(1, c)\rightarrow\dfrac{\sinh\al}{\al}$
for $c\rightarrow\infty$, we can find $R>2$ such that $\varphi(1, c)\in \mathbb{C}\setminus (-\infty,0]$ for $|c|\geq R$. Then by residue theorem,
the number of roots of $\varphi(1, c)$ for $c\in\mathbb{C}\setminus D_0$ equals to the number of roots of $\varphi(1, c)$ for $c\in B_R
\setminus \overline{\Omega_{\epsilon_1}}$, which equals  to(count multipilcity)
$$\frac{1}{2\pi i}\oint_{\partial B_R}-\oint_{\partial\Omega_{\epsilon_1}}\frac{\partial_c \varphi(1, c)}{\varphi(1, c)}dc=\frac{1}{2\pi i}\oint_{\partial B_R}-\oint_{\partial\Omega_{\epsilon_1}}\partial_c\ln \varphi(1, c)dc.$$

Let $z=\varphi(1, c)$.
For $c\in \partial B_R $, $\ln z$ is the analytic function defined in $\mathbb{C}\setminus (-\infty,0]$ such that $\ln z=\ln |z|+ i\arg z$ for
$|z|>0,\ -\pi<\arg z<\pi$.
For $c\in \partial\Omega_{\epsilon_1}$, by our assumption, if we take $\epsilon_1$ small enough, such that $\ln(z)=\ln |z|+ i\arg z$ for $|z|>0,\ -\pi<\arg z<\pi$.
Thus, we deduce that
\beno
\frac{1}{2\pi i}\oint_{\partial B_R}-\oint_{\partial\Omega_{\epsilon_1}}\partial_c\ln \varphi(1, c)dc=0.
\eeno
\end{remark}

\begin{proposition}\label{prop:Rayleigh-IH}
Let $c\in \Omega_{\epsilon_0}$ with $\epsilon_1$ as in Lemma \ref{lem:Wronskian}.
Suppose that $\mathcal{R}_\al$ has no embedding eigenvalues.
Then we have the following representation formula for the solution of (\ref{eq:Rayleigh-Ihom}):
\begin{align*}
\Phi(y,c)=\phi(y,c)\int_0^y\frac{1}{\phi(z,c)^2}\int_{y_c}^z\phi(y',c)f(y',c)dy'dz
+\mu(c)\phi(y,c)\int_0^y\frac{1}{\phi(y',c)^2}dy',
\end{align*}
where $y_c=u^{-1}(c_r)$ with $c_r$ defined by (\ref{def:cr}) and
$$
\mu(c)=-\frac{\int_0^1\frac{1}{\phi(z,c)^2}\int_{y_c}^z\phi f(y',c) dy'dz}{\int_0^1\frac{1}{\phi(y,c)^2}dy}.
$$
\end{proposition}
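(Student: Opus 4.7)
\medskip
\noindent\textbf{Proof plan.} The approach is variation of parameters based on the homogeneous Rayleigh equation, for which Proposition \ref{prop:Rayleigh-Hom} supplies the solution $\phi(y,c)$. First I construct a second linearly independent solution by reduction of order: set
$\phi_2(y,c)\triangleq \phi(y,c)\int_0^y \frac{dz}{\phi(z,c)^2}$.
Writing $\phi_2=\phi\cdot v$ with $v=\int_0^y \phi^{-2}\,dz$, a direct computation gives $\phi_2''=\phi''v$, so $\phi_2$ satisfies the homogeneous Rayleigh equation; the Wronskian is
$W[\phi,\phi_2]=\phi\phi_2'-\phi'\phi_2\equiv 1$,
which confirms linear independence wherever $\phi$ is nonzero.

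Next I verify that
\[
\Phi_1(y,c)\triangleq \phi(y,c)\int_0^y \frac{1}{\phi(z,c)^2}\int_{y_c}^z \phi(y',c)f(y',c)\,dy'\,dz
\]
is a particular solution of (\ref{eq:Rayleigh-Ihom}). Differentiating twice, the endpoint contributions from the inner integral at $y'=z$ and the outer integral at $z=y$ telescope to produce precisely $f(y,c)$, while the remaining terms collapse via $\phi''=\alpha^2\phi+\frac{u''}{u-c}\phi$ into $\alpha^2\Phi_1+\frac{u''}{u-c}\Phi_1$. Since $\Phi_1(0,c)=0$ and $\phi_2(0,c)=0$, the general solution satisfying the boundary condition at $y=0$ is $\Phi=\Phi_1+\mu(c)\phi_2$, and imposing $\Phi(1,c)=0$ yields $\mu(c)=-\Phi_1(1,c)/\phi_2(1,c)$, which after cancelling the common factor $\phi(1,c)$ is exactly the formula claimed.

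The key technical point is the nonvanishing of the denominator $\int_0^1\phi(y,c)^{-2}\,dy$. For $c\in\Omega_{\epsilon_0}\setminus D_0$, Proposition \ref{prop:Rayleigh-Hom} gives $|\phi_1(y,c)|\ge\frac12$, so $\phi(0,c)\phi(1,c)\neq 0$; the remark preceding Lemma \ref{lem:Wronskian} then records the identity
$\varphi(1,c)=\phi(0,c)\phi(1,c)\int_0^1\phi(y,c)^{-2}\,dy$,
and Lemma \ref{lem:Wronskian} translates the no-embedding-eigenvalues hypothesis into $\varphi(1,c)\neq 0$ on $\Omega_{\epsilon_1}\setminus D_0$. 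Hence $\mu(c)$ is well-defined and continuous on this set.

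The main obstacle is the case $c\in D_0$, where $\phi(y_c,c)=0$ makes both factors $\phi^{-2}$ in the formula singular. My plan is to interpret the representation by continuous extension from $\Omega_{\epsilon_0}\setminus D_0$, relying on two cancellations: in $\Phi_1$ the inner integral $\int_{y_c}^z \phi f\,dy'$ vanishes to second order at $z=y_c$, exactly cancelling the double pole of $\phi(z,c)^{-2}$ and leaving a convergent outer integrand; for the $\mu\phi_2$ term the delicate combination $\mu(c)\,\phi(y,c)\int_0^y\phi(z,c)^{-2}\,dz$ is to be regrouped into the regularized kernel $\Gamma(y,c)$ used in Sections~6--7. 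Passing to the limit $\mathrm{Im}\,c\to 0$ then yields the formula for $c\in D_0$, with the no-embedding-eigenvalues hypothesis ensuring the limiting denominator remains nonzero.
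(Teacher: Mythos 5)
Your proposal is correct and follows essentially the same route as the paper: the paper rewrites the equation as $\big(\phi^2(\Phi/\phi)'\big)'=f\phi$ and integrates twice, which is exactly your variation-of-parameters computation unpacked, and both arguments appeal to Lemma \ref{lem:Wronskian} (via the identity $\varphi(1,c)=\phi(0,c)\phi(1,c)\int_0^1\phi(y,c)^{-2}dy$) to see that the denominator in $\mu(c)$ is nonzero. Your closing discussion of the boundary case $c\in D_0$ is extra material that the paper defers to Proposition \ref{prop:psi-conv}, where the formula is interpreted through the limits $\Phi_\pm$.
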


\begin{proof}
Let $\varphi(y)$ be a solution of the homogenous Rayleigh equation
\beno
\varphi''-\al^2\varphi-\frac{u''}{u-c}\varphi=0,
\eeno
then the inhomogeneous Rayleigh equation
\beno
\left\{
\begin{aligned}
&\Phi''-\al^2\Phi-\frac{u''}{u-c}\Phi=f,\\
&\Phi(0)=\Phi(1)=0,
\end{aligned}
\right.
\eeno
is equivalent to
\beno
\left\{
\begin{aligned}
&\Big(\varphi^2\big(\frac{\Phi}{\varphi}\big)'\Big)'=f\varphi,\\
&\psi(0)=\psi(1)=0.
\end{aligned}
\right.
\eeno
This gives our result by integration and noting that $\mu(c)$ is well-defined by Lemma \ref{lem:Wronskian}.
\end{proof}

Next we study the convergence of the solution $\psi(y,c)$ with $f=\f {\om_0(y)} {i\al(u-c)}$.
Let $c\in D_0$ and $y_c=u^{-1}(c)$.
We introduce
\begin{align*}
\Phi_\pm(y,c)
\triangleq\left\{
\begin{array}{l}
\phi\int_0^y\frac{1}{\phi(z,c)^2}\int_{y_c}^z\phi f(y',c)dy'dz
+\mu_{\pm}(c)\phi\int_0^y\frac{1}{\phi(y',c)^2}dy'\quad 0\leq y\leq y_c,\\
\phi\int_1^y\frac{1}{\phi(z,c)^2}\int_{y_c}^z\phi f(y',c)dy'dz
+\mu_{\pm}(c)\phi\int_1^y\frac{1}{\phi(y',c)^2}dy'\quad y_c\leq y\leq 1,
\end{array}
\right.
\end{align*}
and
\begin{align*}
&\Phi_l(y)\triangleq\phi(y,u(0))\int_1^y\frac{1}{\phi(z,u(0))^2}\int_{0}^z\phi(y',u(0))f(y',u(0))dy'dz,\\
&\Phi_r(y)\triangleq\phi(y,u(1))\int_0^y\frac{1}{\phi(z,u(1))^2}\int_{1}^z\phi(y',u(1))f(y',u(1))dy'dz,
\end{align*}
where
\beno
&&\mu_{+}(c)=\frac{1}{\al}\frac{iu'(y_c)\rho(c)\mathrm{II}_1
-\rho(c)\frac{\omega_0(y_c)}{u'(y_c)}\pi}
{u(0)-u(1)-\rho(c)\mathrm{II}_2-i\pi\rho(c)\frac{u''(y_c)}{u'(y_c)^2}+u'(y_c)\rho(c)\mathrm{II}_3},\\
&&\mu_{-}(c)=\frac{1}{\al}\frac{iu'(y_c)\rho(c)\mathrm{II}_1
+\rho(c)\frac{\omega_0(y_c)}{u'(y_c)}\pi}
{u(0)-u(1)-\rho(c)\mathrm{II}_2+i\pi\rho(c)\frac{u''(y_c)}{u'(y_c)^2}+u'(y_c)\rho(c)\mathrm{II}_3},
\eeno
with
\ben
\mathrm{II}_1(\om_0)=\textrm{p.v.}\int_0^1\frac{\int_{y_c}^z\omega_0(y')\phi_1(y',c)
dy'}{\phi(z,c)^2}dz.\label{eq:Pi-1}
\een

\begin{lemma}\label{lem:conver}
Suppose that $f(y,c)$ is a Lipschitz function in $y\in [0,1]$ with
\beno
|f(y,c)-f(z,c)|\leq C|y-z|\quad \textrm{for }(y,z,c)\in [0,1]^2\times D_{\epsilon_0},
\eeno
and $f(y,c_{\epsilon})$ uniformly converges to $f(y,c)$ for $c_\epsilon=c+i\epsilon, c\in D_0$ as $\epsilon\to 0$.
Then it holds that for $0\leq y\leq y_c$,
\beno
(y-y_c+i\epsilon)\int_0^y\frac{f(z,c_{\epsilon})}{(z-y_c+i\epsilon)^2}dz\longrightarrow (y-y_c)\int_0^y\frac{f(z,c)}{(z-y_c)^2}dz
\eeno
as $\epsilon$ tends to zero.
\end{lemma}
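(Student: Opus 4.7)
The strategy is to split off the ``constant-weight'' contribution of the singular kernel and apply Lebesgue dominated convergence to the Lipschitz remainder. Concretely, write
\begin{align*}
(y-y_c+i\epsilon)\int_0^y \frac{f(z,c_\epsilon)}{(z-y_c+i\epsilon)^2}\,dz
&= f(y_c,c_\epsilon)\,(y-y_c+i\epsilon)\int_0^y \frac{dz}{(z-y_c+i\epsilon)^2}\\
&\quad + (y-y_c+i\epsilon)\int_0^y \frac{f(z,c_\epsilon)-f(y_c,c_\epsilon)}{(z-y_c+i\epsilon)^2}\,dz.
\end{align*}
The first summand is explicit: since the antiderivative of $(z-y_c+i\epsilon)^{-2}$ is $-(z-y_c+i\epsilon)^{-1}$, it equals $f(y_c,c_\epsilon)\bigl[-1+(y-y_c+i\epsilon)/(-y_c+i\epsilon)\bigr]$, which by continuity of $f$ in $c$ converges to $-\frac{y}{y_c}f(y_c,c)$ as $\epsilon\to 0$ (the degenerate case $y_c=0$ forces $y=0$, making both sides zero).

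For the second summand, the Lipschitz hypothesis yields the crucial uniform bound
$$\left|\frac{f(z,c_\epsilon)-f(y_c,c_\epsilon)}{(z-y_c+i\epsilon)^2}\right|\le \frac{C|z-y_c|}{(z-y_c)^2+\epsilon^2}\le \frac{C}{|z-y_c|},$$
which is integrable on $[0,y]$ whenever $y<y_c$. Together with pointwise convergence of the integrand (obtained from the uniform convergence $f(\cdot,c_\epsilon)\to f(\cdot,c)$), dominated convergence gives
$$\int_0^y \frac{f(z,c_\epsilon)-f(y_c,c_\epsilon)}{(z-y_c+i\epsilon)^2}\,dz\longrightarrow \int_0^y \frac{f(z,c)-f(y_c,c)}{(z-y_c)^2}\,dz,$$
and multiplying by $(y-y_c+i\epsilon)\to (y-y_c)$ delivers this piece of the limit. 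Finally, the two pieces assemble into $(y-y_c)\int_0^y f(z,c)(z-y_c)^{-2}\,dz$ by the identity
$$(y-y_c)\int_0^y \frac{f(z,c)}{(z-y_c)^2}\,dz = -\frac{y}{y_c}f(y_c,c) + (y-y_c)\int_0^y \frac{f(z,c)-f(y_c,c)}{(z-y_c)^2}\,dz,$$
which follows from the same elementary antiderivative computation used for the first summand.

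The endpoint $y=y_c$ requires a brief separate check, since the right-hand side then has the indeterminate form $0\cdot\infty$. In this case the outer factor is $i\epsilon$, while the Lipschitz estimate bounds the second integral by $C\int_0^{y_c}|z-y_c|\bigl((z-y_c)^2+\epsilon^2\bigr)^{-1}\,dz = O(\log(1/\epsilon))$, so its product with $i\epsilon$ vanishes; combined with the first summand this yields the consistent limit $-f(y_c,c)$.

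The main obstacle is precisely the domination step: the kernel $(z-y_c+i\epsilon)^{-2}$ has a double pole and is not absolutely integrable uniformly in $\epsilon$, so one cannot hope to pass to the limit directly. The Lipschitz subtraction replaces the $(z-y_c)^{-2}$ singularity by an integrable $(z-y_c)^{-1}$ singularity on $[0,y]$ with $y<y_c$, which is what makes the convergence work; any weaker modulus of continuity on $f$ would be insufficient.
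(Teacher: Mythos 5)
Your proof is correct and follows essentially the same route as the paper's: subtract $f(y_c,c_\epsilon)$, compute the constant-weight piece from the explicit antiderivative of $(z-y_c+i\epsilon)^{-2}$, and pass to the limit in the Lipschitz remainder by dominated convergence. The only (inessential) difference is that the paper keeps the outer factor $(y-y_c+i\epsilon)$ inside the integrand and uses $|z-y_c|\ge|y-y_c|$ for $0\le z\le y\le y_c$ to get a dominating \emph{constant}, which covers $y=y_c$ without the separate endpoint check your $C/|z-y_c|$ majorant requires.
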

\begin{proof}
Thanks to $0\leq z\leq y\leq y_c$, then $|z-y_c|\geq |y-y_c|$ and
\begin{align*}
\bigg|\frac{(f(z,c_{\epsilon})-f(y_c,c_{\epsilon}))(y-y_c+i\epsilon)}{(z-y_c+i\epsilon)^2}\bigg|
\leq C\frac{|z-y_c|(|y-z_0|^2+\epsilon^2)^{1/2}}{|z-y_c|^2+\epsilon^2}\leq C,
\end{align*}
which together with Lebesgue dominated convergence theorem gives
\begin{align*}
(y-y_c+i\epsilon)\int_0^y\frac{f(z,c_{\epsilon})-f(y_c,c_{\epsilon})}{(z-y_c+i\epsilon)^2}dz\longrightarrow
(y-y_c)\int_0^y\frac{f(z,c)-f(y_c,c)}{(z-y_c)^2}dz.
\end{align*}
On the other hand, we have
\begin{align*}
(y-y_c+i\epsilon)\int_0^y\frac{1}{(y-y_c+i\epsilon)^2}dz
&=-\frac{y-y_c+i\epsilon}{-y_c+i\epsilon}+1\\
&\longrightarrow (y-y_c)\int_0^y\frac{1}{(z-y_c)^2}dz
\end{align*}
as $\epsilon$ tends to zero. Then the lemma follows easily.
\end{proof}

\begin{proposition}\label{prop:psi-conv}
Let $\Phi(y,c)$ be a solution of (\ref{eq:Rayleigh-Ihom}) given by Proposition \ref{prop:Rayleigh-IH}
with $f=\f {\om_0(y)} {i\al(u-c)}$ for $\om_0\in L^2(0,1)$.
If $\mathcal{R}_\al$ has no embedding eigenvalues,
then it holds that
\begin{itemize}

\item[1.] for any $(y,y_c)\in[0,1]\times [0,1]$,
\beno
\lim_{\epsilon\to 0+}\Phi(y,c_\epsilon)=\Phi_+(y,u(y_c)),\quad
\lim_{\epsilon\to 0-}\Phi(y,c_\epsilon)=\Phi_-(x,u(y_c)),
\eeno
where $c_\epsilon=c+i\epsilon$ and $c=u(y_c)$.

\item[2.] for any $y\in[0,1]$,
\beno
\lim_{\epsilon\to 0+}\Phi(y,c_\epsilon^l)=\Phi_l(y),\quad
\lim_{\epsilon\to 0+}\Phi(y,c_\epsilon^r)=\Phi_r(y),
\eeno
where $c_\epsilon^l=u(0)+\epsilon e^{i\theta}$ and $c_\epsilon^r=u(1)-\epsilon e^{i\theta}$ for $\th\in [\frac \pi 2, \frac {3\pi} 2]$.
\end{itemize}
\end{proposition}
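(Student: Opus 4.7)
My plan is to reduce the proposition to two convergence statements: (i) $\mu(c_\epsilon)\to\mu_{\pm}(c)$, and (ii) convergence of the two integrals appearing in $\Phi$. The passage between integration intervals $[0,y]$ for $y\le y_c$ and $[1,y]$ for $y>y_c$ in the definition of $\Phi_\pm$ is in fact an identity for the given $\Phi$: using $\int_0^y=\int_0^1+\int_1^y$ together with the definition of $\mu$ (which forces $\int_0^1\phi^{-2}\int_{y_c}^z\phi f\,dy'\,dz+\mu(c)\int_0^1\phi^{-2}\,dy'=0$), the $\Phi$ of Proposition \ref{prop:Rayleigh-IH} already agrees with the form of $\Phi_\pm$ on $[y_c,1]$; on $[0,y_c]$ the two formulas are identical. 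Part 1 then follows from (i) and (ii).

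For (i), write $\mu(c)=-N(c)/D(c)$ with $D(c)=\int_0^1\phi^{-2}\,dy$ and $N(c)=\int_0^1\phi^{-2}\int_{y_c}^z\phi f\,dy'\,dz$. The limit of $\rho(c_\epsilon)D(c_\epsilon)$ as $\epsilon\to 0\pm$ is already $\big(\mathrm{A}(c)\mp i\mathrm{B}(c)\big)/u'(y_c)$ by the proof of Lemma \ref{lem:Wronskian}. For $\rho(c_\epsilon)N(c_\epsilon)$, substitute $\phi f=\phi_1\omega_0/(i\alpha)$ and $\phi=(u-c)\phi_1$, then split $\phi_1^{-2}=1+(\phi_1^{-2}-1)$. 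The $(\phi_1^{-2}-1)$ piece is $O((z-y_c)^2)$ by Lemma \ref{lem:phi1-est} and converges by dominated convergence, producing the $u'(y_c)\rho(c)\mathrm{II}_3$ term. For the $1$ piece, decompose $F(z,c_\epsilon):=\int_{y_c}^z\phi_1\omega_0\,dy'=\omega_0(y_c)(z-y_c)+R(z,c_\epsilon)$, where the remainder $R$ vanishes to higher order at $z=y_c$ (using $\phi_1(y_c,c)=1$) and yields the principal value $iu'(y_c)\rho(c)\mathrm{II}_1$. The leading piece contributes the singular Plemelj term through
$$\rho(c_\epsilon)\int_0^1\frac{z-y_c}{(u(z)-c_\epsilon)^2}\,dz\longrightarrow \textrm{p.v.}\pm\frac{i\pi\rho(c)}{u'(y_c)^2}$$
(the same substitution $x=u(y)-c_\epsilon$ as in Lemma \ref{lem:Wronskian}), giving the $\mp\rho(c)\omega_0(y_c)\pi/u'(y_c)$ term. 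Assembling matches exactly the formulas stated for $\mu_\pm(c)$.

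For (ii), the two remaining integrals $\phi(y,c_\epsilon)\int_e^y\phi^{-2}\int_{y_c}^z\phi f\,dy'\,dz$ and $\phi(y,c_\epsilon)\int_e^y\phi^{-2}\,dy'$, where $e=0$ or $e=1$ is chosen so the range does not straddle $y_c$, are handled by writing $\phi^2=(u-c_\epsilon)^2\phi_1^2$ and noting that the integrand is of the form $h(z,c_\epsilon)/(u(z)-c_\epsilon)^2$ with $h$ Lipschitz in $z$ uniformly in $c_\epsilon$ (here $\int_{y_c}^z\phi f\,dy'$ is itself Lipschitz in $z$ and vanishes at $z=y_c$). Lemma \ref{lem:conver} together with the continuity of $\phi$ and $\phi_1$ in $c$ (Proposition \ref{prop:Rayleigh-Hom}) yield convergence to the real-$c$ counterpart. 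Combining with (i) gives $\Phi(y,c_\epsilon)\to\Phi_\pm(y,u(y_c))$. Part 2 is analogous but simpler: by claim 3 of Lemma \ref{lem:Wronskian}, $\rho(c_\epsilon^l)D(c_\epsilon^l)\to(u(0)-u(1))/u'(0)\neq 0$, while $\rho(c_\epsilon^l)N(c_\epsilon^l)\to 0$ because $N$ stays bounded (the $(z-y_c)^2$ vanishing of $\int_{y_c}^z\phi f\,dy'$ compensates $\phi^{-2}$) and $\rho(c_\epsilon^l)\to 0$, so $\mu(c_\epsilon^l)\to 0$. Rewriting $\Phi$ via the $\int_0^y=\int_0^1+\int_1^y$ identity, the integration range $[1,y]$ stays away from the boundary singularity at $y_c=0$ and the first integral converges by continuity to $\Phi_l(y)$ while the $\mu$ term is negligible; the case $c_\epsilon^r\to u(1)$ is symmetric.

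The main technical obstacle is the isolation of the Plemelj contribution in $\rho(c_\epsilon)N(c_\epsilon)$, in particular justifying the pointwise appearance of $\omega_0(y_c)$ from an $L^2$ function. This is interpreted at Lebesgue points of $\omega_0$ via the precise decomposition of $F(z,c_\epsilon)$ above, and the identity for $\mu_\pm$ holds for almost every $y_c\in(u(0),u(1))$; this is sufficient for the subsequent integration in $c$ that will appear in the representation formula.
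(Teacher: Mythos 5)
Your proposal follows essentially the same route as the paper: the convergence of $\mu(c_\epsilon)$ is obtained by the same splitting into the $(\phi_1-1)$ and $(\phi_1^{-2}-1)$ pieces (handled by dominated convergence) plus a Plemelj-type limit for the remaining term $\rho(c_\epsilon)\int_0^1(u(z)-c_\epsilon)^{-2}\int_{y_c}^z\om_0\,dy'\,dz$, the convergence of the two integrals in $\Phi$ is handled exactly via Lemma \ref{lem:conver}, and part 2 reduces to $\mu(c_\epsilon^{l,r})\to 0$; your explicit $\int_0^y=\int_0^1+\int_1^y$ reconciliation of the two forms of $\Phi_\pm$ and your a.e.-$y_c$ reading of the pointwise $\om_0(y_c)$ term are clarifications the paper leaves implicit. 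One small slip in part 2: the inner integral $\int_{y_c}^z\phi f\,dy'$ vanishes only to first order (order $1/2$ for $\om_0\in L^2$), so $N(c_\epsilon^l)$ need not stay bounded — it can grow like $\epsilon^{-1/2}$ — but since $\rho(c_\epsilon^l)=O(\epsilon)$ the product $\rho N$ still tends to zero and the conclusion $\mu(c_\epsilon^l)\to 0$ stands.
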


\begin{proof}

Let us first prove the first statement. To show the convergence of $\mu(c_\epsilon)$, we consider
\begin{align*}
\mu(c_\epsilon)=&-\frac{\int_0^1\frac{1}{\phi(z,c_\epsilon)^2}\int_{y_c}^z\phi f(y,c_\epsilon) dydz}{\int_0^1\frac{1}{\phi (y,c_\epsilon)^2}dy}\\
=&-\frac{\rho(c_\epsilon)\int_0^1\frac{1}{\phi(z,c_\epsilon)^2}\int_{y_c}^z\phi f(y,c_\epsilon)dydz}{\rho(c_\epsilon)\int_0^1\frac{1}{\phi(y,c_\epsilon)^2}dy}.
\end{align*}
By the claims in the proof of Lemma \ref{lem:Wronskian}, it suffices to deal with the numerator of $\mu(c)$, which is decomposed as
\begin{align*}
\rho(c_\epsilon)\int_0^1\frac{1}{\phi(z,c_\epsilon)^2}\int_{y_c}^z\phi f(y,c_\epsilon)dydz
=&\rho(c_\epsilon)\int_0^1\frac{1}{\phi(z,c_{\epsilon})^2}\int_{y_c}^z\phi_{1}(y,c_\epsilon)\frac{\omega_0(y)}{i\al}dydz\\
=&\rho(c_\epsilon)\int_0^1\frac{1}{\phi(z,c_\epsilon)^2}\int_{y_c}^z(\phi_{1}(y,c_{\epsilon})-1)\frac{\omega_0(y)}{i\al}dydz\\
&+\rho(c_\epsilon)\int_0^1\frac{\int_{y_c}^y\frac{\omega_0(y)}{i\al}dy}{(u(z)-c_\epsilon)^2}\big(\frac{1}{\phi_{1}(z,c_{\epsilon})^2}-1\big)dz\\
&+\rho(c_\epsilon)\int_0^1\frac{\int_{y_c}^z\frac{\omega_0(y)}{i\al}dy}{(u(z)-c_\epsilon)^2}dz.
\end{align*}
Using the inequality $|\phi_{1}(y,c_{\epsilon})-1|\leq C(|y-y_c|^2+\epsilon^2)$ and Lebesgue dominated convergence theorem,
we get
\beno
&&\rho(c_\epsilon)\int_0^1\frac{1}{\phi(z,c_\epsilon)^2}\int_{y_c}^z(\phi_{1}(y,c_{\epsilon})-1)\frac{\omega_0(y)}{i\al}dydz\\
&&\longrightarrow\rho(c)\int_0^1\frac{1}{\phi(z,c)^2}\int_{y_c}^z(\phi_{1}(y,c)-1)\frac{\omega_0(y)}{i\al}dydz,
\eeno
and
\beno
&&\rho(c_\epsilon)\int_0^1\frac{\int_{y_c}^y\frac{\omega_0(y)}{i\al}dy}{(u(z)-c_\epsilon)^2}\big(\frac{1}{\phi_{1}(z,c_{\epsilon})^2}-1\big)dz
\longrightarrow\rho(c)\int_0^1\frac{\int_{y_c}^z\frac{\omega_0(y)}{i\al}dy}{(u(z)-c)^2}\big(\frac{1}{\phi_{1}(z,c)^2}-1\big)dz
\eeno
as $\epsilon$ tends to zero.
Similar to the proof of $\mathrm{I}_2$ in Lemma \ref{lem:Wronskian}, we have
\beno
\rho(c_\epsilon)\int_0^1\frac{\int_{x_0}^z\frac{\omega_0(y)}{i\al}dy}{(u(z)-c_\epsilon)^2}dz\to \rho(c)\int_0^1\frac{\int_{x_0}^z\frac{\omega_0(y)}{i\al}dy}{(u(z)-c)^2}dz+\rho(c)\frac{\omega_0(y_c)}{i\al u'(y_c)^2}\pi i.
\eeno
as $\varepsilon\rightarrow 0+$. Thus, we obtain as $\epsilon\to 0+$,
\begin{align*}
&\rho(c_\epsilon)\int_0^1\frac{1}{\phi(z,c_\epsilon)^2}\int_{y_c}^z\phi f(y,c_\epsilon)dydz\\
&\longrightarrow \textrm{p.v.}\rho(c)\int_0^1\frac{\int_{y_c}^z\frac{\omega_0(y)\phi_1(y,c)}
{i\al}dy}{\phi(z,c)^2}dz
+\rho(c)\frac{\omega_0(y_c)}{i\al u'(y_c)^2}\pi i\\
&=-\frac{1}{\al}\Big(i\rho(c)\mathrm{II}_1(\om_0)-\rho(c)\frac{\omega_0(y_c)}{u'(y_c)^2}\pi\Big).
\end{align*}
Similarly, we have that $\epsilon\to 0-$,
\begin{align*}
\rho(c_\epsilon)\int_0^1\frac{1}{\phi(z,c_\epsilon)^2}\int_{y_c}^z\phi f(y,c_\epsilon)dydz
\longrightarrow-\frac{1}{\al}\Big(i\rho(c)\mathrm{II}_1+\rho(c)\frac{\omega_0(y_c)}{u'(y_c)^2}\pi\Big).
\end{align*}
This shows by Lemma \ref{lem:spectrum} that
\ben\label{eq:mu-con}
\mu(c_\epsilon)\longrightarrow \mu_\pm(c)\quad as \quad \epsilon\to 0\pm.
\een

Next we show the convergence of $\psi(y,c_\epsilon)$.
Using the formula
\begin{align*}
\phi(y,c_{\epsilon})
=&(u(y)-u(y_c)-i\epsilon)\phi_{1}(y,c_{\epsilon})\\
=&\big((y-y_c)\int_0^1u'(y_c+t(y-y_c))dt-i\epsilon\big)\phi_{1}(y,c_{\epsilon}),
\end{align*}
we deduce from Lemma \ref{lem:conver} that as $\epsilon\to 0$,
\beno
&&\phi(y,c_{\epsilon})\int_0^y\frac{1}{\phi(z,c_{\epsilon})^2}dy
\longrightarrow\phi(y,c)\int_0^y\frac{1}{\phi(y,c)^2}dy,\\
&&\phi(y,c_\epsilon)\int_0^y\frac{1}{\phi(z,c_\epsilon)^2}\int_{y_c}^z\phi f(y',c_{\epsilon})dy'dz
\longrightarrow \phi(y,c)\int_0^y\frac{1}{\phi(z,c)^2}\int_{y_c}^z\phi f(y',c)dy'dz,
\eeno
for $y_c\ge y\ge 0$, and
\beno
&&\phi(y,c_{\epsilon})\int_1^y\frac{1}{\phi(y,c_{\epsilon})^2}dy
\longrightarrow\phi(y,c)\int_1^y\frac{1}{\phi(y',c)^2}dy',\\
&&\phi(y,c_\epsilon)\int_1^y\frac{1}{\phi(z,c_\epsilon)^2}\int_{y_c}^z\phi f(y',c_{\epsilon})dydz
\longrightarrow \phi\int_1^x\frac{1}{\phi(z,c)^2}\int_{y_c}^z\phi f(y',c)dy'dz,
\eeno
for $y_c\leq y\leq 1$. This finishes the proof of the first statement.

The second statement is similar just by noting that in this case, we have
\beno
\mu(c_\epsilon^l)\longrightarrow 0,\quad \mu(c_\epsilon^r)\longrightarrow 0
\eeno
as $\epsilon\rightarrow 0+$.
\end{proof}

\section{Representation formula of the solution}
Let $\widehat{\psi}(t,\al,y)$ be the solution of the linearized Euler equations
\beno
\frac 1 {i\al}\partial_t\widehat{\psi}=(\partial_y^2-\al^2)^{-1}\big(u''(y)-u(y)(\partial_y^2-\al^2)\big)\widehat{\psi}=-\mathcal{R}_\al\widehat{\psi}\\
\eeno
with the initial data
\beno
\psi(0,\al, y)=(\al^2-\pa_y^2)^{-1}\widehat{\om}_0(\al,y),
\eeno
where $\widehat{\om}_0(\al,y)=\mathcal{F}_x\om_0$.
We know from (\ref{eq:stream formula}) that
\beno
\widehat{\psi}(t,\al,y)=\frac{1}{2\pi i}\int_{\partial\Omega}
e^{-i\al tc}(c-\mathcal{R}_\al)^{-1}\widehat{\psi}(0,\al,y)dc,
\eeno
where $\Omega$ is a simple connected domain including the spectrum $\sigma(\mathcal{R}_\al)$ of $\mathcal{R}_\al$.

Note that $P_{i\al\mathcal{R}_\al}\widehat{\psi}(0,\al,y)=0$ if $P_{\mathcal{L}}\om_0=0$.
Thus, we have
\beno
\widehat{\psi}(t,\al,y)=\frac{1}{2\pi}\int_{\partial\Omega_{\epsilon}}e^{-i\al tc}(c-\mathcal{R}_\al)^{-1}\widehat{\psi}(0,\al,y)dc,
\eeno
where $\Om_\epsilon$ defined by (\ref{eq:Omega}) with $\epsilon$ sufficiently small.

Let $\Phi(\al,y,c)$ be the solution of  (\ref{eq:Rayleigh-Ihom})
with $f(\al,y,c)=\f {\widehat{\om}_0(\al,y)} {i\al(u-c)}$. It is easy to see that
\beno
(c-\mathcal{R}_\al)^{-1}\widehat{\psi}(0,\al,y)=i\al\Phi(\al,y,c).
\eeno
This gives
\beq\label{eq:solution formu}
\widehat{\psi}(t,\al,y)=\frac{1}{2\pi}\int_{\partial\Omega_\epsilon}\al\Phi(\al,y,c)e^{-i\al ct}dc.
\eeq
Since $\mathcal{L}$(thus, $\mathcal{R}_\al$) has no embedding eigenvalues, Lemma \ref{lem:Wronskian} ensures that there are no eigenvalues of $\mathcal{R}_\al$ in $\Omega_{\epsilon}$ for $\epsilon$ small enough. Then it follows from Proposition \ref{prop:psi-conv} that
\begin{align}
\widehat{\psi}(t,\al,y)=&\frac{1}{2\pi}\int_{\partial\Omega_{\epsilon}}\al\Phi(\al,y,c)e^{-i\al ct}dc\nonumber\\
=&\lim_{\epsilon\to 0^+}\frac{1}{2\pi}\int_{\partial\Omega_{\epsilon}}\al\Phi(\al,y,c)e^{-i\al ct}dc\nonumber\\
=&\frac{1}{2\pi}\int_{u(0)}^{u(1)}\al\widetilde{\Phi}(y,c)e^{-i\al ct}dc,\label{eq:psi-e}
\end{align}
where
\beno
\widetilde{\Phi}(y,c)=
\left\{
\begin{aligned}
&(\mu_-(c)-\mu_+(c))\phi(y,c)\int_0^y\frac{1}{\phi(z,c)^2}dz\quad0\leq y<y_c,\\
&(\mu_-(c)-\mu_+(c))\phi(y,c)\int_1^y\frac{1}{\phi(z,c)^2}dz\quad y_c<y\leq 1,
\end{aligned}
\right.
\eeno
with $y_c=u(c)$, and $\phi(y,c)$ is the solution of (\ref{eq:Rayleigh-H}) given by Proposition \ref{prop:Rayleigh-Hom}.

We denote
\begin{align*}
\mathrm{A}&\triangleq u(0)-u(1)-\rho(c)\textrm{II}_2+u'(y_c)\rho(c)\textrm{II}_3,\\
\mathrm{B}&\triangleq \pi\rho(c)\frac{u''(y_c)}{u'(y_c)^2},\\
\mathrm{C}&\triangleq \rho(c)\frac{\widehat{\omega}_0(\al, y_c)}{u'(y_c)}\pi,\quad \mathrm{D}\triangleq u'(y_c)\rho(c)\textrm{II}_1(\widehat{\om}_0),
\end{align*}
where $\rho(c)=(c-u(0))(u(1)-c)$, and $\textrm{II}_1, \textrm{II}_2$ and $\textrm{II}_3$ are given by (\ref{eq:Pi-1}), (\ref{eq:Pi-2}) and (\ref{eq:Pi-3}) respectively.
Then we have
\begin{align}\label{def:mu}
\mu_-(c)-\mu_+(c)=\frac{2}{\al}\frac{\mathrm{AC}+\mathrm{BD}}{\mathrm{A}^2+\mathrm{B}^2}\triangleq\frac{2}{\al}\rho(c)\mu(c).
\end{align}

\section{Sobolev regularity of $\mu(c)$}

In this section, we study the regularity of $\mu(c)$ defined by (\ref{def:mu}). The result
is stated as follows.

\begin{proposition}\label{prop:mu}
With the same assumptions as in Theorem \ref{Thm:main}, there exists a constant $C$ independent of $\al$ such that
\begin{itemize}

\item[1.] $L^2$ estimates
\beno
&&\|\rho\mu\|_{L^2}\leq \frac{C}{\al}\|\widehat{\omega}_0(\al,\cdot)\|_{L^2},\\
&&\|\mu\|_{L^2}\le C\|\widehat{\omega}_0(\al,\cdot)\|_{L^2}.
\eeno

\item[2.] $W^{1,2}$ estimates
\beno
&&\|\partial_c(\rho\mu)\|_{L^2}
\leq C\|\widehat{\omega}_0(\al,\cdot)\|_{H^1},\\
&&\|\partial_c\mu\|_{L^2}
\leq C(1+\al)\|\widehat{\omega}_0(\al,\cdot)\|_{H^1}.
\eeno

\item[3.] $W^{2,2}$ estimates
\beno
&&\|\partial_c^2(\rho\mu)\|_{L^2}\leq C(1+\al)\|\widehat{\omega}_0(\al,\cdot)\|_{H^2},\\
&&\|\rho\partial_c^2(\rho\mu)\|_{L^2}\leq C\|\widehat{\omega}_0(\al,\cdot)\|_{H^2}.
\eeno
\end{itemize}
\end{proposition}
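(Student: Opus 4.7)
The strategy is to exploit the quotient formula
\begin{align*}
\rho(c)\mu(c)=\frac{\mathrm{A}(c)\mathrm{C}(c)+\mathrm{B}(c)\mathrm{D}(c)}{\mathrm{A}(c)^2+\mathrm{B}(c)^2}
\end{align*}
and estimate numerator and denominator separately. The first step is to establish a uniform lower bound $\mathrm{A}(c)^2+\mathrm{B}(c)^2\ge c_1>0$ on $D_0$. By Lemma \ref{lem:spectrum} the no-embedding-eigenvalues hypothesis forces $\mathrm{A}^2+\mathrm{B}^2$ not to vanish; since $\mathrm{A},\mathrm{B}$ are continuous on the compact interval $D_0$ (note $\mathrm{A}(u(0))=\mathrm{A}(u(1))=u(0)-u(1)\ne 0$ while $\mathrm{B}$ vanishes at the endpoints), the infimum is strictly positive. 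Hence the denominator contributes only a harmless bounded factor, and all estimates reduce to controlling the numerator and its $c$-derivatives.

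The $L^2$ bounds are then essentially immediate: $\mathrm{C}$ and $\mathrm{D}$ each carry an explicit factor $\rho(c)$, so $|\mathrm{A}\mathrm{C}+\mathrm{B}\mathrm{D}|\lesssim \rho(c)(|\widehat\om_0(\al,y_c)|+|\mathrm{II}_1(\widehat\om_0)|)$, and the change of variables $c=u(y_c)$ (with $u'\ge c_0>0$) together with the $L^2$ boundedness of the principal-value operator $\widehat\om_0\mapsto \mathrm{II}_1(\widehat\om_0)$ proved in the appendix gives $\|\rho\mu\|_{L^2}\lesssim \al^{-1}\|\widehat\om_0\|_{L^2}$. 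Since the numerator itself is $O(\rho)$, we actually have $\mu\in L^\infty$ so $\|\mu\|_{L^2}\lesssim\|\widehat\om_0\|_{L^2}$.

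For the first- and second-derivative bounds I differentiate the quotient using the chain rule. Derivatives of $\mathrm{A},\mathrm{B},\mathrm{C},\mathrm{D}$ fall on three kinds of factors: (i) the smooth weights $\rho(c), u'(y_c), u''(y_c)$, which are harmless in view of $u\in C^4$; (ii) the singular integrals $\mathrm{II}_1,\mathrm{II}_2,\mathrm{II}_3$, whose $c$-derivatives produce principal-value operators with kernels $(u(y)-u(y_c))^{-k}$ for $k=1,2,3$; these are bounded on $L^2$ by the appendix results; and (iii) the point-evaluation $\widehat\om_0(\al,y_c)$, which exchanges a $c$-derivative for a $y$-derivative up to the factor $1/u'(y_c)$, accounting for the $H^1$- and $H^2$-norms on the right-hand side. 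The bounds of Proposition \ref{prop:phi} on $\partial_y^\beta\partial_c^\gamma\phi_1$ feed into $\mathrm{II}_3$ and $\mathrm{II}_1$ and cost at most one power of $\al$ per $c$-derivative, which is exactly the loss recorded in the statement for $\|\partial_c\mu\|_{L^2}$ and $\|\partial_c^2(\rho\mu)\|_{L^2}$. The weighted estimate $\|\rho\partial_c^2(\rho\mu)\|_{L^2}\lesssim \|\widehat\om_0\|_{H^2}$ avoids the $\al$ loss because the prefactor $\rho$ absorbs precisely the worst singular contribution that would otherwise force one to spend a power of $\al$ on the pointwise bound $|\partial_c^2\phi_1|\le C\al^2\phi_1$.

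The main obstacle will be the second-derivative bound on the $\mathrm{II}_1(\widehat\om_0)$ term: applying $\partial_c^2$ lands on the kernel $\phi(z,c)^{-2}\phi_1(y,c)$ and produces an operator involving $\partial_c^2\phi_1$, which by Proposition \ref{prop:phi} alone would cost two factors of $\al$ and break the $H^2$ estimate. The remedy is to recognise the ``good derivative'' structure: after the change of variables $z=u(y_c)$, the dangerous combination $\partial_c^2$ on the $y_c$-dependence reorganises into $(\partial_z+\partial_c)^2\phi_1(u^{-1}(z),c)$, which by Proposition \ref{prop:phi-good} satisfies a bound of the form $C\al^2(y-y_c)^2\cosh\al(y-y_c)$. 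The extra factor $(y-y_c)^2$ cancels the double singularity $(u(y)-u(y_c))^{-2}$ arising from the kernel, leaving only a single $\al$ loss, as required.
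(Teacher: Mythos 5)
Your overall architecture (estimate $\mathrm{II}_1,\mathrm{II}_2,\mathrm{II}_3$ separately, assemble the quotient, and use the good derivative $(\partial_z+\partial_c)$ together with Proposition \ref{prop:phi-good} to avoid a double $\al$-loss on the $\mathrm{II}_1$ term) matches the paper's. But there is a genuine gap at the very first step: you declare that the denominator ``contributes only a harmless bounded factor'' once $\mathrm{A}^2+\mathrm{B}^2\ge c_1>0$, and then claim $|\mathrm{A}\mathrm{C}+\mathrm{B}\mathrm{D}|\lesssim \rho\big(|\widehat\om_0(\al,y_c)|+|\mathrm{II}_1(\widehat\om_0)|\big)$. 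That bound on the numerator is false, because $\mathrm{A}$ is \emph{not} $O(1)$: it contains the term $u'(y_c)\rho(c)\mathrm{II}_3$, and $|\mathrm{II}_3|\sim\al$, so $|\mathrm{A}|$ can be of size $\al\rho$. With only a constant lower bound on the denominator you would get $\|\rho\mu\|_{L^2}\lesssim \al\|\widehat\om_0\|_{L^2}$, which is off by a factor of $\al^2$ from the stated $\frac{C}{\al}\|\widehat\om_0\|_{L^2}$. The entire $\al$-gain in items 1--3 comes from the denominator actually being \emph{large}: the paper proves the two-sided bound $C^{-1}\al\le|\mathrm{II}_3(c)|\le C\al$ (the lower bound is a genuine computation, using the lower bound on $\phi_1-1$ from Lemma \ref{lem:phi1-est} and the sign $\phi_1\ge 1$), and combines it with the compactness bound $\mathrm{A}^2+\mathrm{B}^2\ge C^{-1}$ to obtain
\begin{equation*}
\mathrm{A}^2+\mathrm{B}^2\;\ge\; C^{-1}\big((1+\al\rho)^2+\rho^2\big),
\end{equation*}
and then uses $(1+\al\rho)^2+\rho^2\ge C^{-1}(\al\rho)^k$, $k=1,2$, to cancel the growth of the numerator and produce the extra $\al^{-1}$. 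Without this quantitative lower bound your argument cannot reach the stated estimates; it also matters for the $W^{1,2}$ and $W^{2,2}$ items, where $\partial_c(\mathrm{A}^2+\mathrm{B}^2)$ must be compared against $(\mathrm{A}^2+\mathrm{B}^2)^{3/2}$ rather than against a constant.

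A secondary, smaller issue: even the base lower bound $\mathrm{A}^2+\mathrm{B}^2\ge c_1$ with $c_1$ \emph{independent of $\al$} does not follow from continuity and compactness alone, since $\mathrm{A}$ and $\mathrm{B}$ depend on $\al$ through $\phi_1$; you need the sign information ($u(0)-u(1)<0$ and $u'(y_c)\rho\,\mathrm{II}_3\le 0$ reinforce each other) or the $\mathrm{II}_3$ lower bound to rule out cancellation for large $\al$. I would also flag that your phrase ``cost at most one power of $\al$ per $c$-derivative'' for $\mathrm{II}_3$ hides the fact that the paper must rewrite $\partial_c\mathrm{II}_3$ and $\partial_c^2\mathrm{II}_3$ using the good derivative $\partial_c+\frac{1}{u'}\partial_y$ and integrate by parts before the pointwise bounds of Proposition \ref{prop:phi} become usable; a naive application of $|\partial_c^2\phi_1|\le C\al^2\phi_1$ inside the kernel $(u(y)-c)^{-2}$ does not converge.
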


\begin{proof}
Let us first estimate $\mathrm{II}_1, \mathrm{II}_2$ and $\mathrm{II}_3$. Recall that
\beno
&&\mathrm{II}_1(\widehat{\om}_0)=\textrm{p.v.}\int_0^1\frac{\int_{y_c}^z\widehat{\omega}_0(\al,y')\phi_1(y',c)
dy'}{\phi(z,c)^2}dz,\\
&&\mathrm{II}_2=\textrm{p.v.}\int_0^1
\frac{u'(y)-u'(y_c)}{(u(y)-c)^2}dy,\\
&&\mathrm{II}_3=\int_0^1\frac{1}{(u(y)-c)^2}\Big(\frac{1}{\phi_{1}(y,c)^2}-1\Big)dy.
\eeno

By a change of variable, $\mathrm{II}_1$ can be written as
\beno
\mathrm{II}_1(\widehat{\om}_0)(c)=\mathcal{L}_1\big(\widehat{\omega}_0(\al,u^{-1})(u^{-1})'\big).
\eeno
Then Lemma \ref{lem:iden-L}, Lemma\ref{lem:B-H1} and Lemma\ref{lem:L-H1} ensure that
\ben
&&\big\|\mathrm{II}_1(\widehat{\om}_0)\big\|_{L^p}\le C\|\widehat{\omega}_0(\al,\cdot)\|_{L^p},\label{eq:pi1-L2}\\
&&\big\|\pa_c(\rho\mathrm{II}_1(\widehat{\om}_0))\big\|_{L^p}\le C\|\widehat{\omega}_0(\al,\cdot)\|_{W^{1,p}},\label{eq:pi1-H1}\\
&&\big\|\pa_c^2(\rho^2\mathrm{II}_1(\widehat{\om}_0))\big\|_{L^p}\leq C\|\widehat{\omega}_0(\al,\cdot)\|_{W^{2,p}}.\label{eq:pi1-H2}
\een

Let $y_c=u^{-1}(c)$ in the sequel.
We rewrite $\textrm{II}_2$ as
\begin{align*}
\mathrm{II}_2(c)=&\textrm{p.v.}\int_0^1\frac{\int_{y_c}^zu''(y)dy}{(u(z)-c)^2}dz\\
=&-\frac{1}{u'(y_c)}\int_0^1
\frac{(\int_0^1u''(z+t(z-y_c))dt)^2}
{(\int_0^1u'(z+t(z-y_c))dt)^2}dz
\\&+\frac{1}{u'(y_c)}\textrm{p.v.}\int_0^1
\frac{-u'(z)\int_{y_c}^zu''(y)dy}{(u(z)-c)^2}dz\\
\triangleq&\mathrm{II}_{2,1}+\frac{1}{u'(y_c)}\mathrm{II}_{2,2}.
\end{align*}
It's easy to see that
\beno
\|\mathrm{II}_{2,1}\|_{L^{\infty}}+
\|\partial_c\mathrm{II}_{2,1}\|_{L^{\infty}}+
\|\partial_{c}^2\mathrm{II}_{2,1}\|_{L^{\infty}}\leq C.
\eeno
We rewrite $\mathrm{II}_{2,2}$ as
\begin{align*}
\mathrm{II}_{2,2}(c)=&
\int_{u(0)}^{u(1)}\frac{\int_c^{c'}u''(u^{-1}(z))(u^{-1})'(z)dz}
{(c'-c)^2}dc'\\
=&-\chi_{D_0}(c)H\big(u''(u^{-1})(u^{-1})'\chi_{D_0}\big)(c)\\
&+\frac{1}{c-u(0)}\int_{u(0)}^cu''(u^{-1}(z))(u^{-1})'(z)dz
-\frac{1}{u(1)-c}\int_c^{u(0)}u''(u^{-1}(z))(u^{-1})'(z)dz,
\end{align*}
from which and Lemma \ref{lem:H-H2}, it follows that
\beno
\|\mathrm{II}_{2,2}\|_{L^{p}}+
\|\partial_c(\rho\mathrm{II}_{2,2})\|_{L^{p}}+
\|\partial_{c}^2(\rho^2\mathrm{II}_{2,2})\|_{L^{p}}\leq C.
\eeno
Thus, we deduce that for any $p\in(1,\infty)$,
\ben
&&\|\mathrm{II}_{2}\|_{L^p}+\|\partial_c(\rho\mathrm{II}_{2})\|_{L^p}
+\|\partial_c^2(\rho^2\mathrm{II}_{2})\|_{L^p}\leq C,\label{eq:pi2-Lp}\\
&&\|\rho\mathrm{II}_{2}\|_{L^{\infty}}+\|\partial_c(\rho^2\mathrm{II}_{2})\|_{L^{\infty}}\leq C.\label{eq:pi2-Linfty}
\een

We get by Lemma \ref{lem:phi1-est} that
\begin{align*}
|\mathrm{II}_3(c)|=&\int_0^1\frac{1}{(u(y)-c)^2}\Big(1-\frac{1}{\phi_{1}(y,c)^2}\Big)dy\\
\geq&C^{-1}\int_0^1\frac{1}{(y-y_c)^2}\Big(\frac{\phi_1^2(y,c)-1}{\phi_{1}(y,c)^2}\Big)dy\\
\geq&C^{-1}\al\int_{-\al y_c}^{\al(1-y_c)}\frac{1}{\sinh^2 y}\Big(\frac{\sinh y}{y}-1\Big)dy\\
\geq&C^{-1}\al\int_0^{1/2}\frac{1}{\sinh^2 y}\Big(\frac{\sinh y}{y}-1\Big)dy
\geq C^{-1}\al,
\end{align*}
and
\begin{align*}
|\mathrm{II}_3(c)|=&\int_0^1\frac{1}{(u(y)-c)^2}\Big(1-\frac{1}{\phi_{1}(y,c)^2}\Big)dy\\
\leq&C\al\int_{-\al y_c}^{\al(1-y_c)}\frac{1}{y\sinh y}\Big(\frac{\sinh y}{y}-1\Big)dy\\
\leq &C\al\Big(1+\int_{|y|>1}\frac{1}{y^2}+\frac{1}{y\sinh y}dy\Big)
\leq C\al.
\end{align*}
This gives
\ben
C^{-1}\al\leq |\mathrm{II}_3(c)|\leq C\al.\label{eq:pi3-Lower}
\een

By Remark \ref{rem:phi1}, we have
\begin{align*}
\pa_c\mathrm{II}_3(c)=&\pa_c\int_0^1\frac{1}{(u(y)-c)^2}\Big(\frac{1}{\phi_{1}(y,c)^2}-1\Big)dy\\
=&\int_0^1-\pa_y\Big(\frac{1}{(u(y)-c)^2}\Big)\frac{1}{u'(y)}\Big(\frac{1}{\phi_{1}(y,c)^2}-1\Big)dy\\
&+\int_0^1\frac{1}{(u(y)-c)^2}\pa_c\Big(\frac{1}{\phi_{1}(y,c)^2}-1\Big)dy\\
=&\frac{\al^2\mathcal{T}(\phi_1)(y,c)(\phi_1(y,c)+1)}{\phi_1(y,c)^2u'(y_c)\int_{0}^1u'(y_c+t(y-y_c))dt}\Big|_{y=0}^1\\
&+\int_0^1\frac{1}{(u(y)-c)^2}\big(\pa_c+\frac{1}{u'(y)}\pa_y\big)\Big(\frac{1}{\phi_{1}(y,c)^2}\Big)dy\\
&+\int_0^1\frac{1}{(u(y)-c)^2}\Big(\frac{1}{u'(y)}\Big)'\Big(\frac{1}{\phi_{1}(y,c)^2}-1\Big)dy,
\end{align*}
and
\begin{align*}
\pa_c^2\mathrm{II}_3(c)
=&\pa_c\bigg(\frac{\al^2\mathcal{T}(\phi_1)(y,c)(\phi_1(y,c)+1)}{\phi_1(y,c)^2u'(y_c)\int_{0}^1u'(y_c+t(y-y_c))dt}\Big|_{y=0}^1\bigg)\\
&+\int_0^1-\pa_y\Big(\frac{1}{(u(y)-c)^2}\Big)\frac{1}{u'(y)}\big(\pa_c+\frac{1}{u'(y)}\pa_y\big)\Big(\frac{1}{\phi_{1}(y,c)^2}\Big)dy\\
&+\int_0^1-\pa_y\Big(\frac{1}{(u(y)-c)^2}\Big)\frac{1}{u'(y)}\Big(\frac{1}{u'(y)}\Big)'\Big(\frac{1}{\phi_{1}(y,c)^2}-1\Big)dy\\
&+\int_0^1\Big(\frac{1}{(u(y)-c)^2}\Big)\pa_c\big(\pa_c+\frac{1}{u'(y)}\pa_y\big)\Big(\frac{1}{\phi_{1}(y,c)^2}\Big)dy\\
&+\int_0^1\frac{1}{(u(y)-c)^2}\Big(\frac{1}{u'(y)}\Big)'\pa_c\Big(\frac{1}{\phi_{1}(y,c)^2}-1\Big)dy\\
=&\pa_c\bigg(\frac{\al^2\mathcal{T}(\phi_1)(y,c)(\phi_1(y,c)+1)}{\phi_1(y,c)^2u'(y_c)\int_{0}^1u'(y_c+t(y-y_c))dt}\Big|_{y=0}^1\bigg)\\
&-\frac{1}{(u(y)-c)^2}\frac{1}{u'(y)}\big(\pa_c+\frac{1}{u'(y)}\pa_y\big)\Big(\frac{1}{\phi_{1}(y,c)^2}\Big)\Big|_{y=0}^1\\
&-\frac{1}{(u(y)-c)^2}\frac{1}{u'(y)}\Big(\frac{1}{u'(y)}\Big)'\Big(\frac{1}{\phi_{1}(y,c)^2}-1\Big)\Big|_{y=0}^1\\
&+\int_0^1\frac{1}{(u(y)-c)^2}\Big(\frac{1}{u'(y)}\Big)'\big(\pa_c+\frac{1}{u'(y)}\pa_y\big)\Big(\frac{1}{\phi_{1}(y,c)^2}\Big)dy\\
&+\int_0^1\frac{1}{(u(y)-c)^2}\big(\pa_c+\frac{1}{u'(y)}\pa_y\big)^2\Big(\frac{1}{\phi_{1}(y,c)^2}\Big)dy\\
&+\int_0^1\frac{1}{(u(y)-c)^2}\Big(\frac{1}{u'(y)}\Big(\frac{1}{u'(y)}\Big)'\Big)'\Big(\frac{1}{\phi_{1}(y,c)^2}-1\Big)dy\\
&+\int_0^1\frac{1}{(u(y)-c)^2}\Big(\frac{1}{u'(y)}\Big)'\big(\frac{1}{u'(y)}\pa_y+\pa_c\big)\Big(\frac{1}{\phi_{1}(y,c)^2}\Big)dy.
\end{align*}

Then by Remark \ref{rem:phi1} again, Proposition \ref{prop:phi} and Proposition \ref{prop:phi-good}, we obtain
\ben
\|\partial_{c}\mathrm{II}_3\|_{L^{\infty}}\leq C\al^2,\quad
\|\partial_{c}^2\mathrm{II}_3\|_{L^{\infty}}\leq C\al^3,\quad
\|\partial_{c}(\rho^2\pa_c\mathrm{II}_3)\|_{L^{\infty}}\leq C\al.\label{eq:pi3-H2}
\een

Now let us turn to the estimate of $\mu(c)$.
\begin{align*}
\mu(c)=&\frac{\pi\rho(c)\textrm{II}_3(c)\widehat{\omega}_0(\al,y_c)}{\mathrm{A}^2+\mathrm{B}^2}
-\frac{\pi\rho(c)\textrm{II}_2(c)\widehat{\omega}_0(\al,y_c)}{u'(y_c)(\mathrm{A}^2+\mathrm{B}^2)}\\
&+\frac{\pi(u(0)-u(1))\widehat{\omega}_0(\al,y_c)}{u'(y_c)(\mathrm{A}^2+\mathrm{B}^2)}
+\frac{\pi u''(y_c)\rho\textrm{II}_1(\widehat{\omega}_0)(c)}{u'(y_c)(\mathrm{A}^2+\mathrm{B}^2)}.
\end{align*}
By Lemma \ref{lem:spectrum}, $\mathrm{A}^2+\mathrm{B}^2\geq C^{-1}$. Thus, by (\ref{eq:pi3-Lower}), we get
\beno
\mathrm{A}^2+\mathrm{B}^2\geq C^{-1}((1+\al\rho)^2+\rho^2).
\eeno
Thus, by (\ref{eq:pi1-L2}), (\ref{eq:pi3-Lower}) and (\ref{eq:pi2-Linfty}), we get
\begin{align*}
\|\mu\|_{L^2}\leq& C\big(1+\frac{1}{\al}\|\textrm{II}_3\|_{L^{\infty}}+\|\rho\textrm{II}_2\|_{L^{\infty}}\big)\|\widehat{\om}_0(\al,\cdot)\|_{L^2}
+C\|\textrm{II}_1\|_{L^2}\leq C\|\widehat{\om}_0(\al,\cdot)\|_{L^2}.
\end{align*}
Using the inequality $(1+\al\rho)^2+\rho^2\geq C^{-1}(\rho\al)^k$ for $k=1, 2$, we can deduce that
\begin{align*}
\|\rho\mu\|_{L^2}\leq \frac{C}{\al}\|\widehat{\om}_0(\al,\cdot)\|_{L^2}.
\end{align*}

The $H^1$ estimate of $\mu(c)$ is similar. Here we just show the estimate of one term.
\begin{align*}
\pa_c\Big(\frac{\rho\textrm{II}_1(\widehat{\omega}_0)(c)}{\mathrm{A}^2+\mathrm{B}^2}\Big)
&=\frac{\pa_c(\rho\textrm{II}_1(\widehat{\omega}_0)(c))}{\mathrm{A}^2+\mathrm{B}^2}
-\frac{\rho\textrm{II}_1(\widehat{\omega}_0)(c)\pa_c(\mathrm{A}^2+\mathrm{B}^2)}{(\mathrm{A}^2+\mathrm{B}^2)^2}.
\end{align*}
We infer from (\ref{eq:pi1-H1}) and (\ref{eq:pi3-Lower}) that
\beno
\Big\|\frac{\pa_c(\rho\textrm{II}_1(\widehat{\omega}_0)(c))}{\mathrm{A}^2+\mathrm{B}^2}
\Big\|_{L^2}\leq C\|\widehat{\om}_0(\al,\cdot)\|_{L^2}.
\eeno
Notice that
\begin{align*}
|\pa_c(\mathrm{A}^2+\mathrm{B}^2)|\leq &\big|2\mathrm{A}\big(\pa_c(u'(y_c)\rho(c)\mathrm{II}_3)-\pa_c(\rho(c)\mathrm{II}_2)\big)
+2\mathrm{B}\pa_c\mathrm{B}\big|\\
\leq &C(\mathrm{A}^2+\mathrm{B}^2)^{1/2}\big(1+|\pa_c(\rho(c)\mathrm{II}_3)|+|\rho\mathrm{II}_3|+|\pa_c(\rho(c)\mathrm{II}_2)|\big)
\end{align*}
Thus, by (\ref{eq:pi1-L2}), (\ref{eq:pi3-H2}), (\ref{eq:pi2-Lp}) and Sobolev embedding, we get
\begin{align*}
&\Big\|\frac{\rho\textrm{II}_1(\widehat{\omega}_0)(c)\pa_c(\mathrm{A}^2+\mathrm{B}^2)}{(\mathrm{A}^2+\mathrm{B}^2)^2}\Big\|_{L^2}\\
&\leq C\Big\|\frac{\pa_c(\rho(c)\mathrm{II}_2)}{(\mathrm{A}^2+\mathrm{B}^2)^{3/2}}\Big\|_{L^4}\|\rho\textrm{II}_1(\widehat{\omega}_0)\|_{L^{4}}
+C\Big(\frac{1}{\al}\|\pa_c\mathrm{II}_3\|_{L^{\infty}}+\|\mathrm{II}_3\|_{L^{\infty}}\Big)\|\rho\textrm{II}_1(\widehat{\omega}_0)\|_{L^{2}}\\
&\leq C\al \|\widehat{\om}_0(\al,\cdot)\|_{H^1},
\end{align*}
which gives
\begin{align*}
\Big\|\pa_c\Big(\frac{\rho\textrm{II}_1(\widehat{\omega}_0)(c)}{\mathrm{A}^2+\mathrm{B}^2}\Big)\Big\|_{L^2}\leq C\al \|\widehat{\om}_0(\al,\cdot)\|_{H^1}.
\end{align*}
The $H^2$ estimate of $\mu(c)$ is similar. We left it to the interested readers.
\end{proof}

\section{Uniform Sobolev estimates of the vorticity}

Recall that $\widehat{\om}(t,\al,y)$ satisfies
\beno
\left\{
\begin{aligned}
&\partial_t\widehat{\omega}+i\al u\widehat{\omega}+i\al u''\widehat{\psi}=0,\\
&\widehat{\omega}|_{t=0}=\mathcal{F}\om_0(\al,y).
\end{aligned}
\right.
\eeno
This is equivalent to
\beno
\Big(e^{i\al t u(y)}\widehat{\omega}(t,\al,y)\Big)_t=-i\al e^{i\al t u(y)}u''(y)\widehat{\psi}(t,\al,y).
\eeno
Integration in $t$ gives
\beno
e^{i\al t u(y)}\widehat{\omega}(t,\al,y)=\widehat{\om}_0(\al,y)-i\al u''(y)\int_0^te^{i\tau u(y)}\widehat{\psi}(\tau,\al,y)d\tau.
\eeno
We denote $W(t,x,y)\triangleq\omega(t,x+u(y)t,y).$ We find that
\beno
\widehat{W}(t,\al,y)=e^{i\al t u(y)}\widehat{\omega}(t,\al,y).
\eeno

Then we get by (\ref{eq:psi-e}) that
\begin{align*}
\widehat{W}(t,\al,y)&=\widehat{\om}_0(\al,y)-\frac{u''(y)}{\pi}\int_{u(0)}^{u(1)}\frac{(e^{it(u(y)-c)\al}-1)\rho(c)\mu(c)\G(y,c)}{u(y)-c}dc\\
&\triangleq \widehat{\om}_0(\al,y)-\frac{u''(y)}{\pi}\mathbf{T}(\mu)(t,y),
\end{align*}
where
\beno
\G(y,c)=
\left\{
\begin{aligned}
&\phi(y,c)\int_0^y\frac{1}{\phi(z,c)^2}dz\triangleq \G_0(y,c)\quad0\leq y<y_c,\\
&\phi(y,c)\int_1^y\frac{1}{\phi(z,c)^2}dz\triangleq \G_1(y,c)\quad y_c<y\leq 1.
\end{aligned}
\right.
\eeno

We have the following uniform estimates in Sobolev space for $W(t,x,y)$.

\begin{proposition}\label{prop:vorticity}
With the same assumptions as in Theorem \ref{Thm:main}, it holds that
\beno
&&\|W(t)\|_{H^{-1}_xL^2_{y}}\le C\|w_0\|_{H^{-1}_xL^2_{y}},\\
&&\|W(t)\|_{H^{-1}_xH^1_y}\le C\|w_0\|_{H^{-1}_xH^1_y},\\
&&\|\rho(u(y))\pa_y^2W(t)\|_{H^{-1}_xL^2_{y}}\le C\|w_0\|_{H^{-1}_xH^2_y}.
\eeno
\end{proposition}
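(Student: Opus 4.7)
Using the identity $\widehat{W}(t,\alpha,y)=\widehat{\omega}_0(\alpha,y)-\frac{u''(y)}{\pi}\mathbf{T}(\mu)(t,y)$ and Plancherel in $x$, the three inequalities reduce to uniform-in-$(t,\alpha)$ bounds
\begin{align*}
\|\mathbf{T}(\mu)(t,\cdot)\|_{L^2_y}&\le C\|\widehat{\omega}_0(\alpha,\cdot)\|_{L^2_y},\\
\|\partial_y\mathbf{T}(\mu)(t,\cdot)\|_{L^2_y}&\le C\|\widehat{\omega}_0(\alpha,\cdot)\|_{H^1_y},\\
\|\rho(u(y))\partial_y^2\mathbf{T}(\mu)(t,\cdot)\|_{L^2_y}&\le C\|\widehat{\omega}_0(\alpha,\cdot)\|_{H^2_y}.
\end{align*}
The $\widehat{\omega}_0$ contribution is harmless; the real work lies with $\mathbf{T}(\mu)$.

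The plan is to combine three ingredients: the Sobolev regularity of $\mu(c)$ in $c$ furnished by Proposition \ref{prop:mu} (where the no-embedding-eigenvalue hypothesis enters through the weighted $H^2$ estimate); the pointwise and derivative bounds on $\Gamma(y,c)$ obtained from its explicit Green-function form together with Propositions \ref{prop:phi} and \ref{prop:phi-good} (in particular $|\Gamma|\le C$, while $\partial_y\Gamma$ and $\partial_y^2\Gamma$ carry at worst a $\log|y-y_c|^{-1}$ singularity at the critical layer); and the uniform $L^p\to L^p$ bounds of the singular-integral operators with kernels of the form $(e^{i\beta(z-z')}-1)/(z-z')$ and their weighted, logarithmic variants established in the appendix. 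After the change of variables $c=u(z)$, the operator $\mathbf{T}(\mu)$ becomes an integral with a Hilbert-type kernel acting on $\rho(u(z))\mu(u(z))\Gamma(y,u(z))$, and the $L^2_y$ bound follows directly from these tools together with the $L^2$ estimate of $\rho\mu$ (the factor $|\alpha|^{-1}$ in Proposition \ref{prop:mu} absorbs the size of the kernel).

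For the $\partial_y$- and $\partial_y^2$-estimates I differentiate under the integral sign. Derivatives falling on the exponential are turned into $c$-derivatives via $\partial_y e^{it\alpha(u(y)-c)}=-\frac{u'(y)}{u'(u^{-1}(c))}\partial_c e^{it\alpha(u(y)-c)}$ and transferred onto $\rho\mu$ by integration by parts (using Proposition \ref{prop:mu}, parts 2 and 3); derivatives falling on $\Gamma$ are controlled pointwise by Proposition \ref{prop:phi}; and derivatives falling on the kernel itself are handled by the good-derivative identity of Proposition \ref{prop:phi-good}, which supplies the cancellation $(\partial_y+\partial_c)\phi_1(u^{-1}(z),c)=O(\alpha^2(y-y_c)^2\phi_1)$ needed to avoid the critical-layer singularity. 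For the weighted $\partial_y^2$-estimate the prefactor $\rho(u(y))$ is precisely what is needed to absorb the logarithmic singularity of $\partial_y^2\Gamma$, in combination with the weighted bound $\|\rho\,\partial_c^2(\rho\mu)\|_{L^2}\le C\|\widehat{\omega}_0\|_{H^2}$.

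The main obstacle is the weighted $H^2$ estimate: both $\partial_y^2\Gamma$ and $\partial_c^2(\rho\mu)$ are borderline non-$L^2$ near the critical layer, and a naive differentiation produces non-integrable kernels. The point is that after all the cancellations from the good derivative $\partial_y+\partial_c$ and the appropriate integrations by parts, the remaining singular integrals are of exactly the type handled in the appendix, and the extra weight $\rho(u(y))$ on the left-hand side matches the extra $\rho$ appearing on $\mu$ in Proposition \ref{prop:mu}. Without the no-embedding-eigenvalue assumption, the denominator $\mathrm{A}^2+\mathrm{B}^2$ in the definition of $\mu$ could vanish and this mechanism would collapse; its strict positivity (Lemma \ref{lem:spectrum}) is what makes the whole argument close.
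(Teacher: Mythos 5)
Your overall architecture (reduce to fixed $\alpha$, feed the Sobolev bounds on $\rho\mu$ from Proposition \ref{prop:mu} into singular-integral bounds from the appendix, and use the good derivative $\partial_y+\partial_c$ for cancellation at the critical layer) matches the paper's, but the execution you describe for the derivative estimates has a genuine gap. First, your singularity count is wrong: since $\G(y,c)\sim a+b(y-y_c)\log|y-y_c|$ near $y_c$, the first $y$-derivative is logarithmic but the second behaves like $1/(y-y_c)$ — this is exactly the obstruction highlighted in Section 2 ($\pa_c^2\widetilde{\Phi}\sim 1/(y-y_c)\notin L^1$), and it is not "at worst logarithmic." Second, the weight $\rho(u(y))=(u(y)-u(0))(u(1)-u(y))$ vanishes only at $y=0,1$, not at the critical layer $y=y_c$, so it cannot absorb any singularity there; its actual role is to control the boundary operators $\mathbb{B}_0,\mathbb{B}_1$ (consistent with Zillinger's boundary blow-up of the unweighted $H^2$ norm). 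Consequently your proposed mechanism for the weighted $\partial_y^2$ estimate — differentiate under the integral, bound $\partial_y^2\G$ pointwise, and let $\rho(u(y))$ soak up the singularity — does not close: the resulting $c$-integral is not absolutely convergent and no pointwise bound can save it.

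What the paper does instead is a duality argument: it pairs $\mathbf{T}(\rho\mu)$ with a test function $\varphi$, integrates the $y$-derivatives onto $\varphi$, performs the $y$-integration first so that the expression becomes $\int \rho\mu\cdot \mathbb{T}\big((u^{-1})',\varphi'\circ u^{-1},\dots\big)dc$, and then uses the algebraic identities of Lemmas \ref{Lem:iden-T1}, \ref{lem:iden-T2} and \ref{lem:iden-L} to trade the derivative on $\varphi$ for a $\partial_c$ acting on the whole multilinear expression (which is then integrated by parts onto $\rho\mu$), plus good-derivative terms $(\partial_z+\partial_c)$ on $\phi_1$ and $1/\phi_1^2$, plus boundary terms. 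All critical-layer singularities are then principal values handled by Lemmas \ref{lem:T-L2}--\ref{lem:L-com}. Your plan contains the right ingredients but omits this duality/identity step, which is the only place where the $1/(y-y_c)$ singularity is actually defused; as written, the $H^2$ part of your argument would fail.
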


\begin{proof}
The proof is split into three steps.\medskip

{\bf Step 1.} $L^2$ estimate\medskip

Let $\varphi(y)\in C_0^{\infty}(0,1)$ with $\|\varphi\|_{L^2}=1$. A direct calculation gives
\begin{align*}
&\int_0^1\mathbf{T}(\mu)(t,y)\varphi(y)dy\\
&=\int_0^1\int_{u(0)}^{u(1)}\frac{(e^{it(u(y)-c)\al}-1)\rho(c)\mu(c)\G(y,c)}{u(y)-c}dc\varphi(y)dy\\
&=\int_0^1\int_{u(0)}^{u(y)}\frac{(e^{it(u(y)-c)\al}-1)\rho(c)\mu(c)\G(y,c)}{u(y)-c}dc\varphi(y)dy\\
&\quad+\int_0^1\int_{u(y)}^{u(1)}\frac{(e^{it(u(y)-c)\al}-1)\rho(c)\mu(c)\G(y,c)}{u(y)-c}dc\varphi(y)dy\\
&=\int_{u(0)}^{u(1)}\rho(c)\mu(c)\int_{y_c}^1\frac{(e^{it(u(y)-c)\al}-1)\G_1(y,c)\varphi(y)}{u(y)-c}dydc\\
&\quad+\int_{u(0)}^{u(1)}\rho(c)\mu(c)\int_{0}^{y_c}\frac{(e^{it(u(y)-c)\al}-1)\G_0(y,c)\varphi(y)}{u(y)-c}dydc\\
&=\int_{u(0)}^{u(1)}\rho(c)\mu(c)\int_{y_c}^1(e^{it(u(y)-c)\al}-1)\phi_1(y,c)\int_1^y\frac{1}{\phi(z,c)^2}dz\varphi(y)dydc\\
&\quad+\int_{u(0)}^{u(1)}\rho(c)\mu(c)\int_{0}^{y_c}(e^{it(u(y)-c)\al}-1)\phi_1(y,c)\int_0^y\frac{1}{\phi(z,c)^2}dz\varphi(y)dydc\\
&=\int_{u(0)}^{u(1)}\rho(c)\mu(c)\int_0^1\frac{-1}{\phi(z,c)^2}
\int_{y_c}^z(e^{it(u(y)-c)\al}-1)\varphi(y)\phi_1(y,c)dydzdc\\
&=-\int_{u(0)}^{u(1)}\rho(c)\mu(c)e^{-i\al ct}\mathbb{T}\Big((u^{-1})',\, \varphi\circ u^{-1}e^{it\al z}(u^{-1})',\,\phi_1(u^{-1},c),\,\frac{1}{\phi_1(u^{-1},c)^2}\Big)(c)dc\\
&\quad+\int_{u(0)}^{u(1)}\rho(c)\mu(c)\mathbb{T}\Big((u^{-1})', \varphi\circ u^{-1}(u^{-1})',\,\phi_1(u^{-1},c),\,\frac{1}{\phi_1(u^{-1},c)^2}\Big)(c)dc.
\end{align*}
Then by Lemma \ref{lem:T-L2} and Proposition \ref{prop:mu}, we get
\begin{align*}
&\int_0^1\mathbf{T}(\mu)(t,y)\varphi(y)dy\leq C\|\rho(c)\mu(c)\|_{L^2}\|\varphi\|_{L^2}\leq C\|\widehat{\omega}_0(\al,\cdot)\|_{L^2}.
\end{align*}
This gives
\beno
\|{W}(t)\|_{H^{-1}_xL^2_{y}}\leq C\|{\omega}_0\|_{H^{-1}_xL^2_{y}}.
\eeno

{\bf Step 2.} $H^1$ estimate\medskip

Let $\varphi(y)\in C_0^{\infty}(0,1)$ with $\|\varphi\|_{L^2}=1$. Then we have
\begin{align*}
&\int_0^1\partial_y\mathbf{T}(\rho\mu)(t,y)\varphi(y)dy
=-\int_0^1\mathbf{T}(\rho\mu)(t,y)\varphi'(y)dy\\
&=\int_{u(0)}^{u(1)}\rho(c)\mu(c)\int_0^1\frac{1}{\phi(y',c)^2}
\int_{y_c}^{y'}(e^{it(u(y)-c)\al}-1)\varphi'(y)\phi_1(y,c)dydy'dc\\
&=\int_{u(0)}^{u(1)}\rho(c)\mu(c)\mathbb{T}\Big((u^{-1})',\,(\varphi\circ u^{-1})',\,e^{it\al (z-c)}\phi_1(u^{-1},c),\,\frac{1}{\phi_1(u^{-1},c)^2}\Big)(c)dc\\
&\quad-\int_{u(0)}^{u(1)}\rho(c)\mu(c)\mathbb{T}\Big((u^{-1})',\,(\varphi\circ u^{-1})',\,\phi_1(u^{-1},c),\,\frac{1}{\phi_1(u^{-1},c)^2}\Big)(c)dc.
\end{align*}
We get by Lemma \ref{lem:iden-T2} that
\begin{align*}
&\int_0^1\partial_y\mathbf{T}(\rho\mu(\cdot))(t,y)\varphi(y)dy\\
&=-\int_{u(0)}^{u(1)}\rho(c)\mu(c)\partial_c\mathcal{L}_1(\varphi\circ u^{-1})(c)dc
+\int_{u(0)}^{u(1)}\rho(c)\mu(c)\mathcal{L}_2(\varphi\circ u^{-1})(c)dc\\
&\quad+\int_{u(0)}^{u(1)}\rho(c)\mu(c)\partial_c\big(e^{-it\al c}\mathcal{L}_1\big(\varphi\circ u^{-1}e^{it\al z}\big)(c)\big)dc
\\&\quad-\int_{u(0)}^{u(1)}\rho(c)\mu(c)e^{-it\al c}\mathcal{L}_2\big(\varphi\circ u^{-1}e^{it\al z}\big)(c)dc\\
&\quad+\int_{u(0)}^{u(1)}\rho(c)\mu(c)e^{-it\al c}\mathcal{B}\big(\varphi\circ u^{-1}e^{it\al z}\big)(c)dc
-\int_{u(0)}^{u(1)}\rho(c)\mu(c)\mathcal{B}(\varphi\circ u^{-1})(c)dc\\
&=\int_{u(0)}^{u(1)}\partial_c\big(\rho(c)\mu(c)\big)
\mathcal{L}_1(\varphi\circ u^{-1})(c)dc
+\int_{u(0)}^{u(1)}\rho(c)\mu(c)\mathcal{L}_2(\varphi\circ u^{-1})(c)dc\\
&\quad-\int_{u(0)}^{u(1)}\partial_c\big(\rho(c)\mu(c)\big)\big(e^{-it\al c}\mathcal{L}_1\big(\varphi\circ u^{-1}(z)e^{it\al z}\big)(c)\big)dc\\
&\quad-\int_{u(0)}^{u(1)}\rho(c)\mu(c)e^{-it\al c}\mathcal{L}_2\big(\varphi\circ u^{-1}e^{it\al z}\big)(c)dc\\
&\quad+\int_{u(0)}^{u(1)}\rho(c)\mu(c)e^{-it\al c}\mathcal{B}\big(\varphi\circ u^{-1}(z)e^{it\al z}\big)(c)dc
-\int_{u(0)}^{u(1)}\rho(c)\mu(c)\mathcal{B}(\varphi\circ u^{-1})(c)dc.
\end{align*}
Thus, by Lemma \ref{lem:L-H1}, Lemma \ref{lem:B-H1} and Proposition \ref{prop:mu}, we obtain
\begin{align*}
&\int_0^1\partial_y\mathbf{T}(\rho\mu(\cdot))(t,y)\varphi(y)dy\\
&\leq \|\partial_c\big(\rho(c)\mu(c)\big)\|_{L^2}\Big(\|\mathcal{L}_1(\varphi\circ u^{-1})\|_{L^2}
+\|\mathcal{L}_1(e^{it\al z}\varphi\circ u^{-1})\|_{L^2}\Big)\\
&\quad+C\|\rho\mu\|_{L^2}\Big(\|\mathcal{L}_2(\varphi\circ u^{-1})\|_{L^2}
+\|\mathcal{L}_2(e^{it\al z}\varphi\circ u^{-1})\|_{L^2}\\
&\qquad+\|\mathcal{B}(\varphi\circ u^{-1})\|_{L^2}
+\|\mathcal{B}(e^{it\al z}\varphi\circ u^{-1})\|_{L^2}\Big)\\
&\leq C\|\partial_c\big(\rho(c)\mu(c)\big)\|_{L^2}+C\|\rho\mu\|_{L^2}\le C\|\widehat{\omega_0}(\al,\cdot)\|_{H^1}.
\end{align*}
This gives
\beno
\|W(t)\|_{H^{-1}_xH^1_y}\leq C\|\om_0\|_{H^{-1}_xH^1_y}.
\eeno

{\bf Step 3.} $H^2$ estimate\medskip

Let $\varphi(y)\in C_0^{\infty}(0,1)$ with $\|\varphi\|_{L^2}=1$. Then we have
\begin{align*}
&\int_0^1\rho(u(y))\partial_y^2\mathbf{T}(\rho\mu(\cdot))(t,y)\varphi(y)dy\\
&=\int_0^1\mathbf{T}(\rho\mu(\cdot))(t,y)\partial_y^2\big(\varphi(y)\rho(u(y))\big)dy\\
&=-\int_{u(0)}^{u(1)}\rho(c)\mu(c)\int_0^1\frac{1}{\phi(y',c)^2}
\int_{y_c}^{y'}(e^{it(u(y)-c)\al}-1)\partial_y^2\big(\varphi(y)\rho(u(y))\big)\phi_1(y,c)dydy'dc,
\end{align*}
where
\begin{align*}
&\int_0^1\frac{1}{\phi^2(z,c)}
\int_{x_0}^z(e^{it(u(y)-c)\al}-1)\partial_y^2\big(\varphi(y)\rho(u(y))\big)\phi_1(y,c)dydz\\
&=\bbT\bigg((u^{-1})',\,\Big(\partial_y^2\big(\varphi(y)\rho(u(y))\big)\circ u^{-1}\Big)(u^{-1})',\,(e^{it(z-c)\al}-1)\phi_1(u^{-1}(z),c),\frac{1}{\phi_1(u^{-1}(z),c)^2}\bigg).
\end{align*}
We denote $F(z,c)=\phi_1(u^{-1}(z),c)$, $G(z,c)=\frac{1}{\phi_1(u^{-1}(z),c)^2}$ and $h(z)=\varphi(u^{-1}(z))\rho(z)$.
Using the fact that
\beno
(f\circ g)'=(f'\circ g)g',\quad
(f\circ g)''=(f''\circ g)(g')^2+\frac{(f\circ g)'g''}{g'},
\eeno
we deduce from Lemma \ref{Lem:iden-T1} and Lemma \ref{lem:iden-T2} that
\begin{align*}
&\rho(c)\int_0^1\frac{1}{\phi^2(z,c)}
\int_{x_0}^z(e^{it(u(y)-c)\al}-1)\partial_y^2\big(\varphi(y)\rho(u(y))\big)\phi_1(y,c)dydz\\
&=\rho(c)\bbT\Big((u^{-1})',\,h'',\,(e^{it(z-c)\al}-1)\frac{1}{(u^{-1})'}F,\,G\Big)\\
&\quad-\rho(c)\bbT\Big((u^{-1})',\,h',\,(e^{it(z-c)\al}-1)\frac{(u^{-1})''}{((u^{-1})')^2}F,\,G\Big)\\
&=\rho(c)\partial_c\bbT\Big((u^{-1})',\,h',\,(e^{it(z-c)\al}-1)\frac{1}{(u^{-1})'}F,\,G\Big)\\
&\quad-\rho(c)\bbT\Big((u^{-1})',\,h',\,(e^{it(z-c)\al}-1)(\partial_y+\partial_c)\big(\frac{1}{(u^{-1})'}F\big),\,G\Big)\\
&\quad-\rho(c)\bbT\Big((u^{-1})',\,h',\,(e^{it(z-c)\al}-1)\frac{1}{(u^{-1})'}F,\,(\partial_z+\partial_c)G\Big)\\
&\quad-\rho(c)\bbT\Big((u^{-1})'',\,h',\,(e^{it(z-c)\al}-1)\frac{1}{(u^{-1})'}F,\,G\Big)\\
&\quad+\mathbb{B}_0\Big((u^{-1})',\,h',\,(e^{it(z-c)\al}-1)\frac{1}{(u^{-1})'}F,\,G\Big)\\
&\quad+\mathbb{B}_1\Big((u^{-1})',\,h',\,(e^{it(z-c)\al}-1)\frac{1}{(u^{-1})'}F,\,G\Big)\\
&\quad-\rho(c)\partial_c\bbT\Big((u^{-1})',\,h,\,(e^{it(z-c)\al}-1)\frac{(u^{-1})''}{((u^{-1})')^2}F,\,G\Big)\\
&\quad+\rho(c)\bbT\Big((u^{-1})',\,h,\,(e^{it(z-c)\al}-1)(\partial_z+\partial_c)\big(\frac{(u^{-1})''}{((u^{-1})')^2}F\big),\,G\Big)\\
&\quad+\rho(c)\bbT\Big((u^{-1})',\,h,\,(e^{it(z-c)\al}-1)\frac{(u^{-1})''}{((u^{-1})')^2}F,\,(\partial_z+\partial_c)G\Big)\\
&\quad+\rho(c)\bbT\Big((u^{-1})'',\,h,\,(e^{it(z-c)\al}-1)\frac{(u^{-1})''}{((u^{-1})')^2}F,\,G\Big)\\
&\quad-\mathbb{B}_0\Big((u^{-1})',\,h,\,(e^{it(z-c)\al}-1)\frac{(u^{-1})''}{((u^{-1})')^2}F,\,G\Big)\\
&\quad-\mathbb{B}_1\Big((u^{-1})',\,h,\,(e^{it(z-c)\al}-1)\frac{(u^{-1})''}{((u^{-1})')^2}F,\,G\Big).
\end{align*}
This gives
\begin{align*}
&\rho(c)\int_0^1\frac{1}{\phi^2(z,c)}
\int_{x_0}^z(e^{it(u(y)-c)\al}-1)\partial_y^2\big(\varphi(y)\rho(u(y))\big)\phi_1(y,c)dydz\\
&=\rho(c)\partial_c\bbT\Big((u^{-1})',\,h',\,(e^{it(z-c)\al}-1)\frac{1}{(u^{-1})'}F,\,G\Big)\\
&\quad+\mathbb{B}_0\Big((u^{-1})',\,h',\,(e^{it(z-c)\al}-1)\frac{1}{(u^{-1})'}F,\,G\Big)
+\mathbb{B}_1\Big((u^{-1})',\,h',\,(e^{it(z-c)\al}-1)\frac{1}{(u^{-1})'}F,\,G\Big)\\
&\quad+\Xi_1(h)+\rho(c)\partial_c\Xi_2(h),
\end{align*}
where
\begin{align*}
\Xi_1(h)&=-e^{-it\al c}\mathcal{B}\Big(he^{itz}\frac{(u^{-1})''}{((u^{-1})')^2}\Big)
+\mathcal{B}\Big(h\frac{(u^{-1})''}{((u^{-1})')^2}\Big)\\
&\quad+\rho(c)e^{-it\al c}\mathcal{L}_2\Big(he^{itz}\frac{(u^{-1})''}
{((u^{-1})')^2}\Big)
-\rho(c)\mathcal{L}_2\Big(h\frac{(u^{-1})''}{((u^{-1})')^2}\Big)\\
&\quad+\rho(c)e^{-it\al c}\mathcal{L}_1\Big(he^{itz}\big(\frac{(u^{-1})''}
{((u^{-1})')^2}\big)'\Big)
-\rho(c)\mathcal{L}_1\Big(h\big(\frac{(u^{-1})''}
{((u^{-1})')^2}\big)'\Big)\\
&\quad+\rho(c)e^{-it\al c}\mathcal{L}_3\Big(he^{itz}\frac{1}{(u^{-1})'}\Big)
-\rho(c)\mathcal{L}_3\Big(h\frac{1}{(u^{-1})'}\Big)\\
&\quad-e^{-it\al c}\mathcal{B}_1\Big(h(z)e^{itz}\frac{1}{(u^{-1})'}\Big)
+\mathcal{B}_1\Big(h(z)\frac{1}{(u^{-1})'}\Big)\\
&\quad+\rho(c)e^{-it\al c}\mathcal{L}_1\Big(h(z)e^{itz}\big(\frac{1}{(u^{-1})'}\big)''\Big)
-\rho(c)\mathcal{L}_1\Big(h\big(\frac{1}{(u^{-1})'}\big)''\Big),
\end{align*}
and
\begin{align*}
\Xi_2(h)=&
e^{-it\al c}\mathcal{L}_1\Big(h(z)e^{itz}\frac{(u^{-1})''}
{((u^{-1})')^2}\Big)
-\mathcal{L}_1\Big(h\frac{(u^{-1})''}
{((u^{-1})')^2}\Big)\\
&\quad-e^{-it\al c}\mathcal{L}_2\Big(he^{itz}\frac{1}{(u^{-1})'}\Big)
-\mathcal{L}_2\Big(h\frac{1}{(u^{-1})'}\Big)\\
&\quad-e^{-it\al c}\mathcal{L}_2\Big(he^{itz}\big(\frac{1}{(u^{-1})'}\big)'\Big)
-\mathcal{L}_2\Big(h\big(\frac{1}{(u^{-1})'}\big)'\Big).
\end{align*}

By Lemma \ref{lem:L-H1} and Lemma \ref{lem:B-H1}, we have
\ben\label{eq:The1}
\|\Xi_1(h)\|_{L^2}+\|\Xi_2(h)\|_{L^2}\leq C\|h\|_{L^2}\le C\|\varphi\|_{L^2}.
\een

Using Lemma \ref{lem:iden-B01}, we write
\begin{align*}
&\mathbb{B}_0\Big((u^{-1})',\,h',\,(e^{it(z-c)\al}-1)\frac{1}{(u^{-1})'}F,\,G\Big)\\
&\quad+\mathbb{B}_1\Big((u^{-1})',\,h',\,(e^{it(z-c)\al}-1)\frac{1}{(u^{-1})'}F,\,G\Big)\\
&=\partial_c\mathbb{B}_0\Big((u^{-1})',\,h(e^{it(z-c)\al}-1)\frac{1}{(u^{-1})'},\,F,\,G\Big)\\
&\quad+\partial_c\mathbb{B}_1\Big((u^{-1})',\,h(e^{it(z-c)\al}-1)\frac{1}{(u^{-1})'},\,F,\,G\Big)\\
&\quad-\mathbb{B}_0\Big((u^{-1})',\,h,\,(e^{it(z-c)\al}-1)\frac{1}{(u^{-1})'}F,\,\partial_cG\Big)\\
&\quad-\mathbb{B}_1\Big((u^{-1})',\,h,\,(e^{it(z-c)\al}-1)\frac{1}{(u^{-1})'}F,\,\partial_cG\Big)\\
&\quad-\mathbb{B}_0\Big((u^{-1})',\,h(e^{it(z-c)\al}-1)\frac{1}{(u^{-1})'},\,(\partial_z+\partial_c)F,\,\partial_cG\Big)\\
&\quad-\mathbb{B}_1\Big((u^{-1})',\,h(e^{it(z-c)\al}-1)\frac{1}{(u^{-1})'},\,(\partial_z+\partial_c)F,\,\partial_cG\Big)\\
&\quad-\mathbb{B}_0\Big((u^{-1})',\,h(e^{it(z-c)\al}-1)\big(\frac{1}{(u^{-1})'}\big)',\,F,\,\partial_cG\Big)\\
&\quad-\mathbb{B}_1\Big((u^{-1})',\,h(e^{it(z-c)\al}-1)\big(\frac{1}{(u^{-1})'}\big)',\,F,\,\partial_cG\Big)\\
&\quad+\frac{(u^{-1})'(u(0))G(u(0),c)}{c-u(0)}\int_{u(0)}^ch(z)(e^{it(z-c)\al}-1)\frac{1}{(u^{-1})'}F(z,c)dz\\
&\quad+\frac{(u^{-1})'(u(1))G(u(1),c)}{u(1)-c}\int_c^{u(1)}h(z)(e^{it(z-c)\al}-1)\frac{1}{(u^{-1})'}F(z,c)dz\\
&\triangleq \partial_c\Xi_4(h)+\Xi_3(h).
\end{align*}
It follows from Lemma \ref{lem:B01-L2} that
\ben\label{eq:The34}
\|\Xi_3(h)\|_{L^2}\leq C\|\varphi\|_{L^2},\quad
\|\Xi_4(h)\|_{L^2}\leq \frac{C}{\al}\|\varphi\|_{L^2}.
\een

Using Lemma \ref{Lem:iden-T1}, we write
\begin{align*}
&\rho(c)\partial_c\bbT\Big((u^{-1})',\,h',\,(e^{it(z-c)\al}-1)\frac{1}{(u^{-1})'}F,\,G\Big)\\
&=\rho(c)\partial_c^2\bbT\Big((u^{-1})',\,h(e^{it(z-c)\al}-1)\frac{1}{(u^{-1})'},\,F,\,G\Big)\\
&\quad-\rho(c)\partial_c\bbT\Big((u^{-1})',\,h,\,(e^{it(z-c)\al}-1)(\partial_c+\partial_y)\big(\frac{1}{(u^{-1})'}F\big),\,G\Big)\\
&\quad-\rho(c)\partial_c\bbT\Big((u^{-1})',\,h,\,(e^{it(z-c)\al}-1)\frac{1}{(u^{-1})'}F,\,(\partial_c+\partial_z)G\Big)\\
&\quad-\rho(c)\partial_c\bbT\Big((u^{-1})'',\,h,\,(e^{it(z-c)\al}-1)\frac{1}{(u^{-1})'}F,\,G\Big)\\
&\quad+\rho(c)\partial_c\frac{1}{\rho}\mathbb{B}_0\Big((u^{-1})',\,h,\,(e^{it(z-c)\al}-1)\frac{1}{(u^{-1})'}F,\,G\Big)\\
&\quad+\rho(c)\partial_c\frac{1}{\rho}\mathbb{B}_1\Big((u^{-1})',\,h,\,(e^{it(z-c)\al}-1)\frac{1}{(u^{-1})'}F,\,G\Big)\\
&=\rho(c)\partial_c^2[\mathcal{L}_1,\rho]\Big(\varphi(e^{it(z-c)\al}-1)\frac{1}{(u^{-1})'}\Big)
+\rho(c)\partial_c^2\bigg(\rho\mathcal{L}_1\Big(\varphi(e^{it(z-c)\al}-1)\frac{1}{(u^{-1})'}\Big)\bigg)\\
&\quad-\rho(c)\partial_c\mathcal{L}_2\Big(h(e^{it(z-c)\al}-1)\frac{1}{(u^{-1})'}\Big)
-\rho(c)\partial_c\mathcal{L}_1\Big(h(e^{it(z-c)\al}-1)\big(\frac{1}{(u^{-1})'}\big)'\Big)\\
&\quad+\rho(c)\partial_c\frac{1}{\rho}\mathcal{B}\Big(h(e^{it(z-c)\al}-1)\frac{1}{(u^{-1})'}\Big)\\
&\triangleq\rho(c)\partial_c\Xi_5(\varphi)
+\rho\partial_c^2(\rho\Xi_6(\varphi))
+\rho\partial_c\Xi_7(h).
\end{align*}
This together with Lemma \ref{lem:L-com}, Lemma \ref{lem:L-H1} and Lemma \ref{lem:B-H1} shows that
\ben\label{eq:The567}
\|\Xi_5(\varphi)\|_{L^2}+\|\Xi_6(\varphi)\|_{L^2}
+\|\Xi_7(h)\|_{L^2}\leq C\|\varphi\|_{L^2}.
\een
Thus, we obtain
\begin{align*}
&\int_0^1\rho(u(y))\partial_y^2\mathbf{T}(\rho\mu(\cdot))(t,y)\varphi(y)dy\\
&=-\int_{u(0)}^{u(1)}\mu(c)\Big(\Xi_1(h)
+\rho(c)\partial_c\Xi_2(h)
+\Xi_3(h)
+\partial_c\Xi_4(h)\\
&\quad+\rho(c)\partial_c\Xi_5(\varphi)
+\rho\partial_c^2(\rho\Xi_6(\varphi))
+\rho\partial_c\Xi_7(h)\Big)dc\\
&=-\int_{u(0)}^{u(1)}\mu(c)\Xi_1(h)
-\partial_c(\rho\mu)\Xi_2(h)
+\mu(c)\Xi_3(h)
-\Xi_4(h)\partial_c\mu\\
&\quad-\partial_c(\rho\mu)\Xi_5(\varphi)
+\rho(c)\partial_c^2(\rho\mu)\Xi_6(\varphi)
-\partial_c(\rho\mu)\Xi_7(h)dc,
\end{align*}
which along with (\ref{eq:The1})-(\ref{eq:The567}) and Proposition \ref{prop:mu} yields
\begin{align*}
\|\rho(u(y))\partial_y^2\mathbf{T}(\rho\mu)(t,y)\|_{L^2}
\leq C\|\widehat{\om}_0(\al,\cdot)\|_{H^2}.
\end{align*}
This implies
\beno
\|\rho(u(\cdot))\partial_y^2W(t)\|_{H^{-1}_xH^2_y}\leq C\|\om_0\|_{H^{-1}_xH^2_y}.
\eeno

The proof of the proposition is completed.
\end{proof}

\section{Proof of Theorem \ref{Thm:main}}

This section is devoted to the proof of Theorem \ref{Thm:main}.

\subsection{Decay estimates}

With the uniform Sobolev estimates of the vorticity, the decay estimates of the velocity are similar to \cite{LZ}.
For the completeness, we present a proof. By the elliptical estimate, we get
\beno
\|V\|_{L^2}\leq C\|\omega\|_{H^{-1}}.
\eeno
We get by duality that
\begin{align}\label{equ:dual}\nonumber
\|V(t)\|_{L^2}\leq&
\sup_{\varphi\in C^{\infty}_0,~\|\varphi\|_{H^1}\leq 1}
\bigg|\int_0^1\int_{-\pi}^{\pi}\varphi\omega(t)dxdy\bigg|\\
\leq &C\sup_{\varphi\in C^{\infty}_0,~\|\varphi\|_{H^1}\leq 1}
\bigg|\sum_{|\al|\neq 0}\int_0^1\overline{\widehat{\varphi}(\al,y)}\widehat{W}(t,\al,y)e^{-i\al u(y)t}dy\bigg|.
\end{align}
Integration by parts gives
\begin{align*}
\int_0^1\overline{\widehat{\varphi}(\al,y)}\widehat{W}(t,\al,y)e^{-i\al u(y)t}dy
=&\frac{e^{-i\al u(y)t}}{-i\al tu'(y)}\overline{\widehat{\varphi}(\al,y)}\widehat{W}(t,\al,y)\bigg|_{y=0}^1\\
&-\int_0^1\frac{e^{-i\al u(y)t}}{-i\al t}\partial_y\Big(\frac{\overline{\widehat{\varphi}(\al,y)}\widehat{W}(t,\al,y)}{u'(y)}\Big)dy\\
=&-\int_0^1\frac{e^{-i\al u(y)t}}{-i\al t}\partial_y\Big(\frac{\overline{\widehat{\varphi}(\al,y)}\widehat{W}(t,\al,y)}{u'(y)}\Big)dy,
\end{align*}
from which and Proposition \ref{prop:vorticity}, we infer that
\begin{align*}
\|V(t)\|_{L^2}\leq& Ct^{-1}\sup_{\varphi\in C^{\infty}_0,~\|\varphi\|_{H^1}\leq 1}
\|W(t)\|_{H_x^{-1}H_y^1}\|\varphi\|_{L^2_xH^1_y}\\
\leq& Ct^{-1}\|\omega_0\|_{H_x^{-1}H_y^1}.
\end{align*}
Also we get from (\ref{equ:dual}) that
\begin{align*}
\|V(t)\|_{L^2}\leq& C\sup_{\varphi\in C^{\infty}_0,~\|\varphi\|_{H^1}\leq 1}\|W(t)\|_{H_x^{-1}L_y^2}\|\varphi\|_{H^1_xL^2_y}\\
\leq& C\|\omega_0\|_{H_x^{-1}L_y^2}.
\end{align*}

Recall that $-\triangle V^2=\partial_x\omega$ and $V^2(x,0)=V^2(1,y)=0$. We define
\beno
-\Delta \vartheta=V^2,\quad \vartheta(t,x,0)=\vartheta(t,x,1)=0.
\eeno
We get by the elliptic estimate that
\beno
\|\vartheta\|_{H^2}\le C\|V^2\|_{L^2}.
\eeno
Integration by parts gives
\begin{align*}
\|V^2\|_{L^2}^2=&\iint\partial_x\omega \vartheta dxdy
=\sum_{|\al|\neq 0}\int_0^1i\al e^{-i\al u(y)t}\widehat{W}(t,\al,y)\overline{\widehat{\vartheta}}(t,\al,y)dy\\
=&\sum_{|\al|\neq 0}\int_0^1\frac{e^{-i\al u(y)t}}{t}\partial_y\Big(\frac{\widehat{W}(t,\al,y)\overline{\widehat{\vartheta}}(t,\al,y)}{u'(y)}\Big)dy\\
=&\sum_{|\al|\neq 0}\frac{e^{-i\al u(y)t}}{i\al t^2 u'}
\partial_y\Big(\frac{\widehat{W}(t,\al,y)\overline{\widehat{\vartheta}}(t,\al,y)}{u'(y)}\Big)\bigg|_{y=0}^1\\
&-\sum_{|\al|\neq 0}\int_0^1\frac{e^{-i\al u(y)t}}{i\al t^2}\partial_y\bigg(\frac{1}{u'}\partial_y\Big(\frac{\widehat{W}(t,\al,y)\overline{\widehat{\vartheta}}(t,\al,y)}{u'(y)}\Big)\bigg)dy.
\end{align*}
By Sobolev embedding and Proposition \ref{prop:vorticity}, the first term on the right hand side is bounded by
\begin{align*}
&Ct^{-2}\sum_{|\al|\neq 0}\frac{1}{\al}\|\widehat{W}(t,\al,\cdot)\|_{H^1_y}\big\|\vartheta(t,\al,\cdot)\big\|_{H^2_y}\le Ct^{-2}\|\om_0\|_{H^{-1}_xH^2_y}\|V^2\|_{L^2}.
\end{align*}
Using Proposition \ref{prop:vorticity} again and Hardy inequality, we get
\begin{align*}
\bigg\|\partial_y\bigg(\frac{1}{u'}\partial_y\Big(\frac{\widehat{W}(t,\al,y)\overline{\widehat{\vartheta}}(t,\al,y)}{u'(y)}\Big)\bigg)\bigg\|_{L^1_y}
\leq& C\|\rho(u(y))\partial_y^2\widehat{W}(t,\al,y)\|_{L^2_y}\Big\|\frac{\vartheta(t,\al,y)}{\rho(u(y))}\Big\|_{L^{2}_y}\\
&+C\|\partial_y\widehat{W}(t,\al,y)\|_{L^2_y}\big\|\vartheta(t,\al,y)\big\|_{H^1_y}\\
&+C\|\widehat{W}(t,\al,y)\|_{L^2_y}\big\|\vartheta(t,\al,y)\big\|_{H^2_y}\\
\leq & C\|\omega_0(\al,\cdot)\|_{H^2_y}\big\|\vartheta(t,\al,y)\big\|_{H^2_y}.
\end{align*}
This implies that the second term is bounded by
\beno
Ct^{-2}\|\om_0\|_{H^{-1}_xH^2_y}\|V^2\|_{L^2}.
\eeno

Thus, we show that
\begin{align*}
\|V^2\|_{L^2} \leq Ct^{-2}\|\omega_0\|_{H^{-1}_xH_y^2}.
\end{align*}

\subsection{Scattering}
We only prove $H^1$ scattering. The proof of $L^2$ scattering is similar.
Suppose that $\om_0\in H^{-1}_xH^1_y$.
Let $\{\om^{(n)}_0\}_{n\geq 1}\subseteq H^{-1}_xH^2_y$ be a sequence
such that $\om^{(n)}_0\longrightarrow \om_0$ in $H^{-1}_xH^1_y$. Let $\om^{(n)}(t,x,y)$ be the solution to linearized Euler equation with initial data $\om^{(n)}_0(x,y)$.
Then $\om^{(n)}\longrightarrow \om$ in $L^{\infty}_t(H^{-1}_xH^1_y)$.
Thanks to $W_t^{(n)}=u''(y)\pa_x\psi^{(n)}$, we get
\beno
W^{(n)}(t,x,y)=\om^{(n)}(0,x,y)-\int_0^tu''(y)\pa_x\psi^{(n)}(s,x+su(y),y)ds.
\eeno
By the decay estimate, we have
\beno
\|\pa_x\psi^{(n)}(s,x,y)\|_{L^2_{x,y}}\leq \frac{C}{\langle t\rangle^2}\|\omega_0^{(n)}\|_{H^{-1}_xH^2_y}.
\eeno
Letting $t\to \infty$, we get
\beno
W^{(n)}(t,x,y)\longrightarrow \om^{(n)}(0,x,y)-\int_0^{\infty}u''(y)\pa_x\psi^{(n)}(s,x+su(y),y)ds=\om_{\infty}^{(n)}\quad \textrm{in }H^{-1}_xL^2_{y}.
\eeno
By Proposition \ref{prop:vorticity}, $\{W^{(n)}\}_{n\geq 1}$ is a Cauchy sequence in $L^{\infty}_t(H^{-1}_xH_y^1)$.
Thus, $\om_{\infty}^{(n)}$ is a Cauchy sequence in $H^{-1}_xH_y^1$, then $\om^{\infty}=\lim_{n\to \infty}\om_{\infty}^{(n)}\in H^{-1}_xH_y^1$.

\section{Appendix}

\subsection{Multilinear singular integral operators}

The multilinear operator $\mathbb{T}$ is defined by
\beno
\bbT(f,g,F,G)(c)\eqdef\textrm{p.v.}\int_{u(0)}^{u(1)}\frac{\int_{c}^{c'}g(z)F(z,c)dz}{(c-c')^2}G(c',c)f(c')dc',
\eeno
where $f,g$ are functions defined on $D_0$ and $F,G$ are functions defined on $D_0\times D_0$.

The multilinear operator $\mathbb{B}_0$ and $\mathbb{B}_1$ are defined by
\beno
&&\mathbb{B}_0(f,g,F,G)(c)\eqdef\frac{(u(1)-c)f(u(0))}{c-u(0)}\int_{u(0)}^cg(z)F(z,c)G(u(0),c)dz,\\
&&\mathbb{B}_1(f,g,F,G)(c)\eqdef\frac{(c-u(0))f(u(1))}{u(1)-c}\int_{c}^{u(1)}g(z)F(z,c)G(u(1),c)dz.
\eeno

\begin{lemma}\label{Lem:iden-T1}
It holds that
\begin{align*}
\partial_c\big(\rho(c)\mathbb{T}(f,g,F,G)(c)\big)
=&\rho(c)\mathbb{T}(f,g',F,G)(c)
+\rho(c)\mathbb{T}(f,g,(\partial_z+\partial_c)F,G)(c)
\\
&+\rho(c)\mathbb{T}(f,g,F,(\partial_z+\partial_c)G)(c)\\
&+\rho(c)\mathbb{T}(f',g,F,G)(c)
+\rho'(c)\mathbb{T}(f,g,F,G)(c)\\
&-\mathbb{B}_0(f,g,F,G)(c)
-\mathbb{B}_1(f,g,F,G)(c).
\end{align*}
\end{lemma}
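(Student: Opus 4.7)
The plan is to recast $\mathbb{T}$ as a Hilbert-type singular integral and then differentiate term by term. The starting point is the factorization $I(c,c') = \int_c^{c'} g(z)F(z,c)\,dz = (c'-c)M(c,c')$, where
$$M(c,c') = \int_0^1 g\big((1-s)c+sc'\big)\,F\big((1-s)c+sc',c\big)\,ds$$
is a smooth function of $(c,c')$. This yields the reformulation
$$\mathbb{T}(f,g,F,G)(c) = \text{p.v.}\!\int_{u(0)}^{u(1)} \frac{M(c,c')\,G(c',c)\,f(c')}{c'-c}\,dc'.$$
Changing variables $\eta = c'-c$ then gives $\mathbb{T}(c) = \text{p.v.}\!\int_{u(0)-c}^{u(1)-c} N(c,\eta)/\eta\,d\eta$ with $N(c,\eta) = M(c,c+\eta)\,G(c+\eta,c)\,f(c+\eta)$ smooth in both variables. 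The point of this reformulation is that the $1/\eta$ singularity at $\eta=0$ is decoupled from the parameter $c$, which now sits entirely in the smooth factor $N$.

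Applying Leibniz's rule (both endpoints have derivative $-1$ in $c$) will then yield
$$\partial_c\mathbb{T}(c) = -\frac{N(c,u(1)-c)}{u(1)-c} + \frac{N(c,u(0)-c)}{u(0)-c} + \text{p.v.}\!\int_{u(0)-c}^{u(1)-c}\frac{\partial_c N(c,\eta)}{\eta}\,d\eta.$$
The step I expect to be the main obstacle is justifying that $\partial_c$ commutes with the principal value. I will handle it by splitting $N(c,\eta)/\eta = [N(c,\eta)-N(c,0)]/\eta + N(c,0)/\eta$: the first summand has a bounded integrand so classical Leibniz applies, while the second equals $N(c,0)\ln\tfrac{u(1)-c}{c-u(0)}$ in the p.v.\ sense, which differentiates explicitly. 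A direct check shows that all the $N(c,0)$ terms produced by the moving endpoints and the $\log$ derivative cancel out, leaving precisely the displayed formula. The product rule then gives $\partial_c(\rho\mathbb{T}) = \rho'\mathbb{T}+\rho\,\partial_c\mathbb{T}$.

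Multiplying by $\rho(c) = (c-u(0))(u(1)-c)$, the two boundary contributions convert to $-\mathbb{B}_1$ and $-\mathbb{B}_0$: using $N(c,u(j)-c) = I(c,u(j))G(u(j),c)f(u(j))/(u(j)-c)$ together with $I(c,u(0)) = -\int_{u(0)}^c gF\,dz$, the identification follows immediately from the definitions of $\mathbb{B}_0,\mathbb{B}_1$. For the interior term, the key observation is that $\partial_c$ at fixed $\eta$ acts on $N(c,\eta)=[MGf](c,c+\eta)$ as the ``good derivative'' $(\partial_c+\partial_{c'})$ evaluated at $c'=c+\eta$. Distributing via the product rule,
$$(\partial_c+\partial_{c'})[MGf] = [(\partial_c+\partial_{c'})M]\,Gf + M[(\partial_c+\partial_{c'})G]\,f + MG\,f',$$
and since the chain rule with $z_s = (1-s)c+sc'$ gives $(\partial_c+\partial_{c'})z_s \equiv 1$, one further obtains
$$(\partial_c+\partial_{c'})M(c,c') = \int_0^1 g'(z_s)F(z_s,c)\,ds + \int_0^1 g(z_s)(\partial_z+\partial_c)F(z_s,c)\,ds.$$
Translating the p.v.\ integral back to the $c'$-variable identifies the four resulting summands with $\mathbb{T}(f,g',F,G)$, $\mathbb{T}(f,g,(\partial_z+\partial_c)F,G)$, $\mathbb{T}(f,g,F,(\partial_z+\partial_c)G)$, and $\mathbb{T}(f',g,F,G)$ respectively, completing the identity.
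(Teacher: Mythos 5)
Your proposal is correct, and it reaches the identity by a genuinely different mechanism than the paper. The paper's proof splits $\rho(c)\mathbb{T}$ using the algebraic identity $\rho(c)-\rho(c')=(c'-c)(c'+c-u(0)-u(1))$, converts $\partial_c$ of the kernel into $-\partial_{c'}$ plus good terms via $(\partial_c+\partial_{c'})\int_c^{c'}gF\,dz=\int_c^{c'}g'F+g(\partial_z+\partial_c)F\,dz$, integrates by parts in $c'$ (the boundary contributions of the $\frac{1}{c'-c}$ piece give $-\mathbb{B}_0-\mathbb{B}_1$, while the $\frac{\rho(c')}{(c'-c)^2}$ piece has none since $\rho$ vanishes at the endpoints), and finishes with a recombination identity. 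You instead factor $\int_c^{c'}gF\,dz=(c'-c)M(c,c')$ so that $\mathbb{T}$ becomes a Hilbert-type integral with smooth symbol, change variables to $\eta=c'-c$, and apply Leibniz directly to the principal value; I checked that your cancellation of the $N(c,0)$ contributions from the moving endpoints against the derivative of $N(c,0)\ln\frac{u(1)-c}{c-u(0)}$ is correct, that $\rho(c)\frac{N(c,u(j)-c)}{u(j)-c}$ does identify with $-\mathbb{B}_1,-\mathbb{B}_0$, and that $\partial_c$ at frozen $\eta$ acting on $M\,G\,f$ produces exactly the four interior $\mathbb{T}$-terms. Your version makes it transparent \emph{why} the good derivative $(\partial_z+\partial_c)$ appears (it is just $\partial_c$ along the diagonal direction $c'=c+\eta$) and avoids the paper's final recombination step; the small price is that dominating $\partial_c\big([N(c,\eta)-N(c,0)]/\eta\big)$ requires $\partial_cN$ to be Lipschitz (or Dini) in $\eta$ at $\eta=0$, i.e.\ one more degree of smoothness on $f,g,F,G$ than the statement nominally displays --- this is available in all the applications (where $F=\phi_1(u^{-1},c)$, $G=\phi_1(u^{-1},c)^{-2}$ are $C^2$ by Proposition \ref{prop:phi}), and the paper's own p.v.\ integration by parts is no more rigorous on this point, so I do not regard it as a gap.
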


\begin{proof}
Thanks to
\beno
\rho(c)-\rho(c')=(c'-c)(c'+c-u(0)-u(1)),
\eeno
we get
\begin{align*}
&\rho(c)\mathbb{T}(f,g,F,G)(c)\\
&=\textrm{p.v.}\int_{u(0)}^{u(1)}\frac{(c'+c-u(0)-u(1))\int_{c}^{c'}g(z)F(z,c)dz}{c'-c}G(c',c)f(c')dc'\\
&\quad+\textrm{p.v.}\int_{u(0)}^{u(1)}\frac{\int_{c}^{c'}g(z)F(z,c)dz}{(c'-c)^2}G(c',c)f(c')\rho(c')dc'.
\end{align*}
A direct calculation shows
\begin{align*}
&\partial_c\bigg(\frac{\int_{c}^{c'}g(z)F(z,c)dz}
{c'-c}\bigg)\\
&=-\partial_{c'}\bigg(\frac{\int_{c}^{c'}g(z)F(z,c)dz}{c'-c}\bigg)
+\frac{\int_{c}^{c'}g'(z)F(z,c)dz}{c'-c}
+\frac{\int_{c}^{c'}g(z)(\partial_c+\partial_z)F(z,c)dz}{c'-c},
\end{align*}
and
\begin{align*}
&\partial_c\bigg(\frac{\int_{c}^{c'}g(z)F(z,c)dz}
{(z-c)^2}\bigg)\\
&=-\partial_{c'}\bigg(\frac{\int_{c}^{c'}g(z)F(z,c)dz}{(c'-c)^2}\bigg)
+\frac{\int_{c}^{c'}g'(z)F(z,c)dz}{(c'-c)^2}
+\frac{\int_{c}^{c'}g(z)(\partial_c+\partial_z)F(z,c)dy}{(c'-c)^2}.
\end{align*}
Thus, we obtain
\begin{align*}
&\partial_c\big(\rho(c)\mathbb{T}(f,g,F,G)(c)\big)\\
=&\textrm{p.v.}\int_{u(0)}^{u(1)}\frac{\int_{c}^{c'}g(z)F(z,c)dz}{(c'-c)}(\partial_c+\partial_{c'})\Big(G(c',c)f(c')(c'+c-u(0)-u(1))\Big)dc'\\
&+\textrm{p.v.}\int_{u(0)}^{u(1)}\frac{\int_{c}^{c'}g(z)F(z,c)dz}{(c'-c)^2}(\partial_c+\partial_{c'})\Big(G(c',c)f(c')\rho(c')\Big)dc'\\
&+\textrm{p.v.}\int_{u(0)}^{u(1)}\frac{\int_{c}^{c'}g(z)(\partial_c+\partial_z)F(z,c)dz}{(c'-c)}G(c',c)f(c')(c'+c-u(0)-u(1))dc'\\
&+\textrm{p.v.}\int_{u(0)}^{u(1)}\frac{\int_{c}^{c'}g(z)(\partial_c+\partial_z)F(z,c)dz}{(c'-c)^2}G(c',c)f(c')\rho(c')dc'\\
&+\textrm{p.v.}\int_{u(0)}^{u(1)}\frac{\int_{c}^{c'}g'(z)F(z,c)dz}{(c'-c)}G(c',c)f(c')(c'+c-u(0)-u(1))dc'\\
&+\textrm{p.v.}\int_{u(0)}^{u(1)}\frac{\int_{c}^{c'}g'(z)F(z,c)dz}{(c'-c)^2}G(c',c)f(c')\rho(c')dc'\\
&-\frac{(c-u(0))f(u(1))}{u(1)-c}\int_{c}^{u(1)}g(z)F(z,c)G(u(1),c)dz\\
&+\frac{(c-u(1))f(u(0))}{c-u(0)}\int_{u(0)}^cg(z)F(z,c)G(u(0),c)dz.
\end{align*}
Then the lemma follows by using
\beno
&&(c'-c)(\partial_c+\partial_{c'})\big((c'+c-u(0)-u(1))f(c')\big)
+\partial_{c'}\big(f(c')\rho(c')\big)\\
&&=\rho'(c)f(c')+\rho(c)f'(c'),
\eeno
and $\rho(c)-\rho(c')=(c'-c)(c'+c-u(0)-u(1))$.
\end{proof}

By Lemma \ref{Lem:iden-T1} and $(\partial_z+\partial_c)e^{it\al(z-c)}=0$, we can deduce that

\begin{lemma}\label{lem:iden-T2}
It holds that
\begin{align*}
\rho(c)\mathbb{T}(f, g',e^{it\al(z-c)}F,G)(c)
=&\rho(c)\partial_c\big(e^{-it\al c}\mathbb{T}(f, ge^{it\al z},F,G)\big)(c)\\
&-\rho(c)e^{-it\al c}\mathbb{T}(f, ge^{it\al z},(\partial_z+\partial_c)F,G)(c)\\
&-\rho(c)e^{-it\al c}\mathbb{T}(f, g e^{it\al z},F,(\partial_z+\partial_c)G)(c)\\\nonumber
&-\rho(c)e^{-it\al c}\mathbb{T}(f', ge^{it\al z},F,G)(c)\\
&+e^{-it\al c}\mathbb{B}_0(f, ge^{it\al z},F,G)(c)
+e^{-it\al c}\mathbb{B}_1(f, ge^{it\al z},F,G)(c).
\end{align*}
\end{lemma}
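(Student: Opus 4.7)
\medskip
\noindent\textbf{Proof proposal.} The plan is to derive the identity directly from Lemma \ref{Lem:iden-T1} by a judicious substitution. Concretely, apply Lemma \ref{Lem:iden-T1} with the second argument $g$ replaced by the product $ge^{it\al z}$ (keeping $f,F,G$ fixed), noting that this substitution is permissible because Lemma \ref{Lem:iden-T1} holds for arbitrary Lipschitz functions in the second slot. By the Leibniz rule, $(ge^{it\al z})'=g'e^{it\al z}+it\al\, ge^{it\al z}$, so the term $\rho(c)\mathbb{T}(f,(ge^{it\al z})',F,G)$ on the right-hand side splits into $\rho(c)\mathbb{T}(f,g'e^{it\al z},F,G)+it\al\rho(c)\mathbb{T}(f,ge^{it\al z},F,G)$.

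Next I would multiply both sides of the resulting identity by $e^{-it\al c}$ and use the trivial but crucial observation that $e^{-it\al c}$ is independent of the integration variables $z$ and $c'$, so it can be pulled inside the inner and outer integrals of $\mathbb{T}$ (and of $\mathbb{B}_0,\mathbb{B}_1$). Combined with $e^{-it\al c}e^{it\al z}=e^{it\al(z-c)}$, this converts $e^{-it\al c}\mathbb{T}(f,g'e^{it\al z},F,G)$ into exactly $\mathbb{T}(f,g',e^{it\al(z-c)}F,G)$, which is the left-hand side of the desired lemma.

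It then remains to recognize $e^{-it\al c}\partial_c\bigl(\rho(c)\mathbb{T}(f,ge^{it\al z},F,G)\bigr)$ as
\[
\rho(c)\partial_c\bigl(e^{-it\al c}\mathbb{T}(f,ge^{it\al z},F,G)\bigr)+it\al\rho(c)e^{-it\al c}\mathbb{T}(f,ge^{it\al z},F,G)+e^{-it\al c}\rho'(c)\mathbb{T}(f,ge^{it\al z},F,G),
\]
by the product rule in reverse. The $it\al\rho\, e^{-it\al c}\mathbb{T}_0$ contribution appearing here is precisely cancelled by the $it\al$-term produced above from $(ge^{it\al z})'$, and the $e^{-it\al c}\rho'\mathbb{T}_0$ contribution cancels the analogous term $\rho'(c)\mathbb{T}(f,ge^{it\al z},F,G)$ on the right-hand side of Lemma \ref{Lem:iden-T1}. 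This pair of cancellations is the algebraic shadow of the identity $(\partial_z+\partial_c)e^{it\al(z-c)}=0$ mentioned in the hint: the two phases $e^{it\al z}$ and $e^{-it\al c}$ differentiate to opposite values.

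The remaining terms on the right-hand side of Lemma \ref{Lem:iden-T1} (those involving $(\partial_z+\partial_c)F$, $(\partial_z+\partial_c)G$, $f'$, and the two boundary pieces $\mathbb{B}_0,\mathbb{B}_1$) are all scalar-multiplied by $e^{-it\al c}$ and transcribed unchanged, giving the six terms on the right of the statement. There is no real analytic obstacle; the only care needed is the bookkeeping of the two cancellations described above and verifying that the $e^{-it\al c}$ factor commutes with all integrals. Once this substitution-and-rearrangement is carried out, the identity is established.
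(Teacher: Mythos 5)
Your proposal is correct and is exactly the deduction the paper intends: the paper gives no written proof beyond the remark that the identity follows from Lemma \ref{Lem:iden-T1} together with $(\partial_z+\partial_c)e^{i t\al(z-c)}=0$, and your substitution $g\mapsto ge^{it\al z}$ followed by multiplication by $e^{-it\al c}$ and the two cancellations (of the $it\al$-terms and of the $\rho'$-terms) carries out that deduction faithfully.
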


\begin{lemma}\label{lem:iden-B01}
It holds that
\begin{align*}
\partial_c\mathbb{B}_0(f,g,F,G)(c)
=&\mathbb{B}_0(f,g,F,\partial_cG)(c)
-\frac{f(u(0))G(u(0),c)}{c-u(0)}\int_{u(0)}^cg(z)F(z,c)dz\\
&+\mathbb{B}_0(f,g',F,G)(c)
+\mathbb{B}_0(f,g, (\partial_z+\partial_c)F,G)(c),\\
\partial_c\mathbb{B}_1(f,g,F,G)(c)
=&\mathbb{B}_1(f,g,F,\partial_cG)(c)
-\frac{f(u(1))G(u(0),c)}{u(1)-c}\int_c^{u(1)}g(z)F(z,c)dz\\
&+\mathbb{B}_1(f,g',F,G)(c)
+\mathbb{B}_1(f,g,(\partial_z+\partial_c)F,G)(c).
\end{align*}
\end{lemma}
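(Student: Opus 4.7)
The plan is to verify both identities by direct differentiation of their defining integrals. The two formulas are mirror images of one another under $u(0) \leftrightarrow u(1)$ together with reversal of the orientation of the $z$-integral, so it suffices to treat the first assertion; the second follows by the same bookkeeping.

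Since $G(u(0),c)$ is independent of $z$, I would pull it outside the integral and write
\[
\mathbb{B}_0(f,g,F,G)(c) = p(c)\, G(u(0),c)\, J(c), \quad p(c) := \tfrac{(u(1)-c) f(u(0))}{c-u(0)}, \quad J(c) := \int_{u(0)}^c g(z) F(z,c)\, dz.
\]
The product rule together with the Leibniz rule for the variable upper limit produces four contributions: $p'(c) G(u(0),c) J(c)$ from differentiating the prefactor, $p(c)\,\partial_c G(u(0),c)\, J(c)$ (which is already the advertised $\mathbb{B}_0(f,g,F,\partial_c G)$ term), the boundary piece $p(c) G(u(0),c) g(c) F(c,c)$ from the upper limit of integration, and $p(c) G(u(0),c) \int_{u(0)}^c g(z)\,\partial_c F(z,c)\,dz$ from differentiation under the integral sign.

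The crucial rearrangement is to split $\partial_c F = (\partial_z+\partial_c)F - \partial_z F$ inside the last integral; the first half of this decomposition produces exactly the desired $\mathbb{B}_0(f,g,(\partial_z+\partial_c)F,G)$ term, while the remaining $-\int g\,\partial_z F\,dz$ combines with the $\mathbb{B}_0(f,g',F,G)$ term from the right-hand side to give $-\int_{u(0)}^c \partial_z(g F)\,dz = -g(c)F(c,c)+g(u(0))F(u(0),c)$, using integration by parts. The $g(c) F(c,c)$ boundary piece cancels exactly against the Leibniz contribution from the moving upper endpoint. The residual endpoint term at $z=u(0)$, together with the prefactor-derivative term $p'(c) G(u(0),c) J(c)$, is then identified with the single compact boundary term $-\frac{f(u(0))G(u(0),c)}{c-u(0)} J(c)$ claimed in the lemma.

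The same scheme applies to $\partial_c \mathbb{B}_1$: now the integration variable runs from $c$ up to $u(1)$, so the sign of the Leibniz boundary contribution flips and its supporting evaluation sits at $z=u(1)$; the prefactor $(c-u(0))f(u(1))/(u(1)-c)$ is the mirror of $p(c)$, and the same assembly produces the claimed identity. The only real obstacle is the arithmetic reconciliation of $p'(c) J(c)$ with the integration-by-parts endpoint contribution so that it collapses precisely into the stated boundary expression; every other step is a mechanical application of the product rule, the Leibniz rule, or a single integration by parts.
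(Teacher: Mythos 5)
Your strategy --- write $\mathbb{B}_0=p(c)\,G(u(0),c)\,J(c)$ with $p(c)=\frac{(u(1)-c)f(u(0))}{c-u(0)}$ and $J(c)=\int_{u(0)}^c gF\,dz$, differentiate by the product and Leibniz rules, split $\partial_cF=(\partial_z+\partial_c)F-\partial_zF$, and integrate by parts --- is exactly the ``direct calculation'' the paper appeals to, and every step you actually carry out is correct. The gap is the one step you defer as ``the only real obstacle'': the reconciliation of $p'(c)G(u(0),c)J(c)$ and the endpoint contribution with the claimed compact boundary term. That reconciliation fails. Carrying your own scheme to the end gives
\begin{align*}
\partial_c\mathbb{B}_0(f,g,F,G)=\ &\mathbb{B}_0(f,g,F,\partial_cG)+\mathbb{B}_0(f,g',F,G)+\mathbb{B}_0(f,g,(\partial_z+\partial_c)F,G)\\
&+\frac{f(u(0))\big(u(0)-u(1)\big)}{(c-u(0))^2}G(u(0),c)J(c)+\frac{(u(1)-c)f(u(0))}{c-u(0)}G(u(0),c)\,g(u(0))F(u(0),c),
\end{align*}
and since $\frac{u(0)-u(1)}{(c-u(0))^2}=-\frac{1}{c-u(0)}-\frac{u(1)-c}{(c-u(0))^2}$, the second line equals the advertised term $-\frac{f(u(0))G(u(0),c)}{c-u(0)}J(c)$ only modulo the two leftovers $-\frac{1}{c-u(0)}\mathbb{B}_0(f,g,F,G)(c)$ and $+\frac{(u(1)-c)f(u(0))}{c-u(0)}G(u(0),c)g(u(0))F(u(0),c)$. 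Neither can be absorbed for general $g$: the first is a multiple of $J$ with the wrong coefficient ($p'(c)=-\frac{f(u(0))(u(1)-u(0))}{(c-u(0))^2}$, not $-\frac{f(u(0))}{c-u(0)}$), and the second is a point evaluation of $gF$ at $u(0)$, which is not a multiple of the integral $J$.

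Indeed the identity as printed is false: with $u(0)=0$, $u(1)=1$, $f\equiv F\equiv G\equiv1$, $g(z)=z$, the left side is $\partial_c\big(\tfrac{c(1-c)}{2}\big)=\tfrac12-c$ while the stated right side is $-\tfrac c2+(1-c)=1-\tfrac{3c}{2}$; the difference is precisely $\frac{1}{c-u(0)}\mathbb{B}_0(f,g,F,G)(c)$. (The $\mathbb{B}_1$ identity has the analogous defect, and its boundary term should in any case carry $G(u(1),c)$ rather than $G(u(0),c)$.) So your plan is the right plan, but the step you wave through is a genuine obstruction rather than bookkeeping: a correct proof must either add the missing terms to the statement --- they are harmless in the paper's applications, where $g$ vanishes at the endpoints so the point-evaluation term drops and the extra multiple of $\mathbb{B}_0$ satisfies the same bounds --- or verify the coefficient arithmetic explicitly, at which point the discrepancy becomes visible. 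The paper itself gives no proof beyond ``a direct calculation,'' so it does not resolve this either.
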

\begin{proof}
The lemma follows by a direct calculation.
\end{proof}

Next let us introduce some linear operators. Let $\phi(y,c)$ be the solution of (\ref{eq:Rayleigh-H}) given by Proposition \ref{prop:Rayleigh-Hom}
and $\phi_1(y,c)=\frac {\phi(y,c)} {u(y)-c}$. We define
\begin{align*}
\mathcal{L}_1(g)(c)\eqdef&\mathbb{T}\Big((u^{-1})', g, \phi_1(u^{-1},c),\,\frac{1}{\phi_1(u^{-1},c)^2}\Big)(c),\\
\mathcal{L}_2(g)(c)\eqdef&\mathbb{T}\Big((u^{-1})', g,\,\,(\partial_z+\partial_c)\phi_1(u^{-1},c),\,\frac{1}{\phi_1(u^{-1},c)^2}\Big)(c)\\
&+\mathbb{T}\Big((u^{-1})', g,\phi_1(u^{-1},c),\,(\partial_z+\partial_c)\big(\frac{1}{\phi_1(u^{-1},c)^2}\big)\Big)(c)\\
&+\mathbb{T}\Big((u^{-1})'', g,\phi_1(u^{-1},c),\,\frac{1}{\phi_1(u^{-1},c)^2}\Big)(c),\\
\mathcal{L}_3(g)(c)\eqdef&
\mathbb{T}\Big((u^{-1})', g,(\partial_z+\partial_c)^2\phi_1(u^{-1},c),\frac{1}{\phi_1(u^{-1},c)^2}\Big)(c)\\
&+\mathbb{T}\Big((u^{-1})', g,\phi_1(u^{-1},c),(\partial_z+\partial_c)^2\big(\frac{1}{\phi_1(u^{-1},c)^2}\big)\Big)(c)\\
&+\mathbb{T}\Big((u^{-1})''', g,\phi_1(u^{-1},c),\frac{1}{\phi_1(u^{-1},c)^2}\Big)(c)\\
&+2\mathbb{T}\Big((u^{-1})'', g, (\partial_z+\partial_c)\phi_1(u^{-1},c),\frac{1}{\phi_1(u^{-1},c)^2}\Big)(c)\\
&+2\mathbb{T}\Big((u^{-1})', g, (\partial_z+\partial_c)\phi_1(u^{-1},c),(\partial_z+\partial_c)\big(\frac{1}{\phi_1(u^{-1},c)^2}\big)\Big)(c)\\
&+2\mathbb{T}\Big((u^{-1})'', g, \phi_1(u^{-1},c),(\partial_z+\partial_c)\big(\frac{1}{\phi_1(u^{-1},c)^2}\big)\Big)(c).
\end{align*}
We also introduce the linear operators
\begin{align*}
\mathcal{B}(g)(c)\eqdef&\mathbb{B}_0\Big((u^{-1})', g,\phi_1(u^{-1},c),\,\frac{1}{\phi_1(u^{-1},c)^2}\Big)(c)\\
&+\mathbb{B}_1\Big((u^{-1})', g,\phi_1(u^{-1},c),\,\frac{1}{\phi_1(u^{-1},c)^2}\Big)(c),\\
\mathcal{B}_1(g)(c)\eqdef&\mathbb{B}_0\Big((u^{-1})', g,(\partial_z+\partial_c)\phi_1(u^{-1},c),\,\frac{1}{\phi_1(u^{-1},c)^2}\Big)(c)\\
&+\mathbb{B}_1\Big((u^{-1})', g,(\partial_z+\partial_c)\phi_1(u^{-1},c),\,\frac{1}{\phi_1(u^{-1},c)^2}\Big)(c)\\
&+\mathbb{B}_0\Big((u^{-1})', g,\phi_1(u^{-1},c),\,(\partial_z+\partial_c)\big(\frac{1}{\phi_1(u^{-1},c)^2}\big)\Big)(c)\\
&+\mathbb{B}_1\Big((u^{-1})', g,\phi_1(u^{-1},c),\,(\partial_z+\partial_c)\big(\frac{1}{\phi_1(u^{-1},c)^2}\big)\Big)(c)\\
&+\mathbb{B}_0\Big((u^{-1})'', g,\phi_1(u^{-1},c),\,\frac{1}{\phi_1(u^{-1},c)^2}\Big)(c)\\
&+\mathbb{B}_1\Big((u^{-1})'', g,\phi_1(u^{-1},c),\,\frac{1}{\phi_1(u^{-1},c)^2}\Big)(c).
\end{align*}

\begin{lemma}\label{lem:iden-L}
It holds that
\begin{align*}
&\partial_c\big(\rho\mathcal{L}_1(g)\big)=\rho(c)\mathcal{L}_1(g')
+\rho'(c)\mathcal{L}_1(g)+\rho(c)\mathcal{L}_2(g)+\mathcal{B}(g),\\
&\rho\partial_c\mathcal{L}_2(g)=\rho\mathcal{L}_2(g')+\rho\mathcal{L}_3(g)-\mathcal{B}_1(g),
\end{align*}
and
\begin{align*}
\partial_c^2(\rho^2\mathcal{L}_1(g))=&2\rho''\rho\mathcal{L}_1(g)
+4\rho'\rho\mathcal{L}_1(g')
+2\rho'\rho'\mathcal{L}_1(g)
+4\rho'\rho\mathcal{L}_2(g)
-2\rho'\mathcal{B}(g)\\\nonumber
&+\rho^2\mathcal{L}_1(g'')
+2\rho^2\mathcal{L}_2(g')
-\rho\mathcal{B}(g')
+\rho^2\mathcal{L}_3(g)
-\rho\mathcal{B}_1(g)
-\partial_c\big(\rho\mathcal{B}(g)\big).
\end{align*}

\begin{proof}
The lemma can be proved by using Lemma \ref{Lem:iden-T1}. We omit the details.
\end{proof}

\end{lemma}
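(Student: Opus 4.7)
The entire lemma is an exercise in repeated application of Lemma~\ref{Lem:iden-T1}. Rearranging that identity (moving the $\rho'\mathbb{T}$ piece and relabeling) produces the $\rho\partial_c$-form
\begin{align*}
\rho\partial_c\mathbb{T}(f,g,F,G) = {}&\rho\mathbb{T}(f,g',F,G) + \rho\mathbb{T}(f,g,(\partial_z+\partial_c)F,G) \\
&{}+ \rho\mathbb{T}(f,g,F,(\partial_z+\partial_c)G) + \rho\mathbb{T}(f',g,F,G) \\
&{}- \mathbb{B}_0(f,g,F,G) - \mathbb{B}_1(f,g,F,G),
\end{align*}
which is the workhorse for all three assertions.

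For identity~(1), specialize $(f,F,G) = ((u^{-1})',\phi_1(u^{-1},c),\phi_1(u^{-1},c)^{-2})$. The three chain-rule terms, involving $(\partial_z+\partial_c)F$, $(\partial_z+\partial_c)G$, and $f' = (u^{-1})''$, match precisely the three summands in the definition of $\mathcal{L}_2(g)$, while the boundary pieces combine into $\mathcal{B}(g)$ (with signs fixed by direct computation, consistent with identity~(3) below). For identity~(2), apply the same workhorse formula to each of the three $\mathbb{T}$-summands making up $\mathcal{L}_2(g)$. An additional $(\partial_z+\partial_c)$-derivative on a factor already carrying one such derivative generates $(\partial_z+\partial_c)^2\phi_1(u^{-1},c)$ or $(\partial_z+\partial_c)^2(\phi_1^{-2})(u^{-1},c)$; a second $f'$-operation produces $(u^{-1})'''$; mixed pairings like $\mathbb{T}(f,g,(\partial_z+\partial_c)F,(\partial_z+\partial_c)G)$ arise from two distinct summands, yielding the factor of $2$. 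Collecting, the $g'$-terms form $\rho\mathcal{L}_2(g')$, the six other $\mathbb{T}$-pieces match the definition of $\rho\mathcal{L}_3(g)$ with the correct multiplicities, and the boundary contributions assemble into $\mathcal{B}_1(g)$.

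For identity~(3), differentiate $\rho^2\mathcal{L}_1(g)$ twice by Leibniz and substitute identities~(1) and~(2) at every appearance of $\rho\partial_c\mathcal{L}_1$ or $\rho\partial_c\mathcal{L}_2$. Concretely, identity~(1) gives
\[
\partial_c(\rho^2\mathcal{L}_1(g)) = 2\rho\rho'\mathcal{L}_1(g) + \rho^2\mathcal{L}_1(g') + \rho^2\mathcal{L}_2(g) - \rho\mathcal{B}(g),
\]
and a second differentiation produces, after invoking identity~(1) on both $\mathcal{L}_1(g)$ and $\mathcal{L}_1(g')$ and identity~(2) on $\mathcal{L}_2(g)$, exactly the stated right-hand side; the residual term $\partial_c(\rho\mathcal{B}(g))$ has no simpler form and is left as written. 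The main obstacle is purely combinatorial: the coefficient $2\rho^2$ on $\mathcal{L}_2(g')$, for instance, receives contributions from both $\rho^2\partial_c\mathcal{L}_1(g')$ (via identity~(1)) and $\rho^2\partial_c\mathcal{L}_2(g)$ (via identity~(2)), and similar coincidences occur for every term. No new analytic estimates are required beyond what Lemma~\ref{Lem:iden-T1} supplies.
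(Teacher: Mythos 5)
Your proposal is correct and is exactly the argument the paper intends: the paper's own proof consists of the single line ``use Lemma~\ref{Lem:iden-T1}'', and your repeated application of that identity to the $\mathbb{T}$-summands defining $\mathcal{L}_1,\mathcal{L}_2$, with the resulting $\mathbb{T}$- and $\mathbb{B}$-terms matching the definitions of $\mathcal{L}_2,\mathcal{L}_3,\mathcal{B},\mathcal{B}_1$, is the omitted computation. Your observation about signs is also on target: Lemma~\ref{Lem:iden-T1} actually yields $-\mathcal{B}(g)$ in the first identity (the stated $+\mathcal{B}(g)$ is a typo in the lemma, as confirmed by the $-\rho\mathcal{B}(g')$, $-2\rho'\mathcal{B}(g)$ and $-\partial_c(\rho\mathcal{B}(g))$ terms in the third identity), and your intermediate formula for $\partial_c(\rho^2\mathcal{L}_1(g))$ uses the correct sign.
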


\subsection{Boundedness of singular integral operators}
Let $H$ be the Hilbert operator which is defined by
\beno
Hf(c)=\textrm{p.v.}\int\frac {f(c')} {c-c'}dc'.
\eeno
The maximal Hilbert transform $H^*$ is defined by
\beno
H^{\ast}f(c)=\sup_{\epsilon>0}|H^{\epsilon}f(c)|,\quad H^{\epsilon}f(c)=\int_{|c-c'|\geq \epsilon}\frac{f(c')}{c-c'}dc'.
\eeno
We denote
\beno
\mathrm{H}f(c)\triangleq\chi_{D_0}(c)H(f\chi_{D_0})(c).
\eeno

\begin{lemma}\label{lem:H-H2}
Suppose that $f\in H^2(D_0)$, then it holds that for any $p\in (1,\infty)$,
\begin{align*}
&\|\mathrm{H}f\|_{L^{p}}\leq C\|f\|_{L^p},\\
&\|\rho\mathrm{H}f\|_{W^{1,p}}\leq C\|f\|_{W^{1,p}},\\
&\|\rho^2\mathrm{H}f\|_{W^{2,p}}\leq C\|f\|_{W^{2,p}}.
\end{align*}
\end{lemma}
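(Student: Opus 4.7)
The first inequality is immediate from the classical M.~Riesz theorem: extending $f$ by zero outside $D_0$, the full Hilbert transform $H$ is bounded on $L^p(\mathbb{R})$ for $1<p<\infty$, and restricting to $D_0$ gives $\|\mathrm{H}f\|_{L^p}\leq C\|f\|_{L^p}$.

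For the weighted $W^{1,p}$ bound, the central step is an integration-by-parts identity. Writing $\tfrac{1}{c-c'}=-\partial_{c'}\log|c-c'|$ in the definition of $\mathrm{H}f$ and integrating by parts on $[u(0),u(1)]$, one checks that for $c\in(u(0),u(1))$,
\[
\partial_c \mathrm{H}f(c)\;=\;\mathrm{H}(f')(c)\;+\;\frac{f(u(0))}{c-u(0)}\;-\;\frac{f(u(1))}{c-u(1)}.
\]
The two boundary terms are precisely the singularities that prevent $\mathrm{H}$ from being bounded on $W^{1,p}(D_0)$ without a weight. Multiplying through by $\rho(c)=(c-u(0))(u(1)-c)$, these singular terms become $(u(1)-c)f(u(0))-(c-u(0))f(u(1))$, which are bounded on $D_0$ and controlled by $\|f\|_{L^\infty}\leq C\|f\|_{W^{1,p}}$ via the 1D Sobolev embedding $W^{1,p}(D_0)\hookrightarrow C(\overline{D_0})$. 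Combined with the already-established $L^p$-bound applied to $f'$, this yields $\|\rho\,\partial_c\mathrm{H}f\|_{L^p}\leq C\|f\|_{W^{1,p}}$. Writing $\partial_c(\rho\mathrm{H}f)=\rho'\mathrm{H}f+\rho\,\partial_c\mathrm{H}f$ and using $\rho,\rho'\in L^\infty(D_0)$ together with the $L^p$-bound finishes the $W^{1,p}$ estimate.

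For the $W^{2,p}$ bound, I would differentiate the identity above once more:
\[
\partial_c^2\mathrm{H}f(c)=\mathrm{H}(f'')(c)+\frac{f'(u(0))}{c-u(0)}-\frac{f'(u(1))}{c-u(1)}-\frac{f(u(0))}{(c-u(0))^2}+\frac{f(u(1))}{(c-u(1))^2}.
\]
The crucial algebraic fact is that the weight $\rho^2$ kills every singular term: $\rho^2/(c-u(0))^2=(u(1)-c)^2$, $\rho^2/(c-u(0))=(c-u(0))(u(1)-c)^2$, and symmetrically at $c=u(1)$. Hence $\rho^2\,\partial_c^2\mathrm{H}f$ equals $\rho^2\mathrm{H}(f'')$ plus explicit polynomial multiples of the boundary values $f(u(0)),f(u(1)),f'(u(0)),f'(u(1))$, all controlled by $\|f\|_{W^{2,p}}$ via $W^{2,p}(D_0)\hookrightarrow C^1(\overline{D_0})$. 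The $L^p$-boundedness of $\mathrm{H}$ then handles $\rho^2\mathrm{H}(f'')$. Expanding $\partial_c^2(\rho^2\mathrm{H}f)$ by Leibniz and plugging in the previously established $L^p$-bounds for $\mathrm{H}f$ and $\rho\,\partial_c\mathrm{H}f$ against the bounded coefficients $(\rho^2)'=2\rho\rho'$ and $(\rho^2)''$ completes the estimate.

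The only real technical point is recognizing that the integration-by-parts produces exactly the boundary-singular kernels $(c-u(j))^{-k}$ that are cancelled exactly by $\rho^k$; no deep obstacle arises, and each step reduces to the $L^p$-boundedness of $H$ plus trivial pointwise estimates.
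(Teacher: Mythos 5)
Your argument is correct, and it reaches the same cancellation as the paper by a different decomposition. The paper writes $\rho^k\mathrm{H}f=\mathrm{H}(\rho^k f)+[\rho^k,\mathrm{H}]f$, observes that the commutator has the polynomial kernel $\frac{\rho^k(c)-\rho^k(c')}{c-c'}$ (hence is harmless to differentiate), and then differentiates $\mathrm{H}(\rho^k f)$ by the same change of variables you use --- the point being that the boundary terms never appear because $\rho^k$ vanishes to order $k$ at the endpoints, so $\partial_c^k\mathrm{H}(\rho^k f)=\mathrm{H}((\rho^k f)^{(k)})$ cleanly. You instead differentiate $\mathrm{H}f$ directly, exhibit the boundary-singular kernels $f^{(j)}(u(0))(c-u(0))^{-k}$ explicitly, and cancel them by the external weight $\rho^k$. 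The two routes are morally the same integration by parts organized differently: the commutator version keeps everything inside $L^p$ estimates of the Hilbert transform and needs no pointwise traces, while your version requires the boundary values $f(u(0)),f(u(1)),f'(u(0)),f'(u(1))$ and hence the embeddings $W^{1,p}(D_0)\hookrightarrow C(\overline{D_0})$ and $W^{2,p}(D_0)\hookrightarrow C^1(\overline{D_0})$ --- trivially available in one dimension --- but has the virtue of making completely transparent why the weights $\rho$ and $\rho^2$ are exactly the right ones. Your Leibniz bookkeeping for $\partial_c(\rho\mathrm{H}f)$ and $\partial_c^2(\rho^2\mathrm{H}f)$, reusing the lower-order bounds against the bounded coefficients $\rho'$, $(\rho^2)'$, $(\rho^2)''$, closes the argument; no gap.
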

\begin{proof}
Thanks to
\begin{align*}
[\rho,\mathrm{H}]f(c)
=&\int_{u(0)}^{u(1)}\frac{\rho(c)-\rho(c')}{c-c'}f(c')dc'\\
=&\int_{u(0)}^{u(1)}(u(0)+u(1)-c-c')f(c')dc',
\end{align*}
we deduce that
\begin{align}\label{eq:app-com}
\|\partial_c[\rho,\mathrm{H}]f\|_{L^p}
+\|\partial_c^2[\rho^2,\mathrm{H}]f\|_{L^p}\leq C\|f\|_{L^{p}}.
\end{align}
We have
\begin{align*}
\partial_c\Big(\textrm{p.v.}\int_{u(0)}^{u(1)}\frac{\rho(c')f(c')}{c-c'}dc'\Big)
=&\partial_c\Big(\textrm{p.v.}\int_{c-u(1)}^{c-u(0)}\frac{\rho(c-z)f(c-z)}{z}dz\Big)\\
=&\textrm{p.v.}\int_{c-u(1)}^{c-u(0)}\frac{(\rho f)'(c-z)}{z}dz
=\textrm{p.v.}\int_{u(0)}^{u(1)}\frac{(\rho f)'(c')}{c-c'}dc',
\end{align*}
%\begin{align*}
%\partial_c\Big(\textrm{p.v.}\int_{u(0)}^{u(1)}\frac{\rho(c')f(c')}{c-c'}dc'\Big)
%=&-\int_{u(0)}^{u(1)}\partial_{c'}\Big(\frac{1}{c-c'}\Big)\rho(c')f(c')dc'\\
%=&\textrm{p.v.}\int_{u(0)}^{u(1)}\frac{(\rho f)'(c')}{c-c'}dc',
%\end{align*}
from which and $L^p$ boundedness of Hilbert transform, we infer that
\begin{align*}
&\Big\|\partial_c\Big(\textrm{p.v.}\int_{u(0)}^{u(1)}\frac{\rho(c')f(c')}{c-c'}dc'\Big)\Big\|_{L^p}\leq C\|(\rho f)'\|_{L^p},\\
&\Big\|\partial_c^2\Big(\textrm{p.v.}\int_{u(0)}^{u(1)}\frac{\rho(c')^2f(c')}{c-c'}dc'\Big)\Big\|_{L^p}\leq C_0\|(\rho^2 f)''\|_{L^p},
\end{align*}
which along with (\ref{eq:app-com}) gives the lemma.
\end{proof}

\begin{lemma}\label{lem:sing2}
There exists a constant $C$ independent of $\epsilon\in (0,(u(1)-u(0))/10)$ such that for any $p\in (1,\infty)$
\beno
&&\bigg\|\sup_{\epsilon}\bigg|\int_{\{|c-c'|>\epsilon\}\cap[u(0),u(1)]}\frac{\int_{c}^{c'}f(z)dz}{(c-c')^2}dc'\bigg|
\bigg\|_{L^p}\leq C\|f\|_{L^p},\\
&&\bigg\|\textrm{p.v.}\int_{u(0)}^{u(1)}\frac{\int_{c}^{c'}f(z)dz}{(c-c')^2}dc'\bigg\|_{L^p}
\leq C\|f\|_{L^p}.
\eeno
\end{lemma}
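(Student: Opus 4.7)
\textbf{Proof proposal for Lemma \ref{lem:sing2}.} The plan is to reduce both bounds to the $L^p$-boundedness of the maximal truncated Hilbert transform $H^{\ast}$, the Hardy--Littlewood maximal operator $M$, and Hardy's inequality, via a single integration by parts in $c'$. Set $F(c')=\int_{c}^{c'}f(z)\,dz$, so $F'(c')=f(c')$ and $F(c)=0$. Since $\partial_{c'}\bigl(1/(c-c')\bigr)=1/(c-c')^{2}$, integrating by parts on $(u(0),c-\epsilon)$ and on $(c+\epsilon,u(1))$ separately gives
\begin{align*}
\int_{\{|c-c'|>\epsilon\}\cap[u(0),u(1)]}\!\!\frac{F(c')}{(c-c')^{2}}\,dc'
&=\frac{F(c-\epsilon)+F(c+\epsilon)}{\epsilon}-\frac{F(u(0))}{c-u(0)}-\frac{F(u(1))}{u(1)-c}-H^{\epsilon}_{[u(0),u(1)]}f(c),
\end{align*}
where $H^{\epsilon}_{[u(0),u(1)]}f(c)=\int_{|c-c'|\ge \epsilon,\,c'\in[u(0),u(1)]}\frac{f(c')}{c-c'}\,dc'$. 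The boundary contributions at $c\pm\epsilon$ combine cleanly because $F(c\pm\epsilon)$ are integrals over $(c-\epsilon,c)$ and $(c,c+\epsilon)$, so $|F(c-\epsilon)+F(c+\epsilon)|\le \int_{c-\epsilon}^{c+\epsilon}|f|$.

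With this identity in hand, estimate each term pointwise, uniformly in $\epsilon$. The first term is bounded by $2Mf(c)$. The last term is at most $H^{\ast}(f\chi_{[u(0),u(1)]})(c)$. For the boundary terms at $u(0)$ and $u(1)$, write
\begin{align*}
\frac{|F(u(0))|}{c-u(0)}\le \frac{1}{c-u(0)}\int_{u(0)}^{c}|f(z)|\,dz,\qquad \frac{|F(u(1))|}{u(1)-c}\le \frac{1}{u(1)-c}\int_{c}^{u(1)}|f(z)|\,dz,
\end{align*}
both of which are controlled in $L^{p}$ by Hardy's inequality for any $p\in(1,\infty)$. Since $M$, $H^{\ast}$ and the two Hardy-type operators are all bounded on $L^{p}(u(0),u(1))$ for $1<p<\infty$, summing yields
\begin{align*}
\Bigl\|\sup_{\epsilon}\Bigl|\int_{\{|c-c'|>\epsilon\}\cap[u(0),u(1)]}\frac{F(c')}{(c-c')^{2}}\,dc'\Bigr|\Bigr\|_{L^{p}}\le C\|f\|_{L^{p}},
\end{align*}
which is the first inequality.

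The second inequality will follow at once from the first: the integration-by-parts identity shows that as $\epsilon\to 0^{+}$, the first term tends to $0$ whenever $c$ is a Lebesgue point of $f$ (since $F(c\pm\epsilon)=o(\epsilon)$ at such points), while $H^{\epsilon}_{[u(0),u(1)]}f(c)\to \mathrm{H}f(c)$ a.e. by the standard a.e.-existence of the Hilbert transform. Hence the principal value $\mathrm{p.v.}\int_{u(0)}^{u(1)}\frac{F(c')}{(c-c')^{2}}\,dc'$ exists a.e. and coincides pointwise a.e. with
$-\frac{F(u(0))}{c-u(0)}-\frac{F(u(1))}{u(1)-c}-\mathrm{H}f(c)$, which is again $L^{p}$-bounded by $\|f\|_{L^{p}}$ via Hardy and the $L^{p}$-boundedness of $\mathrm{H}$. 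No step is particularly delicate; the only care needed is the bookkeeping of the two boundary contributions at $u(0),u(1)$, where one must verify that the factors $1/(c-u(0))$ and $1/(u(1)-c)$ are absorbed by Hardy's inequality rather than causing an endpoint blow-up, which is exactly the content of Hardy for $p\in(1,\infty)$.
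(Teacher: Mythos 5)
Your proof is correct and follows essentially the same route as the paper: integrate by parts in $c'$ to rewrite the truncated integral as a truncated Hilbert transform plus the averages $\frac{1}{\epsilon}\int_{c-\epsilon}^{c+\epsilon}$ near $c$ plus Hardy-type boundary terms at $u(0)$ and $u(1)$, and then invoke the $L^p$ bounds for the maximal Hilbert transform, the Hardy--Littlewood maximal function and Hardy's inequality. One small slip in a parenthetical: at a Lebesgue point $F(c\pm\epsilon)=\pm f(c)\epsilon+o(\epsilon)$, not $o(\epsilon)$ individually; it is the cancellation $F(c-\epsilon)+F(c+\epsilon)=o(\epsilon)$ that makes the first term vanish as $\epsilon\to 0^{+}$, which is exactly what your identity (and the paper's) actually uses.
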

\begin{proof}
For $u(0)\leq c<u(0)+\epsilon$, we have
\begin{align*}
\int_{c+\epsilon}^{u(1)}\frac{\int_{c}^{c'}f(z)dz}{(c-c')^2}dc'
=&\int_{c+\epsilon}^{u(1)}-\partial_{c'}\big(\frac{1}{c'-c}\big)\int_{c}^{c'}f(z)dzdc'\\
=&-\frac{1}{u(1)-c}\int_{c}^{u(1)}f(z)dz
+\frac{1}{\epsilon}\int_c^{c+\epsilon}f(z)dz
+\int_{c+\epsilon}^{u(1)}\frac{f(z)}{z-c}dz.
\end{align*}
For $u(0)+\epsilon\leq c\leq u(1)-\epsilon$, we have
\begin{align*}
&\int_{c+\epsilon}^{u(1)}+\int_{u(0)}^{c-\epsilon}\frac{\int_{c}^{c'}f(z)dz}{(c'-c)^2}dc'\\
&=-\int_{c+\epsilon}^{u(1)}+\int_{u(0)}^{c-\epsilon}\partial_{c'}\big(\frac{1}{c'-c}\big)\int_{c}^{c'}f(z)dzdc'\\
&=-\frac{1}{u(1)-c}\int_{c}^{u(1)}f(z)dz
+\frac{1}{\epsilon}\int_c^{c+\epsilon}f(z)dz
+\int_{c+\epsilon}^{u(1)}+\int_{u(0)}^{c-\epsilon}\frac{f(z)}{z-c}dz\\
&\quad-\frac{1}{\epsilon}\int_{c-\epsilon}^cf(z)dz
+\frac{1}{c-u(0)}\int_{u(0)}^cf(z)dz.
\end{align*}
For $u(1)-\epsilon<c\leq u(1)$,
\begin{align*}
\int_{u(0)}^{c-\epsilon}\frac{\int_{c}^{c'}f(z)dz}{(c'-c)^2}dc'
=&\int_{u(0)}^{c-\epsilon}-\partial_{c'}\big(\frac{1}{c'-c}\big)\int_{c}^{c'}f(z)dzdc'\\
=&\frac{1}{c-u(0)}\int_{u(0)}^{c}f(z)dz
-\frac{1}{\epsilon}\int_{c-\epsilon}^cf(z)dz
+\int_{u(0)}^{c-\epsilon}\frac{f(z)}{z-c}dz.
\end{align*}
Thus, we obtain
\begin{align*}
\int_{\{|c-c'|>\epsilon\}\cap[u(0),u(1)]}&\frac{\int_{c}^{c'}f(z)dz}{(c-c')^2}dc'=-\chi_{D_0}(c)H^{\epsilon}(f\chi_{D_0})(c)\\
&+\Big(\frac{1}{c-u(0)}\int_{u(0)}^{c}f(z)dz-\frac{1}{\epsilon}\int_{c-\epsilon}^cf(z)dz\Big)\chi_{[\epsilon,u(1)]}(c)\\
&-\Big(\frac{1}{u(1)-c}\int_{c}^{u(1)}f(z)dz-\frac{1}{\epsilon}\int_c^{c+\epsilon}f(z)dz\Big)\chi_{[u(0),u(1)-\epsilon]}(c).
\end{align*}
Taking $\epsilon\to 0+$ and using the fact that $\frac{1}{\epsilon}\int_{x-\epsilon}^xf(y)dy-\frac{1}{\epsilon}\int_x^{x+\epsilon}f(y)dy\to 0$,
we deduce
\begin{align*}
\textrm{p.v.}\int_{u(0)}^{u(1)}\frac{\int_{c}^{c'}f(z)dz}{(c-c')^2}dc'
=&-\chi_{D_0}(c)H(f\chi_{D_0})(c)\\
&+\frac{1}{c-u(0)}\int_{u(0)}^cf(z)dz-\frac{1}{u(1)-c}\int_c^{u(1)}f(z)dz.
\end{align*}
With the above identities, the boundedness in $L^p$ follows from $L^p$ boundedness of Hilbert transform, maximal Hilbert transform
and Hardy-Littlewood maximal function.
\end{proof}

\subsection{Boundedness of multilinear singular integral operators}

\begin{lemma}\label{lem:T-L2}
There exists a constant $C$ independent of $\al$ such that for any $p\in (1,\infty)$,
\beq
\label{equ: estimate bbT Lp}
\Big\|\bbT\Big(f, g, \phi_1(u^{-1},c),\,\frac{1}{\phi_1(u^{-1},c)^2}\Big)\Big\|_{L^p}
\leq C(\|f\|_{L^{\infty}}+\|f'\|_{L^{\infty}})\|g\|_{L^p}.
\eeq
\end{lemma}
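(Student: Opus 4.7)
The plan is to reduce $\bbT\bigl(f,g,\phi_1(u^{-1},c),1/\phi_1(u^{-1},c)^2\bigr)$ to a principal-value integral handled by Lemma \ref{lem:sing2}, plus error pieces controlled by the Hardy--Littlewood maximal function, using the sharp bounds of Proposition \ref{prop:phi}. First I would split into a low-frequency regime $|\al|\le M_0$ and a high-frequency regime $|\al|\ge M_0$. In the low-frequency case Lemma \ref{lem:Ray-D0} yields uniform $C^2$ bounds on $\phi_1(u^{-1},c)$ and its reciprocal, so the operator equals $\int\frac{\int_c^{c'}g\,dz}{(c-c')^2}f(c')dc'$ modulo smooth multipliers, and the conclusion is immediate from Lemma \ref{lem:sing2}.

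For $|\al|\ge M_0$ I would decompose
\begin{align*}
\int_c^{c'}g(z)\phi_1(u^{-1}(z),c)\,dz
&=\phi_1(u^{-1}(c'),c)\int_c^{c'}g(z)\,dz\\
&\quad-\int_c^{c'}g(z)\bigl(\phi_1(u^{-1}(c'),c)-\phi_1(u^{-1}(z),c)\bigr)dz,
\end{align*}
writing $\bbT=\bbT_1+\bbT_2$ correspondingly. In $\bbT_1$, one power of $\phi_1$ in the numerator cancels one of the two factors of $1/\phi_1(u^{-1}(c'),c)^2$, leaving kernel $\frac{f(c')}{(c-c')^2\,\phi_1(u^{-1}(c'),c)}$. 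Since $\phi_1\ge 1$ by Remark \ref{rem:Ray-positive}, Taylor-expanding $f(c')=f(c)+O(\|f'\|_{L^\infty}|c'-c|)$ and writing $1/\phi_1(u^{-1}(c'),c)=1+\bigl(1/\phi_1(u^{-1}(c'),c)-1\bigr)$ splits $\bbT_1$ into (a) the canonical operator $f(c)\int\frac{\int_c^{c'}g\,dz}{(c-c')^2}dc'$ controlled in $L^p$ by Lemma \ref{lem:sing2}, and (b) remainders that lose a factor of $|c'-c|$ and are absorbed into the Hardy--Littlewood maximal function, with total cost $\|f\|_{L^\infty}+\|f'\|_{L^\infty}$.

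For $\bbT_2$, Proposition \ref{prop:phi} gives $|\partial_y\phi_1(y,c)|\le C\al\,\phi_1(y,c)$, and the integral representation of $\phi_1$ implies the monotonicity $\phi_1(u^{-1}(z),c)\le \phi_1(u^{-1}(c'),c)$ when $z$ lies between $c$ and $c'$. Combining these,
\[
|\phi_1(u^{-1}(c'),c)-\phi_1(u^{-1}(z),c)|\le C\al\,|c'-z|\,\phi_1(u^{-1}(c'),c),
\]
so the inner integral is bounded by $C\al\,(c'-c)^2\,\phi_1(u^{-1}(c'),c)(M g)(c)$. Division by $(c-c')^2\,\phi_1(u^{-1}(c'),c)^2$ leaves the effective kernel $\al/\phi_1(u^{-1}(c'),c)$, whose $L^1$ mass in $c'$ is uniformly bounded in $\al$ using the lower bound $\phi_1(y,c)\ge C^{-1}\sinh\al(y-y_c)/(\al(y-y_c))$ and $\int_0^\infty w/\sinh w\,dw<\infty$. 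The $L^p$ bound on $\bbT_2$ then follows from the $L^p$-boundedness of $M$.

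The main obstacle is uniformity in $\al$: the factors $\phi_1(u^{-1}(z),c)$ grow exponentially in $\al|z-c|$ while $\phi_1(u^{-1}(c'),c)^2$ grows even faster, so any naive pointwise estimation destroys the needed cancellation. The decomposition above is engineered precisely so that the numerator powers of $\phi_1$ cancel against $\phi_1^{-2}$ exactly on the ``diagonal'' $z=c'$, and the off-diagonal error picks up a factor $\al|c'-z|$ that is absorbed by the extra decay of $1/\phi_1$ at scale $\al|u^{-1}(c')-y_c|\gtrsim 1$.
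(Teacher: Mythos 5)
Your proposal is correct in substance, but it implements the key cancellation differently from the paper, so a comparison is worthwhile. The paper freezes $\phi_1$ at the inner endpoint $z=c$, writing $\phi_1(u^{-1}(z),c)=1+(\phi_1(u^{-1}(z),c)-1)$ and using the quadratic vanishing $|\phi_1-1|\le C\al^2(y-y_c)^2\phi_1$ (Lemma \ref{lem:phi1-est}, Remark \ref{rem:phi1}) for the error term $I_1$; each resulting piece is then bounded separately in $L^\infty$ and $L^1$ and interpolated. You instead freeze $\phi_1$ at the outer point $z=c'$, so that one power cancels against $\phi_1(u^{-1}(c'),c)^{-2}$, and control the commutator $\bbT_2$ by the logarithmic-derivative bound $|\partial_y\phi_1|\le C\al\phi_1$ together with the monotonicity $\phi_1(u^{-1}(z),c)\le\phi_1(u^{-1}(c'),c)$; the resulting kernel $\al/\phi_1(u^{-1}(c'),c)$ has $\al$-uniform $L^1$ mass by $\int_0^\infty w/\sinh w\,dw<\infty$, and everything is dominated pointwise by $Mg(c)$, giving all $L^p$ at once without interpolation. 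Both routes funnel the genuinely singular part into Lemma \ref{lem:sing2}, and both rely on the same two uniform facts from Proposition \ref{prop:phi} (exponential lower bound on $\phi_1$ and derivative control), so the approaches are cousins; yours is somewhat cleaner on the commutator piece, the paper's is more explicit on the remainders.

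One point in your plan is stated too loosely: the remainder $f(c)\bigl(\phi_1(u^{-1}(c'),c)^{-1}-1\bigr)$ in $\bbT_1$ does \emph{not} simply ``lose a factor of $|c'-c|$''; it is $O(\al^2|c'-c|^2)$ only for $|c'-c|\lesssim 1/\al$ and is $O(1)$ beyond that, so a naive bound produces a non-integrable $|c'-c|^{-1}$ kernel (a $\ln\al$ loss). You must split at $|c'-c|=1/\al$: near the diagonal use Lemma \ref{lem:phi1-est}, and in the far region write $\phi_1^{-1}-1=\phi_1^{-1}+(-1)$, absorbing the $-1$ into the truncated (maximal) singular integral of Lemma \ref{lem:sing2} and using the exponential smallness of $\phi_1^{-1}$ for the rest. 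This is exactly the paper's $I_3$--$I_6$, and you clearly have the mechanism in mind elsewhere in the plan, but it should be made explicit for this term.
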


\begin{proof}
We write
\begin{align*}
&\bbT\Big(f, g, \phi_1(u^{-1},c),\,\frac{1}{\phi_1(u^{-1},c)^2}\Big)(c)\\
=&\int_{u(0)}^{u(1)}\frac{\int_{c}^{c'}g(z)(\phi_1(u^{-1}(z),c)-1)dz}{(c'-c)^2\phi_1(u^{-1}(c'),c)^2}f(c')dc'\\%1
&+\int_{u(0)}^{u(1)}\frac{\int_{c}^{c'}g(z)dz}{(c'-c)^2}\frac{f(c')-f(c)}{\phi_1(u^{-1}(c'),c)^2}dc'\\%2
&+f(c)\int_{\{|c'-c|\leq \frac{1}{\al}\}\cap D_0}\frac{\int_{c}^{c'}g(z)dz}{(c'-c)^2}\frac{(\phi_1(u^{-1}(c'),c)+1)(\phi_1(u^{-1}(c'),c)-1)}{\phi_1(u^{-1}(c'),c)^2}dc'\\%3
&+f(c)\int_{\{|c'-c|>\frac{1}{\al}\}\cap D_0}\frac{\int_{c}^{c'}g(z)dz}{(c'-c)^2\phi_1(u^{-1}(c'),c)^2}dc'\\%4
&+f(c)\textrm{p.v.}\int_{u(0)}^{u(1)}\frac{\int_{c}^{c'}g(z)dz}{(c'-c)^2}dc'%5
-f(c)\textrm{p.v.}\int_{\{|c'-c|>\frac{1}{\al}\}\cap D_0}\frac{\int_{c}^{c'}g(z)dz}{(c'-c)^2}dc'\\%6
\triangleq&I_1+\cdots I_6.
\end{align*}

{\bf Step 1.} Estimate of $I_1$\medskip

We infer from  Remark \ref{rem:phi1} that
\begin{align*}
I_1=\al^2\int_{u(0)}^{u(1)}\frac{\int_{c}^{c'}g(z)(u^{-1}(z)-u^{-1}(c))^2\mathcal{T}(\phi_1)(u^{-1}(z),c)dz}{(c'-c)^2\phi_1(u^{-1}(c'),c)^2}f(c')dc'.
\end{align*}
For $|z-c|\leq|c'-c|$, we have $(u^{-1}(z)-u^{-1}(c))^2\leq C(c'-c)^2$. And by Remark \ref{rem:phi1},
\beno
\mathcal{T}(\phi_1)(u^{-1}(z),c)\leq C\phi_1(u^{-1}(z),c)\leq C\phi_1(u^{-1}(c'),c).
\eeno
Then we infer that
\begin{align*}
|I_1|\leq C\al^2\|f\|_{L^\infty}\int_{u(0)}^{u(1)}\int_{c}^{c'}|g(z)|dz\frac{1}{\phi_1(u^{-1}(c'),c)}dc',
\end{align*}
from which and Proposition \ref{prop:phi}, it follows that
\begin{align*}
|I_1|\leq &C\al^2\|f\|_{L^\infty}\|g\|_{L^{\infty}}\int_0^1\frac{|y-y_c|}{\phi_1(y,c)}dy\\
\leq &C\bigg(\int_{0}^{\al(1-y_c)}\frac{y^2}{\sinh y}dy
+\int_{0}^{\al y_c}\frac{y^2}{\sinh y}dy\bigg)\|f\|_{L^\infty}\|g\|_{L^{\infty}}\\
\leq &C\|f\|_{L^\infty}\|g\|_{L^{\infty}}.
\end{align*}
And by Proposition \ref{prop:phi} again, we get
\begin{align*}
\|I_{1}\|_{L^1}
\leq &C\|f\|_{L^\infty}\bigg(\int_0^1\int_0^{y_c}\al^2\int_z^{y_c}|g(u(y''))|\frac{\al(z-y_c)}{\sinh \al(z-y_c)}dy''dzdy_c
\\\nonumber
&+\int_0^1\int_{y_c}^1\al^2\int_{y_c}^z|g(u(y''))|\frac{\al(z-y_c)}{\sinh \al(z-y_c)}dy''dzdy_c\bigg)\\\nonumber
\leq &C\al^2\|f\|_{L^\infty}\int_0^1|g(u(y''))|\int_0^{y''}\int_{y''}^1\frac{\al(z-y_c)}{\sinh \al(z-y_c)}dy_cdzdy''\\\nonumber
&+C\al^2\|f\|_{L^\infty}\int_0^1|g(u(y''))|\int_0^{y''}\int_{y''}^1\frac{\al(z-y_c)}{\sinh \al(z-y_c)}dzdy_cdy''.
\end{align*}
Using the fact that
\beno
\frac{z}{\sinh z}\leq 2\frac{\sinh\frac{z}{2}}{\sinh z}\leq \frac{1}{\cosh \frac{z}{2}}\leq \frac{1}{e^\frac{z}{2}},~z>0.
\eeno
we deduce that
\begin{align*}
\bigg|\al^2\int_0^y\int_y^1\frac{\al(z-y_c)}{\sinh \al(z-y_c)}dy_cdz\bigg|
\leq& \int_{\al y}^{+\infty}\int_0^{\al y}e^{\frac{1}{2}(z-y_c)}dzdy_c\\
\leq& 2\int_{\al y}^{+\infty}e^{\frac{1}{2}\al (y-y_c)}-e^{-\frac12y_c}dy_c\leq 8.
\end{align*}
This shows that
\beno
\|I_1\|_{L^1}\leq C\|f\|_{L^\infty}\|g\|_{L^1}.
\eeno
Then by the interpolation, we get
\beno
\|I_1\|_{L^p}\leq C\|f\|_{L^\infty}\|g\|_{L^p}.
\eeno

{\bf Step 2.} Estimate of $I_2$\medskip

Thanks to $\phi_1(u^{-1}(z),c)^2\geq1$, we have
\begin{align*}
|I_2|\leq&
C\int_{u(0)}^{u(1)}\frac{\int_{c}^{c'}|g(z)|dz}{c'-c}\frac{\|f'\|_{L^{\infty}}}{\phi_1(u^{-1}(c'),c)^2}dc'\\
\leq&C\|f'\|_{L^{\infty}}\int_{u(0)}^{u(1)}\frac{\int_{c}^{c'}|g(z)|dz}{c'-c}dc'\le C\|f'\|_{L^{\infty}}\|g\|_{L^\infty}.
\end{align*}
For $L^1$ estimate, we write
\begin{align*}
\int_{u(0)}^{u(1)}\frac{\int_{c}^{c'}|g(z)|dz}{c'-c}dc'
=&\int_{u(0)}^{c}\frac{\int_{c}^{c'}|g(z)|dz}{c'-c}dc'
+\int_{c}^{u(1)}\frac{\int_{c}^{c'}|g(z)|dz}{c'-c}dc'\\
=&\int_{u(0)}^c|g(z)|\int_{u(0)}^{z}\frac{1}{c-c'}dc'dz
+\int_{c}^{u(1)}|g(z)|\int_{z}^{u(1)}\frac{1}{c'-c}dc'dz\\
=&\int_{u(0)}^c|g(z)|\ln\frac{c-u(0)}{c-z}dz
+\int_{c}^{u(1)}|g(z)|\ln\frac{u(1)-c}{z-c}dz,
\end{align*}
from which, it follows that
\begin{align*}
\bigg\|\int_{u(0)}^{u(1)}\frac{\int_{c}^{c'}|g(z)|dz}{c'-c}dc'\bigg\|_{L^1}\leq &
\int_{u(0)}^{u(1)}\int_{u(0)}^c|g(z)|\ln\frac{c-u(0)}{c-z}dz
+\int_{c}^{u(1)}|g(z)|\ln\frac{u(1)-c}{z-c}dzdc\\\nonumber
\leq &\int_{u(0)}^{u(1)}|g(z)|\int_{z}^{u(1)}\ln\frac{c-u(0)}{c-z}dcdz
+\int_{u(0)}^{u(1)}|g(z)|\int_{u(0)}^{z}\ln\frac{u(1)-c}{z-c}dcdz\\\nonumber
\leq &C\|g\|_{L^{1}}.
\end{align*}
This gives
\beno
\|I_2\|_{L^1}\le C\|f'\|_{L^{\infty}}\|g\|_{L^{1}}.
\eeno
By the interpolation, we get
\begin{align*}
\|I_2\|_{L^{p}}\leq C\|f'\|_{L^{\infty}}\|g\|_{L^{p}}.
\end{align*}

{\bf Step 3.} Estimate of $I_3$\medskip

By Lemma \ref{lem:phi1-est}, $|\phi_1(u^{-1}(c'),c)-1|\leq \al^2(c'-c)^2\phi_1(u^{-1}(c'),c)$. Then we have
\begin{align*}
|I_3|\leq& C\al^2\|f\|_{L^\infty}\int_{\{0\leq c'-c\leq \frac{1}{\al}\}\cap D_0}\int_{c}^{c'}|g(z)|dzdc'\\
&+C\al^2\|f\|_{L^\infty}\int_{\{0\leq c-c'\leq \frac{1}{\al}\}\cap D_0}\int_{c'}^{c}|g(z)|dzdc'\\
\leq & C\|f\|_{L^\infty}\al^2\int_{c}^{\min\big\{c+\frac{1}{\al},u(1)\big\}}\int_{c}^{c'}|g(z)|dzdc'\\
&+C\|f\|_{L^\infty}\al^2\int_{\max\big\{c-\frac{1}{\al},u(0)\big\}}^c\int_{c'}^{c}|g(z)|dzdc',
\end{align*}
from which, it follows that
\begin{align*}
\|I_3\|_{L^{\infty}}\leq& C\|f\|_{L^\infty}\bigg\|
\al^2\int_{\{|c'-c|\leq \frac{1}{\al}\}\cap D_0}
\|g\|_{L^{\infty}}|c'-c|dc'\bigg\|_{L^{\infty}}\\
\leq &C\|f\|_{L^\infty}\|g\|_{L^{\infty}},
\end{align*}
and
\begin{align*}
\|I_3\|_{L^{1}}\leq& C\al^2\|f\|_{L^\infty}\bigg(\int_{u(0)}^{u(1)}\int_{c}^{\min\big\{c+\frac{1}{\al},u(1)\big\}}\int_{c}^{c'}|g(z)|dzdc'dc\\
&\qquad\qquad\quad+\int_{u(0)}^{u(1)}\int_{\max\big\{c-\frac{1}{\al},u(0)\big\}}^c\int_{c'}^{c}|g(z)|dzdc'dc\bigg)\\
\leq& C\al^2\|f\|_{L^\infty}\bigg(\int_{u(0)}^{u(1)}|g(z)|\int_{z-\frac{1}{\al}}^{z}\big(c+\frac{1}{\al}-z\big)dcdz\\
&\qquad\qquad\quad+\int_{u(0)}^{u(1)}|g(z)|\int_{z}^{z+\frac{1}{\al}}\big(\frac{1}{\al}+z-c\big)dcdz\bigg)\\
\leq& C\|f\|_{L^\infty}\|g\|_{L^{1}}.
\end{align*}
Thus by the interpolation, we get
\begin{align*}
\|I_{3}\|_{L^{p}}\leq& C\|f\|_{L^\infty}\|g\|_{L^{p}}.
\end{align*}

{\bf Step 4.} Estimate of $I_4$\medskip

We infer from Proposition \ref{prop:phi} that
\beno
\f 1 {\phi_1(u^{-1}(c'),c)}\geq C^{-1}\frac{\al(u^{-1}(c')-u^{-1}(c))}{\sinh\al(u^{-1}(c')-u^{-1}(c))},
\eeno
which gives
\begin{align*}
|I_4|\leq& C\|f\|_{L^\infty}\int_{|c'-c|\geq \frac{1}{\al}}
\|g\|_{L^{\infty}}\frac{\al^2|u^{-1}(c')-u^{-1}(c)|}{\sinh^2\al(u^{-1}(c')-u^{-1}(c))}dc'\\
\leq& C\|f\|_{L^\infty}\|g\|_{L^\infty}\int_1^{\infty}\frac{z}{\sinh^2 z}dz\\
\leq& C\|f\|_{L^\infty}\|g\|_{L^\infty},
\end{align*}
and
\begin{align*}
\|I_4\|_{L^1}\leq &C\al^2\|f\|_{L^\infty}\int_{u(0)}^{u(1)}\int_{\{|z-c|\geq \frac{1}{\al}\}\cap\{z\geq c\}\cap [u(0),u(1)]}\int_{c}^z|g(c')|\frac{1}{\phi_1(u^{-1}(z),c)^2}dc'dzdc\\
&+C\al^2\|f\|_{L^\infty}\int_{u(0)}^{u(1)}\int_{\{|z-c|\geq \frac{1}{\al}\}\cap\{z< c\}\cap [u(0),u(1)]}\int_{z}^c|g(c')|\frac{1}{\phi_1(u^{-1}(z),c)^2}dc'dzdc.
\end{align*}
By a change of variable, the first term is bounded by
\begin{align*}
&C\|f\|_{L^\infty}\int_0^1\int_{\{|u(y'')-c|\geq \frac{1}{\al}\}\cap\{u(y'')\geq c\}\cap [0,1]}\int_{y_c}^{y''}|g(u(y'))|\frac{\al^2}{\phi_1(y'',c)^2}dy'dy''dy_c\\
&\leq C\|f\|_{L^\infty}\int_0^1|g(u(y'))|\int_{0}^{y'}\int_{y'}^1\frac{\al^2}{\phi_1(y'',c)^2}dy''dy_cdy'\\
&\leq C\|f\|_{L^\infty}\|g\|_{L^{1}},
\end{align*}
where we used the fact that $\phi_1(y'',c)\geq C^{-1}\frac{\sinh \al(y''-y_c)}{\al(y''-y_c)}$ so that
\begin{align*}
\Big|\int_{0}^{y'}\int_{y'}^1\frac{\al^2}{\phi_1^2(y'',c)}dy''dy_c\Big|
\leq &C\int_{0}^{y'}\int_{y'}^1\frac{\al^4(y''-y_c)^2}{\sinh \al(y''-y_c)^2}dy''dy_c\\
\leq &C\int_{0}^{\al y'}\int_{\al y'}^{+\infty}e^{-1/2(y-y_c)}dydy_c\le C.
\end{align*}
The estimate of the second term is the same. This gives
\beno
\|I_4\|_{L^1}\le C\|f\|_{L^\infty}\|g\|_{L^{1}}.
\eeno
Thus, the interpolation gives
\beno
\|I_4\|_{L^p}\le C\|f\|_{L^\infty}\|g\|_{L^{p}}.
\eeno

{\bf Step 5.} Estimates of $I_5$ and $I_6$\medskip

Lemma \ref{lem:sing2} ensures that
\beno
\|I_5\|_{L^p}+\|I_6\|_{L^p}\le C\|f\|_{L^\infty}\|g\|_{L^{p}}.
\eeno

Putting the estimates in Step 1--Step 5 together, we conclude the lemma.
\end{proof}

\begin{lemma}\label{lem:B01-L2}
There exists a constant $C$ independent of $\al$ such that for any $p\in (1,\infty)$,
\begin{align*}
&\|\mathbb{B}_0(f,\rho g, F,\partial_cG)\|_{L^p}
+\|\mathbb{B}_1(f,\rho g,F,\partial_cG)\|_{L^p}\\\nonumber
&+\|\mathbb{B}_0(f,g,(\partial_z+\partial_c)F,G)\|_{L^p}
+\|\mathbb{B}_1(f,g,(\partial_z+\partial_c)F,G)\|_{L^p}
\leq C\|f\|_{L^\infty}\|g\|_{L^p},
\end{align*}
where $F(z,c)=\phi_1(u^{-1}(z),c)$ and $G(z,c)=\frac{1}{\phi_1(u^{-1}(z),c)^2}$. We
also have
\beno
\|\mathcal{B}(\rho g)\|_{L^p}\le \f 1 {\al}\|g\|_{L^p}.
\eeno
\end{lemma}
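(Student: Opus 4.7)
The four estimates share a common Hardy-type structure: for $\mathbb{B}_0$ a prefactor $(u(1)-c)/(c-u(0))$ multiplying an integral over $[u(0),c]$, and for $\mathbb{B}_1$ the symmetric structure over $[c,u(1)]$, so by the change $u(0)\leftrightarrow u(1)$ it suffices to treat the $\mathbb{B}_0$ versions. The plan is to obtain pointwise control of each integrand using the sharp $\phi_1$-estimates of Proposition \ref{prop:phi} and Proposition \ref{prop:phi-good}, and then conclude by Hardy's inequality on $[u(0),u(1)]$. The central comparison is between $\phi_1(u^{-1}(z),c)$ and the boundary value $\phi_1(0,c)$: combining Proposition \ref{prop:phi} with the elementary inequality $\sinh(\al(y_c-y'))/\sinh(\al y_c)\le e^{-\al y'}$ for $0\le y'\le y_c$ yields an exponentially decaying ratio bound
$$\frac{\phi_1(u^{-1}(z),c)}{\phi_1(0,c)^2}\le C\,\frac{\al y_c^2\, e^{-\al(y'+y_c)}}{y_c-y'}\qquad(\al y_c\ge 1),$$
with $y'=u^{-1}(z)$, and a trivial $O(1)$ bound when $\al y_c\le 1$.

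For $\|\mathbb{B}_0(f,\rho g,F,\partial_c G)\|_{L^p}$ I use that $G(u(0),c)=1/\phi_1(0,c)^2$ depends only on $c$, so $\partial_c G(u(0),c)=-2\partial_c\phi_1(0,c)/\phi_1(0,c)^3$, and the bound $|\partial_c\phi_1(0,c)|\le C\al\,\phi_1(0,c)$ from Proposition \ref{prop:phi}. The vanishing factor $\rho(z)$ at $z=u(0)$ offsets the $1/(c-u(0))$ in the prefactor, and the exponential decay above absorbs the extra factor of $\al$ coming from $\partial_c G$. For $\|\mathbb{B}_0(f,g,(\partial_z+\partial_c)F,G)\|_{L^p}$, Proposition \ref{prop:phi-good} provides $|(\partial_z+\partial_c)F|\le C\al^2(u^{-1}(z)-y_c)^2\phi_1$; the quadratic vanishing $(u^{-1}(z)-y_c)^2\sim(z-c)^2$ absorbs the $\al^2$ via the same sinh-ratio bounds used in the proof of Lemma \ref{lem:T-L2}, after which Hardy's inequality yields the desired $L^p$ bound.

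The most delicate step is the $1/\al$ improvement in $\|\mathcal{B}(\rho g)\|_{L^p}$, where no derivative acts on $F$ or $G$, so the gain must come entirely from the interplay of the ratio estimate with the weight $\rho(z)$. I split into the regimes $\al y_c\le 1$ and $\al y_c\ge 1$. In the first, all ingredients are $O(1)$; the Hardy operator acts on a $c$-interval of length $\lesssim 1/\al$, which contributes a factor $1/\al$ in the $L^p$ norm. In the second, H\"older's inequality together with the moment bound $\int_0^{y_c}(y'e^{-\al y'})^{p'}dy'\lesssim \al^{-p'-1}$ and the concentration estimate $\|\al y_c e^{-\al y_c}\|_{L^p(dy_c)}\lesssim \al^{-1/p}$ combine to give exactly the $1/\al$ gain. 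The main obstacle is controlling the $(y_c-y')^{-1}$ near-diagonal singularity in the ratio bound, which requires an additional subdivision of the inner integral at $|y_c-y'|=1/\al$ (switching to the crude bound $\phi_1\sim 1$ near $y'=y_c$) and careful bookkeeping of how the various $\al$-powers from H\"older combine to constants independent of $\al$ for the first three bounds and to $C/\al$ for the last.
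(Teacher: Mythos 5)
Your proposal is correct, and its first half coincides with the paper's: both reduce each term to a pointwise kernel bound via Proposition \ref{prop:phi} (for $\partial_c G(u(0),c)$, via $|\partial_c\phi_1|\le C\al\,\phi_1$) and Proposition \ref{prop:phi-good} (for $(\partial_z+\partial_c)F$), together with the monotonicity $\phi_1(u^{-1}(z),c)\le\phi_1(0,c)$ for $z\in[u(0),c]$ and the cancellation of $\rho(z)$ against the prefactor $1/(c-u(0))$. Where you diverge is the passage from the pointwise bound to $L^p$. The paper bounds everything by kernels of the form $\frac{(\al y_c)^k}{\sinh\al y_c}\int_0^{y_c}|g|$, proves the $L^\infty\to L^\infty$ and $L^1\to L^1$ endpoint estimates separately (the latter by Fubini and $\int_0^\infty z^k/\sinh z\,dz<\infty$, with the extra $1/\al$ for $\mathcal{B}(\rho g)$ falling out of the change of variables $z=\al y_c$), and interpolates. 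You instead absorb the $\al$-powers into $\frac{(\al y_c)^k}{\sinh\al y_c}\le C$ and invoke Hardy's inequality for the $O(1)$ bounds, and for the $1/\al$ gain you run an explicit H\"older computation with the moment bounds $\int_0^{y_c}(y'e^{-\al y'})^{p'}dy'\lesssim\al^{-p'-1}$ and $\|\al y_ce^{-\al y_c}\|_{L^p}\lesssim\al^{-1/p}$ after splitting at $|y_c-y'|=1/\al$; the exponents do combine to $\al^{-1}$ as you claim. Your route stays entirely in $L^p$ (at the price of the near-diagonal bookkeeping and a constant depending on $p$ through Hardy, which is permitted), while the paper's endpoint-plus-interpolation argument is shorter and sidesteps the $1/(y_c-y')$ singularity altogether because the $L^1$ and $L^\infty$ computations never see it. Both are valid; no gap.
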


\begin{proof}
We only prove the estimate of $\mathbb{B}_0$, the estimate of $\mathbb{B}_1$ is similar. We get by Proposition \ref{prop:phi} that
\begin{align*}
\big|\mathbb{B}_0(f,\rho g,F,\pa_cG)(c)\big|
\leq C\|f\|_{L^\infty}\frac{\al}{\phi_1(0,c)}\int_{u(0)}^c|g(z)|dz,
\end{align*}
from which, it follows that
\begin{align*}
\big\|\mathbb{B}_0(f,\rho g,F,\pa_cG)\big\|_{L^{\infty}}
\leq &C\sup_{y\in [0,1]}\frac{\al^2 y^2}{\sinh\al y}\|f\|_{L^\infty}\|g\|_{L^{\infty}}\\
\leq &C\|f\|_{L^\infty}\|g\|_{L^{\infty}},\\
\big\|\mathbb{B}_0(f,\rho g,F,\pa_c G)\big\|_{L^1}
\leq &C\|f\|_{L^\infty}\int_0^1|g(z)|dz\int_0^1\frac{\al^2 y}{\sinh\al y}dy\\
\leq &\|f\|_{L^\infty}\|g\|_{L^{1}}.
\end{align*}

We get by Proposition \ref{prop:phi-good} that
\begin{align*}
|\mathbb{B}_0(f,g,(\partial_z+\partial_c)F,G)(c)|
\leq &C\|f\|_{L^\infty}\frac{\al^2(c-u(0))}{\phi_1(0,c)}\int_{u(0)}^c|g(z)|dz,
\end{align*}
which implies that
\begin{align*}
&\|\mathbb{B}_0(f,g,(\partial_z+\partial_c)F,G)\|_{L^{\infty}}
\leq \|f\|_{L^\infty}\|g\|_{L^{\infty}},\\
&\big\|\mathbb{B}_0(f,g,(\partial_z+\partial_c)F,G)\big\|_{L^1}
\le C\|f\|_{L^\infty}\|g\|_{L^{1}}.
\end{align*}
Then $L^p$ estimate follows by the interpolation.

Notice that
\begin{align*}
|\al\mathcal{B}(\rho g)|\leq C\frac{\al}{\phi_1(0,c)}\int_{u(0)}^c|g(z)|dz+C\frac{\al}{\phi_1(1,c)}\int_{c}^{u(1)}|g(z)|dz.
\end{align*}
Then $L^p$ estimate of $\mathcal{B}(\rho g)$ can be deduced in a similar way.
\end{proof}

\begin{lemma}\label{lem:B-H1}
There exists a constant $C$ independent of $\al$ such that for any $p\in (1,\infty)$,
\begin{align*}
\|\mathcal{B}(g)\|_{L^p}+\|\pa_c(\rho\mathcal{B}(g))\|_{L^p}+\|\rho^{-1}\mathcal{B}(\rho g)\|_{L^p}+\|\mathcal{B}_1(g)\|_{L^p}\le C\|g\|_{L^p}.
\end{align*}
If $g(u(0))=g(u(1))=0$, then we have
\beno
\|\rho\mathcal{B}(g')\|_{L^p}\le C\|g\|_{L^p}.
\eeno
\end{lemma}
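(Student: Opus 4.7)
The plan is to reduce every estimate in the lemma to a pointwise bound of the form
\beno
|\mathcal{B}(g)(u(y_c))|\le CK(\al y_c)\cdot\frac{1}{y_c}\int_0^{y_c}|g(u(y'))|\,dy'+C\wt K(\al(1-y_c))\cdot\frac{1}{1-y_c}\int_{y_c}^1|g(u(y'))|\,dy',
\eeno
where $K,\wt K$ are uniformly bounded on $[0,\infty)$ and decay like $\al y\,e^{-\al y/2}$ at infinity. Such a pointwise bound is then controlled in $L^p(D_0)$ for $p\in(1,\infty)$ by the Hardy--Littlewood maximal inequality in the regime $\al y_c\le 1$, and by a direct H\"older estimate exploiting exponential decay in the regime $\al y_c\ge 1$. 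This reduction turns every expression on the left of the lemma into a uniform bound by $\|g\|_{L^p}$ independent of $\al$.

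To produce these pointwise bounds, I expand $\mathcal{B}$ and $\mathcal{B}_1$ using the explicit formulas for $\mathbb{B}_0$ and $\mathbb{B}_1$. Each term carries a singular prefactor of the form $(u(1)-c)/(c-u(0))$ or its mirror, together with a boundary value of $1/\phi_1(\cdot,c)^2$. By Proposition~\ref{prop:phi}, $1/\phi_1(0,c)^2\le C(\al y_c)^2/\sinh^2(\al y_c)$ and $\phi_1(u^{-1}(z),c)\le\phi_1(0,c)$ for $z\in[u(0),c]$, which after change of variables $c=u(y_c), z=u(y')$ yield the desired kernel structure. For $\mathcal{B}_1$ I further combine these with the sharper bound $|(\partial_z+\partial_c)\phi_1(u^{-1}(z),c)|\le C\al^2(u^{-1}(z)-y_c)^2\phi_1$ from Proposition~\ref{prop:phi-good}, so that the extra $\al^2$-loss is absorbed by $(u^{-1}(z)-y_c)^2\le C(z-c)^2$.

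For $\|\partial_c(\rho\mathcal{B}(g))\|_{L^p}$ I apply Lemma~\ref{lem:iden-B01} to split $\partial_c\mathbb{B}_0$ into four manageable pieces and then multiply by $\rho(c)$; since $\rho(c)/(c-u(0))=u(1)-c$, the singular prefactor is upgraded to $(u(1)-c)^2$, which compensates for the additional $\al$ factor produced by $|\partial_c\phi_1|\le C\al\phi_1$. The bound $\|\rho^{-1}\mathcal{B}(\rho g)\|_{L^p}$ follows from the observation that the inner weight $\rho(z)\le C(c-u(0))(u(1)-z)$ for $z\in[u(0),c]$ supplies an extra $(c-u(0))$ which simultaneously cancels $\rho(c)^{-1}$ and the singularity of the prefactor, again returning to the maximal-function framework.

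For the last inequality $\|\rho\mathcal{B}(g')\|_{L^p}\le C\|g\|_{L^p}$ under $g(u(0))=g(u(1))=0$, I plan to integrate by parts in $z$ after multiplying by $\rho(c)$. Because $\phi_1(u^{-1}(c),c)=\phi_1(y_c,c)=1$ and the boundary term at $z=u(0)$ vanishes by hypothesis, $\int_{u(0)}^c g'(z)\phi_1(u^{-1}(z),c)\,dz$ becomes $g(c)-\int_{u(0)}^c g(z)\partial_z\phi_1(u^{-1}(z),c)\,dz$. Using $|\partial_z\phi_1(u^{-1}(z),c)|\le C\al\phi_1(u^{-1}(z),c)$, the enhanced prefactor $(u(1)-c)^2/\phi_1(0,c)^2$ absorbs the extra $\al$ and the estimate falls back into the framework of the first paragraph.

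The main obstacle is the regime $\al y_c\lesssim 1$, where $1/\phi_1(0,c)^2$ offers no exponential decay and the operators genuinely behave like the Hardy--Littlewood maximal function on $[0,1]$. This is precisely why the inequalities fail at the endpoint $p=1$ and force $p\in(1,\infty)$; finding a pointwise estimate that is uniform in $\al$ and simultaneously compatible with the maximal function bound is the delicate point of the argument.
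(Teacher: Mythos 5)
Your strategy is essentially the paper's: reduce each operator to a pointwise kernel bound using the monotonicity $\phi_1(u^{-1}(z),c)\le\phi_1(0,c)$ for $z\in[u(0),c]$, the lower bound $\phi_1(0,c)\ge C^{-1}\sinh(\al y_c)/(\al y_c)$ from Proposition \ref{prop:phi}, the good-derivative bounds of Proposition \ref{prop:phi-good} for $\mathcal{B}_1$, the weight transfer $\rho(z)\le(c-u(0))(u(1)-z)$ for $\rho^{-1}\mathcal{B}(\rho g)$, and integration by parts plus the vanishing boundary values for $\rho\mathcal{B}(g')$; your maximal-function/H\"older split in the regimes $\al y_c\lessgtr1$ is just a repackaging of the paper's $L^1$--$L^\infty$ interpolation. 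The one place where your route genuinely differs, and is the weaker choice, is the term $\partial_c(\rho\mathcal{B}(g))$: Lemma \ref{lem:iden-B01} expresses $\partial_c\mathbb{B}_0(f,g,F,G)$ through a piece $\mathbb{B}_0(f,g',F,G)$, which cannot be bounded by $\|g\|_{L^p}$ alone (and, if you integrate it by parts to remove $g'$, produces an endpoint trace $g(u(0))$ that is not even defined for $g\in L^p$). The paper instead differentiates $\rho(c)\mathbb{B}_0(\cdots)$ directly, which yields only the harmless terms $g(c)(u(1)-c)^2/\phi_1(0,c)^2$, a lower-order term, and $\int_{u(0)}^c g\,\partial_c\big(\phi_1(u^{-1}(z),c)/\phi_1(0,c)^2\big)dz$; you should do the same. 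Relatedly, your claim that the upgraded prefactor $(u(1)-c)^2$ absorbs the extra factor of $\al$ from $|\partial_c\phi_1|\le C\al\phi_1$ is imprecise: that prefactor is merely bounded, and the $\al$ is actually absorbed by the surviving reciprocal power of $\phi_1(0,c)$ paired with the length $y_c$ of the integration interval, i.e.\ by $\al y_c/\phi_1(0,c)\le C(\al y_c)^2/\sinh(\al y_c)$, exactly as in your general kernel framework.
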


\begin{proof}
The estimate of $\|\mathcal{B}(g)\|_{L^p}$ follows from $L^p$ boundedness of Hardy-Littlewood maximal function.

{\bf Step 1.} Estimate of $\|\pa_c(\rho\mathcal{B}(g))\|_{L^p}$\medskip

It suffices to consider $\mathbb{B}_0$. By the definition of $\mathbb{B}_0$, we have
\begin{align*}
&\pa_c\Big(\rho(c)\mathbb{B}_0\Big(f,g,\phi_1(u^{-1}(\cdot),c),\frac{1}{\phi_1(0,c)^2}\Big)\Big)\\
&=2(c-u(1))f(u(0))\int_{u(0)}^cg(z)\frac{\phi_1(u^{-1}(z),c)}{\phi_1(0,c)^2}dz
+\frac{(u(1)-c)^2f(u(0))g(c)}{\phi_1(0,c)^2}\\
&\quad+(u(1)-c)^2f(u(0))\int_{u(0)}^cg(z)\pa_c\Big(\frac{\phi_1(u^{-1}(z),c)}{\phi_1(0,c)^2}\Big)dz.
\end{align*}
The $L^p$ estimate of the first two terms are obvious. So we only consider the last term, which is bounded by
\beno
C\|f\|_{L^{\infty}}\int_0^{y_c}|g(u(y))|\frac{\al}{\phi_1(0,c)}dy\leq C\|f\|_{L^{\infty}}\frac{\al^2 y_c}{\sinh \al y_c}\int_0^{y_c}|g(u(y))|dy,
\eeno
where we used Proposition \ref{prop:phi}. Thus, $L^\infty$ norm of the last term is bounded by
\begin{align*}
C\sup_{y\in[0,1]}\frac{\al^2 y^2}{\sinh \al y}\|g\|_{L^{\infty}}\|f\|_{L^{\infty}},
\end{align*}
while $L^1$ norm is bounded by
\beno
C\|f\|_{L^{\infty}}\int_0^1|g(u(y))|\int_{y_c}^1\frac{\al^2 y_c}{\sinh \al y_c}dy_cdy
\leq C\|f\|_{L^{\infty}}\|g\|_{L^1}.
\eeno
This implies by the interpolation that
\beno
\|\pa_c(\rho\mathcal{B}(g))\|_{L^p}\leq C\|g\|_{L^p}.
\eeno

{\bf Step 2.} Estimate of $\|\rho^{-1}\mathcal{B}(\rho g)\|_{L^p}$\medskip

Using the fact that
\ben\label{eq:phi1-comp}
\phi_1(u^{-1}(z),c)\leq \phi_1(u^{-1}(c'),c)\quad \textbf{for } |z-c|\leq |c'-c|,
\een
we deduce that
 \begin{align*}
\big|\rho^{-1}\mathcal{B}(\rho g)(c)\big|
\leq &\frac{C}{(c-u(0))^2}\int_{u(0)}^cg(z)(z-u(0))(u(1)-z)dz\\
&+\frac{C}{(u(1)-c)^2}\int_c^{u(1)}g(z)(z-u(0))(u(1)-z)dz\\
\leq &\frac{C}{c-u(0)}\int_{u(0)}^cg(z)dz
+\frac{C}{u(1)-c}\int_c^{u(1)}g(z)dz,
\end{align*}
from which and $L^p$ boundedness of Hardy-Littlewood maximal function, we infer that
\beno
\|\rho^{-1}\mathcal{B}(\rho g)\|_{L^p}\leq C\|g\|_{L^p}.
\eeno

{\bf Step 3.} Estimate of $\|\mathcal{B}_1(g)\|_{L^p}$\medskip

By Proposition \ref{prop:phi-good}, we have
\beno
&&|(\partial_z+\partial_c)\phi_1(u^{-1}(z),c)|\leq C_0\al^2(z-c)^2\phi_1(u^{-1}(z),c),\\
&&\bigg|(\partial_z+\partial_c)\big(\frac{1}{\phi_1(u^{-1}(z),c)^2}\big)\bigg|\leq C_0\al^2(z-c)^2\frac{1}{\phi_1(u^{-1}(z),c)^2},
\eeno
which along with (\ref{eq:phi1-comp}) gives
So we have
\begin{align*}
\mathcal{B}_1(g)\leq \frac{C}{c-u(0)}\int_{u(0)}^c|\varphi(y)|dy+\frac{C}{u(1)-c}\int_c^{u(1)}|\varphi(y)|dy,
\end{align*}
from which and $L^p$ boundedness of Hardy-Littlewood maximal function, we infer that
\beno
\|\mathcal{B}_1(g)\|_{L^p}\leq C\|g\|_{L^p}.
\eeno

{\bf Step 4.} Estimate of $\|\rho\mathcal{B}(g')\|_{L^2}$\medskip

Using the boundary condition of $g$, we get
\begin{align*}
\rho(c)\mathcal{B}(g')=&(u(1)-c)^2(u^{-1})'(u(0))\int_{u(0)}^cg'(z)\frac{\phi_1(u^{-1}(z),c)}{\phi_1(0,c)^2}dz\\
&+(c-u(0))^2(u^{-1})'(u(1))\int_{c}^{u(1)}g'(z)\frac{\phi_1(u^{-1}(z),c)}{\phi_1(1,c)^2}dz\\
=&-(u(1)-c)^2(u^{-1})'(u(0))\int_{u(0)}^cg(z)\frac{\partial_z\phi_1(u^{-1}(z),c)}{\phi_1(0,c)^2}dz\\
&-(c-u(0))^2(u^{-1})'(u(1))\int_{c}^{u(1)}g(z)\frac{\partial_z\phi_1(u^{-1}(z),c)}{\phi_1(1,c)^2}dz\\
&+(u(1)-c)^2(u^{-1})'(u(0))g(c)\frac{1}{\phi_1(0,c)^2}\\
&-(c-u(0))^2(u^{-1})'(u(1))g(c)\frac{1}{\phi_1(1,c)^2}.
\end{align*}
Then by Proposition \ref{prop:phi}, we obtain
\begin{align*}
|\rho(c)\mathcal{B}(g')(c)|\leq & C|g(c)|
+C\int_{0}^{y_c}|g(u(z))|\frac{\al\cosh\al(z-y_c)}{\phi_1(0,c)^2}dz
+C\int_{y_c}^{1}|g(u(z))|\frac{\al\cosh\al(z-y_c)}{\phi_1(1,c)^2}dz.
\end{align*}
with $c=u(y_c)$. Thus, we have
\begin{align*}
\|\rho\mathcal{B}(g')\|_{L^{\infty}}\leq& C\|g\|_{L^{\infty}}+C\|g\|_{L^{\infty}}\sup_{c\in D_0}\Big(\frac{\sinh\al y_c}{\phi_1(0,c)^2}
+\frac{\sinh\al(1-y_c)}{\phi_1(1,c)^2}\Big)\\
\leq& C\|g\|_{L^{\infty}}.
\end{align*}
On the other hand, we have
\begin{align*}
\|\rho(c)\mathcal{B}(g')\|_{L^{1}}\leq& C\|g\|_{L^{1}}
+C\int_0^1\int_{0}^{y_c}|g(z)|\frac{\al\cosh\al(z-y_c)}{\phi_1(0,c)^2}dzdy_c\\
&+C\int_0^1\int_{y_c}^{1}|g(z)|\frac{\al\cosh\al(z-y_c)}{\phi_1(1,c)^2}dzdy_c\\
\leq &C\|g\|_{L^{1}}+C\|g\|_{L^{1}}\int_{0}^{\infty}\frac{z^2\cosh z}{\sinh^2(z)}dz\\
\leq& C\|g\|_{L^{1}}.
\end{align*}
By the interpolation, we get
\beno
\|\rho(c)\mathcal{B}(g')\|_{L^{p}}\leq C\|g\|_{L^{p}}.
\eeno

The proof is completed.
\end{proof}

\begin{lemma}\label{lem:L-H1}
There exists a constant $C$ independent of $\al$ such that for any $p\in (1,\infty)$,
\beno
\|\mathcal{L}_1(g)\|_{L^p}+\|\mathcal{L}_2(g)\|_{L^p}+\|\mathcal{L}_3(g)\|_{L^p}\leq C\|g\|_{L^p}.
\eeno
\end{lemma}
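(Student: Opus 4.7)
The plan is to reduce each of $\mathcal{L}_1,\mathcal{L}_2,\mathcal{L}_3$ to the model operator $\mathbb{T}(f,g,\phi_1(u^{-1},c),1/\phi_1(u^{-1},c)^2)$ handled in Lemma \ref{lem:T-L2}, using Proposition \ref{prop:phi-good} to pay for each application of $(\partial_z+\partial_c)$ by an extra factor $\al^2(z-c)^2$. Since $u\in C^4$ with $u'\geq c_0$, the functions $(u^{-1})',(u^{-1})'',(u^{-1})'''$ and their derivatives are bounded, so the factor $f$ in $\mathbb{T}(f,g,F,G)$ always satisfies $\|f\|_{L^\infty}+\|f'\|_{L^\infty}\le C$. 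The bound for $\mathcal{L}_1$ is then immediate from \eqref{equ: estimate bbT Lp}.

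For $\mathcal{L}_2$, each summand is of the form $\mathbb{T}(f,g,F,G)$ where either $F=(\partial_z+\partial_c)\phi_1(u^{-1},c)$ or $G=(\partial_z+\partial_c)(1/\phi_1(u^{-1},c)^2)$, or $f=(u^{-1})''$. By Proposition \ref{prop:phi-good}, the first inequality there (applied with $z=u(y)$ so that $|z-c|\sim|y-y_c|$) yields
\[
|(\partial_z+\partial_c)\phi_1(u^{-1}(z),c)|\le C\al^2(z-c)^2\phi_1(u^{-1}(z),c),
\]
and, by differentiating $\phi_1^{-2}$, the same type of estimate holds with $\phi_1$ replaced by $\phi_1^{-2}$. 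Plugging these bounds into the definition of $\mathbb{T}$, the factor $(z-c)^2$ cancels the singular kernel $(c-c')^{-2}$. Using the monotonicity $\phi_1(u^{-1}(z),c)\le \phi_1(u^{-1}(c'),c)$ for $|z-c|\le|c'-c|$, one pair $\phi_1/\phi_1^2$ collapses, and the remaining expression is bounded pointwise by
\[
C\al^2\int_{u(0)}^{u(1)}\frac{1}{\phi_1(u^{-1}(c'),c)}\left|\int_c^{c'}g(z)\,dz\right|dc',
\]
which is exactly the quantity $I_1$ estimated in Steps 1 of the proof of Lemma \ref{lem:T-L2}; the $L^p$ bound follows by the same $L^\infty\cap L^1$ interpolation. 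The term involving $(u^{-1})''$ is a direct instance of Lemma \ref{lem:T-L2}.

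For $\mathcal{L}_3$, I would follow the same scheme but invoke the second-order bound from Proposition \ref{prop:phi-good}:
\[
|(\partial_z+\partial_c)^2\phi_1(u^{-1}(z),c)|\le C\al(y-y_c)\sinh\al(y-y_c)+C\al^2(y-y_c)^2\cosh\al(y-y_c).
\]
The first piece satisfies $\al(y-y_c)\sinh\al(y-y_c)\le C\al^2(y-y_c)^2\phi_1$ directly from Proposition \ref{prop:phi}, so it is controlled exactly as in the $\mathcal{L}_2$ analysis. For the second piece one uses the elementary inequality $\cosh z\le C(1+z)\sinh z/z$ valid on $z\geq 0$, which together with Proposition \ref{prop:phi} yields $\cosh\al(y-y_c)\le C(1+\al|y-y_c|)\phi_1(y,c)$. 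After cancelling one $\phi_1$ against the kernel $\phi_1^{-2}$ and using $\al(y'-y_c)/\sinh\al(y'-y_c)$ to absorb the $(1+\al|y'-y_c|)$ factor, the bound again reduces to an $I_1$-type integral. The mixed-derivative terms in $\mathcal{L}_3$ (products of two first-order $(\partial_z+\partial_c)$ derivatives) acquire a double $\al^2(z-c)^2$, which still cancels the one singular kernel and leaves a non-singular integrand, handled identically.

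The main obstacle will be the bookkeeping in the $\mathcal{L}_3$ estimate, specifically ensuring that the combination of the bound $\cosh\al(y-y_c)\le C(1+\al|y-y_c|)\phi_1(y,c)$ with the $\phi_1(u^{-1}(c'),c)^{-2}$ weight in the kernel yields an expression that is uniformly bounded in $\al$ after integration in $c'$; this relies on the exponential decay of $\al(y'-y_c)/\sinh\al(y'-y_c)$ and is morally the same obstruction already resolved in Steps 1--5 of Lemma \ref{lem:T-L2}, so the same $L^\infty$-and-$L^1$ plus Riesz--Thorin interpolation strategy closes the argument.
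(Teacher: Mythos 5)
Your proposal is correct and follows essentially the same route as the paper: $\mathcal{L}_1$ is a direct instance of Lemma \ref{lem:T-L2}, the first-order terms in $\mathcal{L}_2$ are reduced via Proposition \ref{prop:phi-good} and the monotonicity of $\phi_1$ to the $I_1$-type integral $C\al^2\int|\int_c^{c'}|g||\,\phi_1(u^{-1}(c'),c)^{-1}dc'$, and the second-order terms in $\mathcal{L}_3$ are controlled by the $\al^2(z-c)^2\cosh$ bound followed by $L^\infty$--$L^1$ interpolation. The only cosmetic difference is that the paper bounds the resulting kernel directly via $\int_0^\infty z^3\cosh z/\sinh^2 z\,dz<\infty$ rather than converting $\cosh$ into $(1+z)\sinh z/z$ and absorbing the polynomial factor, but both computations close the argument.
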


\begin{proof}
The estimate of $\|\mathcal{L}_1(g)\|_{L^2}$ follows from Lemma \ref{lem:T-L2}. Now we consider $\mathcal{L}_2(g)$.
It suffices to consider the first two terms in the definition of $\mathcal{L}_2(g)$. By Proposition \ref{prop:phi-good},
both terms are bounded by
\begin{align*}
C\al^2\int_{u(0)}^{u(1)}\Big|\int_{c}^{c'}|g(z)|dz\Big|\frac{1}{\phi_1(u^{-1}(c'),c)}dc'.
\end{align*}
With this, the proof is the same as $I_1$ in the proof of Lemma \ref{lem:T-L2}.

Next we consider $\mathcal{L}_3(g)$. It suffice to estimate the following terms
\beno
&&\mathbb{T}\Big((u^{-1})', g,(\partial_z+\partial_c)^2\phi_1(u^{-1},c),\frac{1}{\phi_1(u^{-1},c)^2}\Big)(c)\\
&&\qquad+\mathbb{T}\Big((u^{-1})', g,\phi_1(u^{-1},c),(\partial_z+\partial_c)^2\big(\frac{1}{\phi_1(u^{-1},c)^2}\big)\Big)(c)\triangleq \mathcal{L}_2^1(g).
\eeno
By Proposition \ref{prop:phi-good}, we have
\beno
|(\partial_z+\partial_c)^2\phi_1(u^{-1}(z),c)|\leq C\al^2(u^{-1}(z)-u^{-1}(c))^2\cosh\al(u^{-1}(z)-u^{-1}(c)),
\eeno
and
\beno
\bigg|(\partial_z+\partial_c)\big(\frac{1}{\phi_1^2(u^{-1}(z),c)}\big)\bigg|\leq C\al^2(u^{-1}(z)-u^{-1}(c))^2\frac{\cosh\al(u^{-1}(z)-u^{-1}(c))}{\phi_1^3(u^{-1}(z),c)},
\eeno
which imply
\begin{align*}
|\mathcal{L}_2^1(g)(c)|\leq C\al^2\int_{u(0)}^{u(1)}\Big|\int_{c}^{c'}|g(z)|dz\Big|\frac{\cosh\al(u^{-1}(c')-u^{-1}(c))}{\phi_1^2(u^{-1}(c'),c)}dc'.
\end{align*}
Thus, we obtain
\begin{align*}
\|\mathcal{L}_2^1(g)\|_{L^{\infty}}\leq C\int_0^{\infty}\frac{z^3\cosh z}{\sinh^2 z}dz\|g\|_{L^{\infty}}
\leq C\|g\|_{L^{\infty}},
\end{align*}
and
\begin{align*}
\|\mathcal{L}_2^1(g)\|_{L^1}
\leq &C\al^2\int_0^1|g(y)|\int_0^y\int_y^1\frac{\al^2(z-y_c)^2\cosh \al(z-y_c)}{\sinh^2 \al(z-y_c)}dy_cdzdy\\
&+C\al^2\int_0^1|g(y)|\int_0^y\int_y^1\frac{\al^2(z-y_c)^2\cosh \al(z-y_c)}{\sinh^2 \al(z-y_c)}dzdy_cdy\\
\le& C\|g\|_{L^1},
\end{align*}
where we used $\frac{z^2\cosh z}{\sinh^2 z}\leq \frac{C_0}{e^{z/2}}$ for $z\geq 0$. By the interpolation, we get
\beno
\|\mathcal{L}_2^1(g)\|_{L^p}\leq C\|g\|_{L^{p}}.
\eeno

This completes the proof of the lemma.
\end{proof}

\begin{lemma}\label{lem:L-com}
There exists a constant $C$ independent of $\al$ such that for any $p\in (1,\infty)$,
\beno
\|[\mathcal{L}_1,\rho](g')\|_{L^p}+\|\pa_c[\mathcal{L}_1,\rho](g)\|_{L^p}\leq C\|g\|_{L^p}.
\eeno
\end{lemma}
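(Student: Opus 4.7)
The key observation is the compact commutator representation
\[
[\mathcal{L}_1,\rho](g)(c)=\bbT\Big((u^{-1})',\,(\rho(z)-\rho(c))g(z),\,\phi_1(u^{-1}(z),c),\,\phi_1(u^{-1}(z),c)^{-2}\Big)(c),
\]
together with the factorization $\rho(z)-\rho(c)=(z-c)\sigma(z,c)$ where $\sigma(z,c)=u(0)+u(1)-z-c$. Since $|z-c|\leq|c'-c|$ throughout the inner $z$-integration, the extra factor $(z-c)$ downgrades the Cauchy singularity $(c-c')^{-2}$ to $(c-c')^{-1}$, which is precisely the regularity gain that allows absorbing one derivative of $g$ or one $\partial_c$.

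For the first estimate, I would integrate by parts in $z$ to move the derivative off $g'$:
\[
\int_c^{c'}(z-c)\sigma g'\phi_1\,dz=(c'-c)\sigma(c',c)g(c')\phi_1(u^{-1}(c'),c)-\int_c^{c'}g(z)\,\partial_z\!\big[(z-c)\sigma\phi_1(u^{-1}(z),c)\big]dz,
\]
the $z=c$ boundary contribution vanishing thanks to the $(z-c)$ prefactor. The surviving boundary piece produces a weighted Hilbert-type operator
\[
\int\frac{\sigma(c',c)(u^{-1})'(c')\,g(c')}{(c'-c)\,\phi_1(u^{-1}(c'),c)}\,dc',
\]
which I would treat by writing $1/\phi_1=1-(\phi_1-1)/\phi_1$: the leading piece reduces to a genuine Hilbert transform with smooth multiplier, controlled by Lemma \ref{lem:H-H2}, while the correction has kernel dominated by $C\min(\alpha^2|c'-c|,\,\alpha|c'-c|/\sinh(\alpha|c'-c|))$ via Lemma \ref{lem:phi1-est} and Proposition \ref{prop:phi}, giving a kernel with $\alpha$-uniform $L^1$-norm. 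The interior integral expands into finitely many $\bbT$-pieces whose kernels are less singular than that of $\mathcal{L}_1$ (by the extra $(z-c)$) but involve $\phi_1$ and $\partial_y\phi_1$; each is estimated in $L^p$ by the same near/far-diagonal analysis used in the proof of Lemma \ref{lem:T-L2}.

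For $\|\partial_c[\mathcal{L}_1,\rho](g)\|_{L^p}$, I would apply Lemma \ref{lem:iden-L} once to $g$ and once to $\rho g$, combine to obtain $\rho\,\partial_c(\mathcal{L}_1(\rho g)-\rho\mathcal{L}_1(g))$, and divide by $\rho$ (justified by continuity) to reach
\[
\partial_c[\mathcal{L}_1,\rho](g)=[\mathcal{L}_1,\rho](g')+[\mathcal{L}_1,\rho'](g)+[\mathcal{L}_2,\rho](g)+\rho^{-1}\mathcal{B}(\rho g)-\mathcal{B}(g).
\]
The two $\mathcal{B}$-terms are controlled by Lemma \ref{lem:B-H1}, and $[\mathcal{L}_1,\rho](g')$ is the first estimate. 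The remaining commutators $[\mathcal{L}_1,\rho'](g)$ (with $\rho'(z)-\rho'(c)=-2(z-c)$) and $[\mathcal{L}_2,\rho](g)$ involve no derivative of $g$, so no integration by parts is needed; using the $(z-c)$-cancellation and the derivative bounds of Propositions \ref{prop:phi} and \ref{prop:phi-good}, each is dominated pointwise by $C\int\frac{1}{|c'-c|\,\phi_1(u^{-1}(c'),c)}\int_c^{c'}|g|dz\,dc'$, which splits at $|c'-c|=1/\alpha$: the near piece is $O(\|g\|_{L^p}/\alpha)$ by the Hardy-Littlewood maximal inequality, while the far piece carries the exponentially decaying weight $\alpha|c'-c|/\sinh(\alpha|c'-c|)$ whose integral is $O(1)$.

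The main obstacle is the weighted Hilbert-type boundary operator from Step 2: its kernel depends on $c$ through $\phi_1(u^{-1}(c'),c)$, so $L^p$-boundedness is not a direct consequence of classical singular integral theory. The $\alpha$-uniform bound crucially relies on matching the $\alpha^2|c'-c|^2$ expansion of $1-1/\phi_1$ near the diagonal (Lemma \ref{lem:phi1-est}) against the exponential growth of $\phi_1$ away from it (Proposition \ref{prop:phi}), so that the $\alpha^2$ factors precisely cancel and one is left with an $L^1$-kernel independent of $\alpha$.
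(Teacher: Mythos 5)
Your overall route coincides with the paper's: the identity for $\pa_c[\mathcal{L}_1,\rho](g)$ derived from Lemma \ref{lem:iden-L}, the reduction to $[\mathcal{L}_1,\rho](g')$, the integration by parts in $z$ based on $\rho(z)-\rho(c)=(z-c)\big(u(0)+u(1)-z-c\big)$, and the resulting split into a weighted Hilbert-type boundary term plus less singular interior pieces are exactly the steps taken in the paper. (The commutator structure you invoke for $[\mathcal{L}_1,\rho'](g)$ and $[\mathcal{L}_2,\rho](g)$ is not needed: since no derivative falls on $g$ there, each term is bounded individually by Lemmas \ref{lem:L-H1} and \ref{lem:B-H1}.)

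There is, however, a genuine error in your treatment of the boundary term. After writing $\frac{1}{\phi_1}=1-\frac{\phi_1-1}{\phi_1}$ globally, you claim the correction kernel $\frac{1}{|c'-c|}\cdot\frac{\phi_1-1}{\phi_1}$ is dominated by $C\min\big(\al^2|c'-c|,\ \al|c'-c|/\sinh(\al|c'-c|)\big)$ and hence has $\al$-uniform $L^1$ norm. This fails away from the diagonal: for $\al|c'-c|\gtrsim 1$ one has $\frac{\phi_1-1}{\phi_1}=1-\frac{1}{\phi_1}\geq c_0>0$ (the exponential growth of $\phi_1$ makes $1/\phi_1$ small and therefore pushes $1-1/\phi_1$ \emph{towards} $1$, not towards $0$), so the kernel behaves like $|c'-c|^{-1}$ there and $\int_{1/\al}^{1}z^{-1}dz\sim\ln\al$. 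The correction operator is thus not given by an $\al$-uniformly $L^1$ kernel; it contains the truncated Hilbert transform $\int_{|c'-c|>1/\al}\frac{g(c')}{c'-c}dc'$. The paper circumvents this by using $\frac{1}{\phi_1}=1-\frac{\phi_1-1}{\phi_1}$ only on $\{|c'-c|\leq 1/\al\}$, where Lemma \ref{lem:phi1-est} gives the kernel $\al^2|c'-c|$; by keeping $\frac{1}{\phi_1}$ itself on $\{|c'-c|>1/\al\}$, where it is exponentially small and the kernel $K_1(z)=\al\,\frac{\al z}{\sinh\al z}\chi_{\{\al z\geq c_0\}}$ is uniformly in $L^1$; and by controlling the leftover truncated Hilbert transform through the maximal Hilbert transform $H^{*}$ (cf.\ Lemma \ref{lem:sing2}). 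You need this additional ingredient; with it, the remainder of your argument goes through.
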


\begin{proof}We have
\begin{align*}
\partial_c[\mathcal{L}_1,\rho]g
=&\mathcal{L}_1((\rho g)')+\mathcal{L}_2(\rho g)-\frac{1}{\rho}\mathcal{B}(\rho g)\\
&-\rho'\mathcal{L}_1(g)-\rho\mathcal{L}_1(g')-\rho\mathcal{L}_2(g)+\mathcal{B}(g)\\
=&[\mathcal{L}_1,\rho](g')+\mathcal{L}_1(\rho'g)+\mathcal{L}_2(\rho g)-\frac{1}{\rho}\mathcal{B}(\rho g)\\
&-\rho'\mathcal{L}_1(g)-\rho\mathcal{L}_2(g)+\mathcal{B}(g).
\end{align*}
Thus, by Lemma \ref{lem:B-H1} and Lemma \ref{lem:L-H1}, it suffices to consider $[\mathcal{L}_1,\rho](g')$.

Since
\begin{align*}
[\mathcal{L}_1,\rho](g')(c)=&
\int_{u(0)}^{u(1)}\frac{\int_c^{c'}(\rho(z)-\rho(c))g'(z)\phi_1(u^{-1}(z),c)dz}{(c'-c)^2}\frac{(u^{-1})'(c')}{\phi_1(u^{-1}(c'),c)^2}dc'\\
=&p.v.\int_{u(0)}^{u(1)}\frac{g(c')}{c'-c}\frac{(u(0)+u(1)-c'-c)(u^{-1})'(c')}{\phi_1(u^{-1}(c'),c)}dc'\\
&-\int_{u(0)}^{u(1)}\frac{\int_c^{c'}(\rho(z)-\rho(c))g(z)\partial_z\phi_1(u^{-1}(z),c)dz}{(c'-c)^2}\frac{(u^{-1})'(c')}{\phi_1(u^{-1}(c'),c)^2}dc'
-\mathcal{L}_1(\rho'g)\\
\triangleq&\Theta_1+\Theta_2-\mathcal{L}_1(\rho'g).
\end{align*}
Let $f(c',c)=(u(0)+u(1)-c'-c)(u^{-1})'(c')$. $\Theta_1$ can be rewritten as
\begin{align*}
&\textrm{p.v.}\int_{u(0)}^{u(1)}\frac{g(c')}{c'-c}\frac{(u(0)+u(1)-c'-c)(u^{-1})'(c')}{\phi_1(u^{-1}(c'),c)}dc'\\
&=(u(0)+u(1)-2c)(u^{-1})'(c)\textrm{p.v.}\int_{u(0)}^{u(1)}\frac{g(c')}{c'-c}dc'\\
&\quad-(u(0)+u(1)-2c)(u^{-1})'(c)\textrm{p.v.}\int_{D_0\cap \{|c'-c|>\frac{1}{\al}\}}\frac{g(c')}{c'-c}dc'\\
&\quad+\int_{u(0)}^{u(1)}\frac{g(c')}{c'-c}\frac{f(c',c)-f(c,c)}{\phi_1(u^{-1}(c'),c)}dc'\\
&\quad+\int_{D_0\cap \{|c'-c|>\frac{1}{\al}\}}\frac{g(c')}{c'-c}\frac{f(c,c)}{\phi_1(u^{-1}(c'),c)}dc'\\
&\quad-f(c,c)\int_{D_0\cap \{|c'-c|\leq\frac{1}{\al}\}}\frac{g(c')}{c'-c}\frac{\phi_1(u^{-1}(c'),c)-1}{\phi_1(u^{-1}(c'),c)}dc'.
\end{align*}
The $L^p$ boundedness of the first two terms follows from $L^p$ boundedness of Hilbert transform. The third term is trivial
since $|f(c',c)-f(c,c)|\leq C|c'-c|$.
The fourth term is bounded by
\begin{align*}
C\int_{[0,1]\cap \{|x-x_0|>\frac{c_0}{\al}\}}|g(u(y))|\frac{\al^2|y-y_c|}{\sinh\al(y-y_c)}dy.
\end{align*}
Thanks to $K_1(z)=\al\frac{\al z}{\sinh \al z}\chi_{[c_0,+\infty]}(\al z)\in L^1$, we get
\begin{align*}
\bigg\|\int_{D_0\cap \{|c'-c|>\frac{1}{\al}\}}\frac{g(c')}{c'-c}\frac{f(c,c)}{\phi_1(u^{-1}(c'),c)}dc'\bigg\|_{L^p}
\leq &C\||g\circ u|\ast K_1\|_{L^p}\\
\leq &C\|g\|_{L^p}.
\end{align*}
The last term  is bounded by
\begin{align*}
C\int_{D_0\cap \{|c'-c|\leq\frac{1}{\al}\}}g(c')\al^2|c'-c|dc'
\end{align*}
Thanks to $K_2(z)=\al^2 z\chi_{[0,1]}(\al z)\in L^1$, we get
\begin{align*}
\bigg\|\int_{D_0\cap \{|c'-c|\leq\frac{1}{\al}\}}\frac{g(c')}{c'-c}\frac{\phi_1(u^{-1}(c'),c)-1}{\phi_1(u^{-1}(c'),c)}dc'\bigg\|_{L^p}
\leq &C_0\||g|\ast K_2\|_{L^p}\\
\leq &C_0\|g\|_{L^p}.
\end{align*}
This shows that
\beno
\|\Theta_1\|_{L^p}\leq C\|g\|_{L^p}.
\eeno

For $\Theta_2$ term, we have
\begin{align*}
|\Theta_2(c)|\leq
C\al\int_0^1\frac{\int_{y_c}^y|g(u(z))|dz}{y-y_c}\frac{\al(y-y_c)}{\sinh\al(y-y_c)}dy,
\end{align*}
from which, it follows that
\begin{align*}
\|\Theta_2(c)\|_{L^{\infty}}\leq &
C\al\int_0^1\frac{\int_{y_c}^y\|g\|_{L^{\infty}}dz}{y-y_c}\frac{\al(y-y_c)}{\sinh\al(y-y_c)}dy\\
\leq &C\al\|g\|_{L^{\infty}}\int_0^1\frac{\al(y-y_c)}{\sinh\al(y-y_c)}dy\\
\leq &C\|g\|_{L^{\infty}}\big\|\frac{z}{\sinh z}\big\|_{L^1}
\leq C\|g\|_{L^{\infty}},
\end{align*}
and
\begin{align*}
\|\Theta_2(c)\|_{L^1}\leq &
C\int_0^1\al\int_0^1\frac{\int_{y_c}^y|g(u(z))|dz}{y-y_c}\frac{\al(y-y_c)}{\sinh\al(y-y_c)}dydy_c\\
\leq &C\int_0^1|g(u(z))|dz
\Big\|\int_0^z\int_z^1\frac{\al^2}{\sinh \al(y_c-y)}dy_cdy\Big\|_{L^{\infty}}\leq C\|g\|_{L^1}.
\end{align*}
By the interpolation, we get
\beno
\|\Theta_2(c)\|_{L^p}\leq C\|g\|_{L^p}.
\eeno
Thus, we obtain
\beno
\|[\mathcal{L}_1,\rho](g')\|_{L^p}\le \|\Theta_1(c)\|_{L^p}+\|\Theta_2(c)\|_{L^p}+\|\mathcal{L}_1(\rho'g)\|_{L^p}\le C\|g\|_{L^p}.
\eeno

This completes the proof of the lemma.
\end{proof}

\section*{Acknowledgement}
Z. Zhang thanks Professor Zhiwu Lin for introducing this question to him and many profitable discussions.
This work was done when W. Zhao was visiting School of Mathematical Science, Peking University. He appreciates the hospitality from PKU.
Z. Zhang is partially supported by NSF of China under Grant 11371037 and 11425103. W. Zhao is partially supported by NSF of China under Grant 11571306.


\begin{thebibliography}{99}

\bibitem{Arnold} V. I. Arnold, {\it On an apriori estimate in the theory of hydrodynamical stability},
Amer. Math. Soc. Transl., 79(1969), 267-269.

\bibitem{Bar} C. Bardos, Y. Guo and W. Strauss, {\it Stable and unstable ideal plane flows}, Chinese Ann. Math. Ser. B,
23(2002), 149-164.

\bibitem{Bed1} J. Bedrossian and N. Masmoudi, {\it Inviscid damping and the asymptotic stability of planar shear flows in the 2D Euler equations}, arXiv:1306.5028.

\bibitem{Bed2} J. Bedrossian, N. Masmoudi and C. Mouhot,
{\it Landau damping: paraproducts and Gevrey regularity}, arXiv:1311.2870.

\bibitem{Bou} F. Bouchet and H. Morita, {\it Large time behavior and asymptotic stability of the 2D Euler and linearized
Euler equations}, Physica D, 239(2010), 948-966.

\bibitem{Che} J.-Y. Chemin, {\it Perfect Incompressible Fluids}, Oxford Lecture Ser. Math. Appl.
14, The Clarendon Press, Oxford University Press, New York, 1998.

\bibitem{Case} K. M. Case, {\it Stability of inviscid plane Couette flow}, Phys. Fluids, 3(1960), 143-148.

\bibitem{Den} S. A. Denisov, {\it Infinite superlinear growth of the gradient for the twodimensional
Euler equation}, Discrete Contin. Dyn. Syst.,  23(2009), 755-764.

\bibitem{Dra} P. G. Drazin and W. H. Reid, {\it Hydrodynamics stability}, Cambridge Monographs on Mech. and Applied Math.,
Cambridge University Press, Cambridge, 1981.

%\bibitem{Gla} R. Glassey and J. Schaeffer, {\it On time decay rates in landau damping}, Comm. Partial
%Differential Equations, 20(1995), 647-676.

\bibitem{Gre} E. Grenier, {\it On the nonlinear instability of Euler and Prandtl equations},
Comm. Pure Appl. Math.,  53(2000), 1067-1091.

\bibitem{Kis} A. Kiselev and V. Sver\'{a}k, {\it Small scale creation for solutions of the incompressible two-dimensional Euler equation},
Ann. Math., 180(2014), 1205-1220.

\bibitem{Lan} L. Landau, {\it On the vibration of the electronic plasma},  J. Phys. USSR, 10(1946), 25.

\bibitem{Lin-SIAM} Z. Lin, {\it Instabilty of some ideal plane flows}, SIAM J. Math. Anal.,
35(2003), 318-356.

\bibitem{Lin-IN} Z. Lin, {\it Nonlinear instability of ideal plane flows}, Int. Math. Res. Not.,  41(2004), 2147-2178.

\bibitem{LZ-CMP} Z. Lin and C. Zeng, {\it Small BGK waves and nonlinear Landau damping}, Comm. Math. Phys., 306(2011), 291-331.

\bibitem{LZ} Z. Lin and C. Zeng, {\it Inviscid dynamic structures near Couette flow}, Arch. Rat. Mech. Anal., 200(2011), 1075-1097.

\bibitem{Maj} A. J. Majda and A. L. Bertozzi, {\it Vorticity and Incompressible Flow,}
Cambridge Texts Appl. Math. 27, Cambridge University Press, Cambridge, 2002.

\bibitem{Mou} C. Mouhot and C. Villani, {\it On Landau damping}, Acta Math., 207(2011), 29-201.

\bibitem{Orr} W. Orr, {\it Stability and instability of steady motions of a perfect liquid},
Proc. Ir. Acad. Sect. A: Math Astron. Phys. Sci., 27(1907), 9-66.

\bibitem{Ray} L. Rayleigh, {\it On the stability or instability of certain fluid motions}, Proc. London Math.
Soc., 9(1880), 57-70.

\bibitem{Ros} S. I. Rosencrans and D. H. Sattinger, {\it On the spectrum of an operator occurring in the theory of Hydrodynamics stability},
J. Math. Phys., 45(1966), 289-300.

\bibitem{Sch} P. J. Schmid and D. S. Henningson, {\it Stability and transition in shear flows}, Applied
Mathematical Sciences Vol. 142,  Springer-Verlag, New York, 2001.

\bibitem{Ste} S. A. Stepin, {\it Nonself-adjoint Friedrichs models in Hydrodynamics stability}, Functional Analysis and its Applications, 29(1995),
91-101.

\bibitem{Vish} M. M. Vishik and S. Friedlander, {\it Nonlinear instability in two dimensional ideal fluids: the case of a dominant eigenvalue},
 Comm. Math. Phys., 243(2003), 261-273.

\bibitem{Zill}  C. Zillinger, {\it Linear inviscid damping for monotone shear flows}, arXiv:1410.7341.

\bibitem{Zill2} C. Zillinger, {\it Linear inviscid damping for monotone shear flows in a finite periodic channel, boundary effects,
blow-up and critical Sobolev regularity}, arXiv:1506.04010.





\end{thebibliography}
\end{document}